\documentclass{article}
\usepackage[T1]{fontenc}
\usepackage[english]{babel}
\usepackage{calc}
\usepackage{mathrsfs}
\usepackage{amsmath}
\usepackage{amssymb}
\usepackage{amsthm}
\usepackage{array}
\usepackage[labelformat=simple]{subcaption}
\usepackage{float}
\usepackage{tabularx}
\usepackage{booktabs}

\DeclareMathOperator{\Endom}{End}

\DeclareMathOperator{\mo}{mod}

\DeclareMathOperator{\indes}{Ind}

\newcommand\Iso{\xrightarrow{\sim}}

\newcommand\Endo[1][U]{\Endom #1}

\newtheorem{theo}{Theorem}
\newtheorem{prop}[theo]{Proposition}
\newtheorem{coro}[theo]{Corollary}
\newtheorem{lema}[theo]{Lemma}
\theoremstyle{definition}
\newtheorem{defi}[theo]{Definition}
\theoremstyle{remark}
\newtheorem{rema}{Remark}
\newtheorem*{exam}{Example}
\usepackage{mathtools}
\usepackage{placeins}
\usepackage{xkeyval}
\usepackage{caption}

\DeclareCaptionLabelFormat{continued}{#1~#2 (Continued)}
\usepackage{array}
\usepackage{csquotes}
\usepackage{multicol}
\usepackage{enumitem}
\usepackage{tikz-cd}
\usepackage{biblatex}

\usetikzlibrary{
  calc,
  positioning, 
  matrix,
  arrows.meta,
  decorations.pathreplacing
}

\tikzset{
  copo/.style={draw,circle,inner sep=1.5pt},
  texto/.style={draw=none,inner sep=2pt},
  mimatriz/.style={
    anchor=base,
    align=center,
    text depth=0.8ex,
    text height=2.2ex,
    text width=2em,
  matrix of math nodes,
  nodes in empty cells
  },
  mimat/.style={
    matrix of math nodes,
    ampersand replacement=\&,
    text width=3.4em,
    text height=1.3em,
    text depth=0.6em,
    align=center
  },
  corto/.style={shorten >= 3pt,shorten <= 3pt}  
}

\setlist[enumerate,1]{label={\normalfont (\alph*)},ref={\normalfont (\alph*)}}

\newcommand\textdef[1]{\textit{#1}}

\newcommand\func[1]{\mathfrak{#1}}

\newcommand\obj{\mathcal{O}}

\newcommand\twomat[2]{\begin{bmatrix}#1\\#2\end{bmatrix}}

\DeclareMathOperator{\End}{End}
\DeclareMathOperator{\Hom}{Hom}
\DeclareMathOperator{\Dom}{Dom}
\DeclareMathOperator{\Ker}{Ker}
\DeclareMathOperator{\Ima}{Im}

\DeclareMathOperator{\Lrel}{lrel}
\DeclareMathOperator{\Rep}{rep}
\newcommand\lrel{\Lrel(k)}
\newcommand\lrelo{\Lrel_{1}(k)}
\newcommand\lrelt{\Lrel_{2}(k)}
\newcommand\lrelto{\Lrel^{2}_{1}(k)}
\newcommand\lreltt{\Lrel^{2}_{2}(k)}
\newcommand\repTk{\Rep(\carc{T},k)}

\newcommand\repFk{\Rep(\carc{F},k)}

\newcommand\repSk{\Rep(\mathcal{S},k)}

\newcommand\repDk{\Rep(\mathcal{D},k)}

\newcommand\repKk{\Rep(\mathcal{K},k)}

\newcommand\repCk{\Rep(\mathcal{C},k)}

\DeclareMathOperator{\Ind}{Ind}

\newcommand\IndF{\Ind\carc{F}}

\newcommand\IndS{\Ind\carc{S}}

\newcommand\carc[1]{\mathcal{#1}}

\newcommand\hormatrixl[7][]{%
\begin{tikzpicture}[#1]
\matrix[mimat,inner sep=0pt] (#7)
{
#2 \& #3 \\
};
\draw (#7-1-1.north west) -- (#7-1-1.south west);
\draw (#7-1-1.north east) -- (#7-1-1.south east);
\draw (#7-1-2.north east) -- (#7-1-2.south east);
\draw (#7-1-1.north west) -- (#7-1-2.north east);
\draw (#7-1-1.south west) -- (#7-1-2.south east);

\node[anchor=north,font=\footnotesize] at ([yshift=-0.5ex]#7.south) {$n\geq #4$};
\node[anchor=south,font=\footnotesize] at ([yshift=0.5ex]#7.north) {$d=#6$};
\end{tikzpicture}%
}

\newcommand\lrz{\scalebox{0.7}[0.8]{%
  $\begin{array}{c}0\,0\ldots 0\end{array}$}}
\newcommand\lroz{\scalebox{0.7}[0.8]{%
  $\begin{array}{c}1\,0\ldots 0\end{array}$}}

\newcommand\tttmatrixsimple[8][]{%
\begin{tikzpicture}[#1]
\matrix[mimat,inner sep=0pt,text width=3em] (mim)
{
#2 \& #3 \\
#4 \& #5 \\
};
\draw (mim-1-1.north west) -- (mim-2-1.south west);
\draw (mim-1-1.north east) -- (mim-2-1.south east);
\draw (mim-1-2.north east) -- (mim-2-2.south east);
\draw (mim-1-1.north west) -- (mim-1-2.north east);
\draw (mim-1-1.south west) -- (mim-1-2.south east);
\draw (mim-2-1.south west) -- (mim-2-2.south east);

\node[anchor=north,font=\footnotesize] 
  at ([yshift=-0.5ex]mim.south) 
  {$n\geq #6$};
\end{tikzpicture}%
}

\makeatletter
\define@cmdkey[PRE]{fam}{matname}{}
\define@cmdkey[PRE]{fam}{i}{}
\define@cmdkey[PRE]{fam}{ii}{}
\define@cmdkey[PRE]{fam}{iii}{}
\define@cmdkey[PRE]{fam}{iv}{}
\define@cmdkey[PRE]{fam}{v}{}
\define@cmdkey[PRE]{fam}{vi}{}
\define@cmdkey[PRE]{fam}{vii}{}
\define@cmdkey[PRE]{fam}{viii}{}
\define@cmdkey[PRE]{fam}{ix}{}
\define@cmdkey[PRE]{fam}{x}{}
\define@cmdkey[PRE]{fam}{xi}{}
\define@cmdkey[PRE]{fam}{xii}{}
\presetkeys[PRE]{fam}{
  matname=mimat,
  i=0,
  ii=0,
  iii=0,
  iv=0,
  v=0,
  vi=0,
  vii=0,
  viii=0,
  ix=0,
  x=0,
  xi=0,
  xii=0}{}
\newcommand\tetradmat[3]{
  \setkeys[PRE]{fam}{#1}
  \begin{tikzpicture}[
    ampersand replacement=\&,
    baseline=(current bounding box.center),
    #2
    ]
  \matrix[mimatriz,#3]
    (\cmdPRE@fam@matname)
    {
    \cmdPRE@fam@i 
      \& \cmdPRE@fam@iii
      \& \cmdPRE@fam@v 
      \& \cmdPRE@fam@vii \\
    \cmdPRE@fam@ii 
      \& \cmdPRE@fam@iv
      \& \cmdPRE@fam@vi 
      \& \cmdPRE@fam@viii \\
    };
  \draw 
    (\cmdPRE@fam@matname-1-1.north west) -- 
    (\cmdPRE@fam@matname-1-4.north east);  
  \draw 
    (\cmdPRE@fam@matname-1-1.south west) -- 
    (\cmdPRE@fam@matname-1-4.south east);  
  \draw 
    (\cmdPRE@fam@matname-2-1.south west) -- 
    (\cmdPRE@fam@matname-2-4.south east);  
  \foreach \Col in {1,2,3,4}
  {
  \draw 
    (\cmdPRE@fam@matname-1-\Col.north west) -- 
    (\cmdPRE@fam@matname-2-\Col.south west);
  }    
  \draw 
    (\cmdPRE@fam@matname-1-4.north east) -- 
    (\cmdPRE@fam@matname-2-4.south east);
  \end{tikzpicture}
}
\newcommand\tetradmatinfo[6]{
  \setkeys[PRE]{fam}{#1}
  \begin{tikzpicture}[
    ampersand replacement=\&,
    baseline=(current bounding box.center),
    #2
    ]
  \matrix[mimatriz,#3]
    (\cmdPRE@fam@matname)
    {
    \cmdPRE@fam@i 
      \& \cmdPRE@fam@iii
      \& \cmdPRE@fam@v 
      \& \cmdPRE@fam@vii \\
    \cmdPRE@fam@ii 
      \& \cmdPRE@fam@iv
      \& \cmdPRE@fam@vi 
      \& \cmdPRE@fam@viii \\
    };
  \draw 
    (\cmdPRE@fam@matname-1-1.north west) -- 
    (\cmdPRE@fam@matname-1-4.north east);  
  \draw 
    (\cmdPRE@fam@matname-1-1.south west) -- 
    (\cmdPRE@fam@matname-1-4.south east);  
  \draw 
    (\cmdPRE@fam@matname-2-1.south west) -- 
    (\cmdPRE@fam@matname-2-4.south east);  
  \foreach \Col in {1,2,3,4}
  {
  \draw 
    (\cmdPRE@fam@matname-1-\Col.north west) -- 
    (\cmdPRE@fam@matname-2-\Col.south west);
  }    
  \draw 
    (\cmdPRE@fam@matname-1-4.north east) -- 
    (\cmdPRE@fam@matname-2-4.south east);
  \node[anchor=north,font=\footnotesize] 
    at ([yshift=-0.5ex]\cmdPRE@fam@matname.south) 
    {$n\geq #4$};
  \end{tikzpicture}
}
\newcommand\tetradmatinfov[6]{
  \setkeys[PRE]{fam}{#1}
  \begin{tikzpicture}[
    ampersand replacement=\&,
    baseline=(current bounding box.center),
    #2
    ]
  \matrix[mimatriz,#3]
    (\cmdPRE@fam@matname)
    {
    \cmdPRE@fam@i 
      \& \cmdPRE@fam@iv
      \& \cmdPRE@fam@vii 
      \& \cmdPRE@fam@x \\
    \cmdPRE@fam@ii 
      \& \cmdPRE@fam@v
      \& \cmdPRE@fam@viii 
      \& \cmdPRE@fam@xi \\
    |[text height=1.4ex,text depth=0ex]|\cmdPRE@fam@iii 
      \& |[text height=1.4ex,text depth=0ex]|\cmdPRE@fam@vi
      \& |[text height=1.4ex,text depth=0ex]|\cmdPRE@fam@ix 
      \& |[text height=1.4ex,text depth=0ex]|\cmdPRE@fam@xii \\
    };
  \draw 
    (\cmdPRE@fam@matname-1-1.north west) -- 
    (\cmdPRE@fam@matname-1-4.north east);  
  \draw 
    (\cmdPRE@fam@matname-1-1.south west) -- 
    (\cmdPRE@fam@matname-1-4.south east);  
  \draw 
    (\cmdPRE@fam@matname-2-1.south west) -- 
    (\cmdPRE@fam@matname-2-4.south east);  
  \draw 
    (\cmdPRE@fam@matname-3-1.south west) -- 
    (\cmdPRE@fam@matname-3-4.south east);  
  \foreach \Col in {1,2,3,4}
  {
  \draw 
    (\cmdPRE@fam@matname-1-\Col.north west) -- 
    (\cmdPRE@fam@matname-3-\Col.south west);
  }    
  \draw 
    (\cmdPRE@fam@matname-1-4.north east) -- 
    (\cmdPRE@fam@matname-3-4.south east);
  \node[anchor=north,font=\footnotesize] 
    at ([yshift=-0.5ex]\cmdPRE@fam@matname.south) 
    {$n\geq #4$};
  \end{tikzpicture}
}

\newcommand\tetradmattworel[3]{
  \setkeys[PRE]{fam}{#1}
  \begin{tikzpicture}[
    ampersand replacement=\&,
    baseline=(current bounding box.center),
    #2
    ]
  \matrix[mimatriz,#3]
    (\cmdPRE@fam@matname)
    {
    \cmdPRE@fam@i 
      \& \cmdPRE@fam@iii
      \& \cmdPRE@fam@v 
      \& \cmdPRE@fam@vii \\
    \cmdPRE@fam@ii 
      \& \cmdPRE@fam@iv
      \& \cmdPRE@fam@vi 
      \& \cmdPRE@fam@viii \\
    };
  \draw 
    (\cmdPRE@fam@matname-1-1.north west) -- 
    (\cmdPRE@fam@matname-1-4.north east);  
  \draw 
    (\cmdPRE@fam@matname-1-1.south west) -- 
    (\cmdPRE@fam@matname-1-2.south east);  
  \draw 
    (\cmdPRE@fam@matname-2-1.south west) -- 
    (\cmdPRE@fam@matname-2-4.south east);  
  \foreach \Col in {1,2,3,4}
  {
  \draw 
    (\cmdPRE@fam@matname-1-\Col.north west) -- 
    (\cmdPRE@fam@matname-2-\Col.south west);
  }    
  \draw 
    (\cmdPRE@fam@matname-1-4.north east) -- 
    (\cmdPRE@fam@matname-2-4.south east);
  \end{tikzpicture}
}

\newcommand\tetradmatonerel[3]{
  \setkeys[PRE]{fam}{#1}
  \begin{tikzpicture}[
    ampersand replacement=\&,
    baseline=(current bounding box.center),
    #2
    ]
  \matrix[mimatriz,#3]
    (\cmdPRE@fam@matname)
    {
    \cmdPRE@fam@i 
      \& \cmdPRE@fam@iii
      \& \cmdPRE@fam@v 
      \& \cmdPRE@fam@vii \\
    \cmdPRE@fam@ii 
      \& \cmdPRE@fam@iv
      \& \cmdPRE@fam@vi 
      \& \cmdPRE@fam@viii \\
    };
  \draw 
    (\cmdPRE@fam@matname-1-1.north west) -- 
    (\cmdPRE@fam@matname-1-4.north east);  
  \draw 
    (\cmdPRE@fam@matname-1-1.south west) -- 
    (\cmdPRE@fam@matname-1-3.south east);  
  \draw 
    (\cmdPRE@fam@matname-2-1.south west) -- 
    (\cmdPRE@fam@matname-2-4.south east);  
  \foreach \Col in {1,2,3,4}
  {
  \draw 
    (\cmdPRE@fam@matname-1-\Col.north west) -- 
    (\cmdPRE@fam@matname-2-\Col.south west);
  }    
  \draw 
    (\cmdPRE@fam@matname-1-4.north east) -- 
    (\cmdPRE@fam@matname-2-4.south east);
  \end{tikzpicture}
}
\makeatother

\newcommand\tetradmatnoline[1]{%
  \begin{tikzpicture}[
    ampersand replacement=\&,
    baseline=(current bounding box.center)
    ]
  \matrix[mimatriz]
    (#1)
    {
      \& \& \& \\
      \& \& \& \\
    };
  \draw 
    (#1-1-1.north west) -- 
    (#1-1-4.north east);  
  \draw 
    (#1-2-1.south west) -- 
    (#1-2-4.south east);  
  \foreach \Col in {1,2,3,4}
  {
  \draw 
    (#1-1-\Col.north west) -- 
    (#1-2-\Col.south west);
  }    
  \draw 
    (#1-1-4.north east) -- 
    (#1-2-4.south east);
  \node[anchor=center] at (#1-1-1.south) {$A$};  
  \node[anchor=center] at (#1-1-2.south) {$B$};  
  \node[anchor=center] at (#1-1-3.south) {$C$};  
  \node[anchor=center] at (#1-1-4.south) {$D$};  
  \end{tikzpicture}%
}

\newcommand\tripmatrix[8][]{%
\begin{tikzpicture}[#1]
\matrix[mimat,inner sep=0pt] (#6)
{
#2 \& #3 \\
#4 \& \\
};
\draw (#6-1-1.north west) -- (#6-2-1.south west);
\draw (#6-1-1.north east) -- (#6-2-1.south east);
\draw (#6-1-2.north east) -- (#6-1-2.south east);
\draw (#6-1-1.north west) -- (#6-1-2.north east);
\draw (#6-1-1.south west) -- (#6-1-2.south east);
\draw (#6-2-1.south west) -- (#6-2-1.south east);

\node[anchor=north,font=\footnotesize] at ([yshift=-0.5ex]#6.south) {$n\geq #5$};
\end{tikzpicture}%
}

\newcommand\restr[2]{{% we make the whole thing an ordinary symbol
  \left.\kern-\nulldelimiterspace % automatically resize the bar with \right
  #1 % the function
  \vphantom{\big|} 
  \right|_{#2} % this is the delimiter
  }}

\newcommand\diamatrix[7][]{%
\begin{tikzpicture}[#1]
\matrix[mimat,nodes in empty cells,inner sep=0pt] (#7)
{
\& #2 \\
#3 \& \\
};
\draw (#7-1-1.south west) -- (#7-2-1.south west);
\draw (#7-1-1.north east) -- (#7-2-1.south east);
\draw (#7-1-2.north east) -- (#7-1-2.south east);

\draw (#7-1-2.north west) -- (#7-1-2.north east);
\draw (#7-1-1.south west) -- (#7-1-2.south east);
\draw (#7-2-1.south west) -- (#7-2-1.south east);

\node[anchor=north,font=\footnotesize] at ([yshift=-0.5ex]#7.south) {$n\geq #4$};
\end{tikzpicture}%
}

\newcommand\vermatrix[5][]{%
\begin{tikzpicture}[#1]
\matrix[mimat,inner sep=0pt] (#5)
{
#2 \\
#3 \\
};
\draw (#5-1-1.north west) -- (#5-2-1.south west);
\draw (#5-1-1.north east) -- (#5-2-1.south east);
\draw (#5-1-1.north west) -- (#5-1-1.north east);
\draw (#5-2-1.south west) -- (#5-2-1.south east);

\node[anchor=north,font=\footnotesize] at ([yshift=-0.5ex]#5.south) {$n\geq #4$};
\end{tikzpicture}%
}

\newcommand\tvermatrix[7][]{%
\begin{tikzpicture}[#1]
\matrix[mimat,inner sep=0pt] (#7)
{
#2 \& #3 \\
#4 \& #5 \\
};
\draw (#7-1-1.north west) -- (#7-2-1.south west);
\draw (#7-1-1.north east) -- (#7-2-1.south east);
\draw (#7-1-2.north east) -- (#7-2-2.south east);
\draw (#7-1-1.north west) -- (#7-1-2.north east);
\draw (#7-2-1.south west) -- (#7-2-2.south east);

\node[anchor=north,font=\footnotesize] at ([yshift=-0.5ex]#7.south) {$n\geq #6$};
\end{tikzpicture}%
}

\newcommand\Smatrix[5][]{%
\begin{tikzpicture}[#1]
\matrix[mimat,inner sep=0pt,text width=3em] (mim)
{
#2 \& #3 \\
#4 \& #5 \\
};
\draw (mim-1-1.north west) -- (mim-2-1.south west);
\draw (mim-1-1.north east) -- (mim-2-1.south east);
\draw (mim-1-2.north east) -- (mim-2-2.south east);
\draw (mim-1-1.north west) -- (mim-1-2.north east);
\draw (mim-1-1.south west) -- (mim-1-2.south east);
\draw (mim-2-1.south west) -- (mim-2-2.south east);
\end{tikzpicture}%
}

\newcommand\Dmatrix[4][]{%
\begin{tikzpicture}[#1]
\matrix[mimat,inner sep=0pt] (mim)
{
#2 \& #3 \\
#4 \&  \\
};
\draw (mim-1-1.north west) -- (mim-2-1.south west);
\draw (mim-1-1.north east) -- (mim-2-1.south east);
\draw (mim-1-2.north east) -- (mim-1-2.south east);
\draw (mim-1-1.north west) -- (mim-1-2.north east);
\draw (mim-1-1.south west) -- (mim-1-2.south east);
\draw (mim-2-1.south west) -- (mim-2-1.south east);
\end{tikzpicture}%
}

\newcommand\Kmatrix[3][]{%
\begin{tikzpicture}[#1]
\matrix[mimat,inner sep=0pt] (mim)
{
#2 \& #3 \\
};
\draw (mim-1-1.north west) -- (mim-1-1.south west);
\draw (mim-1-1.north east) -- (mim-1-1.south east);
\draw (mim-1-2.north east) -- (mim-1-2.south east);
\draw (mim-1-1.north west) -- (mim-1-2.north east);
\draw (mim-1-1.south west) -- (mim-1-2.south east);
\end{tikzpicture}%
}

\newcommand\CKmatrixi[3][]{%
\begin{tikzpicture}[#1]
\matrix[mimat,nodes in empty cells,text width=2.5em,inner sep=0pt] (mim)
{
\& #2 \\
#3 \& \\
};
\draw (mim-1-1.south west) -- (mim-2-1.south west);
\draw (mim-1-1.north east) -- (mim-2-1.south east);
\draw (mim-1-2.north east) -- (mim-1-2.south east);

\draw (mim-1-2.north west) -- (mim-1-2.north east);
\draw (mim-1-1.south west) -- (mim-1-2.south east);
\draw (mim-2-1.south west) -- (mim-2-1.south east);
\end{tikzpicture}%
}

\newcommand\knob[1]{$k$\nobreakdash-\hspace{0pt}#1}

\newcommand\inte[1][n]{\{1,\ldots,#1\}}

\newcommand\ile[1][n]{I^{\leftarrow}_{#1}}
\newcommand\iri[1][n]{I^{\rightarrow}_{#1}}
\newcommand\ido[1][n]{I^{\downarrow}_{#1}}
\newcommand\iup[1][n]{I^{\uparrow}_{#1}}

\title{Functorial embeddings associated with the Four Subspace Problem}
\author{Ivon Dorado\thanks{Departamento de Matemáticas, Universidad Nacional de Colombia, sede Bogotá} \and Gonzalo Medina\thanks{Departamento de Matemáticas, Universidad Nacional de Colombia, sede Manizales}}
\date{\today}

\begin{document}

\maketitle

\section*{Abstract}
We define a unified categorical framework for studying six subproblems arising from the classical Four Subspace Problem. For each subproblem, we construct a functor from its associated category to the category of representations of the quiver corresponding to the Four Subspace Problem. This approach gives a common structural setting for the six cases considered and allows a simultaneous and coherent analysis via functorial methods.

We prove that the six functors are additive and fully faithful, and we show that none of them is dense. As a consequence, each functor induces an equivalence between the corresponding source category and a well-identified full subcategory of the target category. These equivalences provide an effective mechanism for transferring classification results and structural properties, thereby clarifying the structural interrelations among the categories studied.\\[0.2cm]
\textbf{Keywords:} Four Subspace Problem; quiver representations; fully faithful functors; indecomposable objects.

\section*{Introduction}
The well-known Four Subspace Problem is closely connected to several classical classification problems in linear algebra and representation theory. These include problems involving quiver representations of types $\widetilde{A}_3$ and $\widetilde{A}_2$, the Kronecker pencil and its contragredient version, as well as classification problems for linear relations. Each of these has been studied extensively and occupies a central place in the theory of linear transformations and matrix problems. Despite their different formulations, they share a common structural nature and can be regarded as formalizations of a single underlying categorical setting. Understanding and exploiting this common structure is valuable, as it allows techniques and classification results obtained in one context to be transferred to others. 

In this paper we construct six additive, fully faithful functors from the categories associated with these subproblems into the category $\repFk$ of representations of the quiver $\carc{F}$ associated with the Four Subspace Problem. We show that their essential images are full subcategories of $\repFk$, and we provide intrinsic characterizations of these subcategories in terms of conditions on the associated linear maps. This yields a unified categorical framework in which all six subproblems can be studied simultaneously and, in particular, leads to a complete classification of the indecomposable objects in each subcategory.

The Four Subspace Problem consists of obtaining the classification, up to isomorphism, of four subspaces of a finite-dimensional vector space over a field. It was originally introduced and solved by~\textcite{GelPon1970}, and has since been addressed using a variety of methods (see, for example, \textcite{Naz1967,Naz1975,Bre1967,Bre1974,MedZav2004, AssSimSko2006-1,For2008}).

This problem is closely connected with several related classification problems. The case of a quiver of type $\widetilde{A}_{3}$ (the Euclidean, or affine, Dynkin diagram associated to $A_{3}$) deals with the classification of four linear operators defined between four finite-dimensional vector spaces and was first solved by Nazarova~\cite{Naz1967}. More recently, another solution was presented in~\cite{DorMed2025}. The case of type $\widetilde{A}_{2}$ similarly corresponds to the classification of three linear operators defined between three finite-dimensional vector spaces.

Another closely related problem is the Kronecker problem that involves the classification of pairs of linear transformations between two finite-dimensional vector spaces. A partial solution was given by Weierstrass in~\cite{Wei1868}, while a comprehensive treatment was  later provided by Kronecker in~\cite{Kro1990}. Over the last decades, various techniques have been used to propose different solutions to the original problem as well as some of its extensions (see, for example, \textcite{DorMed2023,Die46,Dev84,GabRoi1992,Ben95,Rin84,%
AusReiSma97,Zav2007,DmyFonRyb16}). A related variant is its contragredient version, where the domain and codomain of one of the two operators are interchanged. This was solved in~\cite{DobPon1965} by Dobrovol'skaya and Ponomarev.

In~\cite{Mac1961}, MacLane introduced linear relations under the denomination \textquote{additive relations}. In~\cite{Tow1971}, Towber studied the category formed by a single linear relation and obtained the classification of its indecomposable objects. In addition to the case of a single linear relation, that we generalize, we will also introduce the classification problem for pairs of linear relations from one vector space to another.  According to some literature (for example, \cite{GelPon1970}) such a classification was obtained by Ponomarev, although the original source is not readily available.

The paper is organized as follows. In Section~\ref{sec:FSPandsubproblems} we review the representation theory of the quiver associated with the Four Subspace Problem and describe the six subproblems under consideration. In Section~\ref{sec:functors} we construct the corresponding functors to the category of representations of $\carc{F}$, prove that they are additive and fully faithful, determine their images, and analyze the resulting categorical consequences. Finally, in Section~\ref{sec:matrices} we present explicit matrix descriptions and use them to obtain complete classifications of the indecomposable objects in each case.

\section{The Four Subspace Problem and its subproblems}
\label{sec:FSPandsubproblems}
Solving the Four Subspace Problem consists of obtaining the classification, up to isomorphism, of all the quadruples of subspaces of a finite-dimensional vector space over a field $k$.  This is equivalent to describing all indecomposable representations, up to isomorphism, of the partially ordered set $\carc{T}$, usually called \textdef{tetrad}, formed by four mutually incomparable points. 

The category $\repTk$ is a subcategory of the category $\repFk$ of representations of the quiver $\carc{F}$ which corresponds to a 
one-point extension of the tetrad $\carc{T}$ by adjoining a point $0$ and arrows from the original points to $0$.
\begin{figure}[H]
\centering
\begin{tikzpicture}[baseline=(current bounding box.center)]
\node[copo,label={below:$1$}] (1) {};
\node[copo,right=of 1,label={below:$2$}] (2) {};
\node[copo,right=of 2,label={below:$3$}] (3) {};
\node[copo,right=of 3,label={below:$4$}] (4) {};

\node[copo,right=4cm of 4,label={below:$0$}] (l0) {};

\node[copo,above left=of l0,label={above:$1$}] (l1) {}; 
\node[copo,below left=of l0,label={below:$2$}] (l2) {}; 
\node[copo,above right=of l0,label={above:$3$}] (l3) {}; 
\node[copo,below right=of l0,label={below:$4$}] (l4) {}; 
\node at ([yshift=1cm]{$(2)!0.5!(3)$ ) {$\carc{T}$}};
\node[yshift=2cm] at (l0) {$\carc{F}$};
\draw[->,corto] 
  (l1) -- 
  node[anchor=south,font=\footnotesize] {$\alpha$} 
  (l0);
\draw[->,corto] 
  (l2) -- 
  node[anchor=south,font=\footnotesize] {$\beta$} 
  (l0);
\draw[->,corto] 
  (l3) -- 
  node[anchor=south,font=\footnotesize] {$\gamma$} 
  (l0);
\draw[->,corto] 
  (l4) -- 
  node[anchor=south,font=\footnotesize] {$\delta$} 
  (l0);
\end{tikzpicture}
\caption{The tetrad $\carc{T}$ and its one-point extension $\carc{F}$.}
\end{figure}
The indecomposable representations of $\carc{F}$ consist of those arising from the tetrad together with the following representations, which correspond to the injective objects associated to the vertices $1$, $2$, $3$, and $4$:
\begin{equation}
\label{equ:injective}
\begin{tikzpicture}[baseline={([yshift=-0.5cm]current bounding box.center)},node distance=0.5cm and 0.5cm]
% diagrama para I(1)
\node[copo,label={below:$0$}] (V10) {};
\node[copo,above left=of V10,label={above:$k$}] (V11) {}; 
\node[copo,below left=of V10,label={below:$0$}] (V12) {}; 
\node[copo,above right=of V10,label={above:$0$}] (V13) {}; 
\node[copo,below right=of V10,label={below:$0$}] (V14) {}; 
\draw[->,corto] 
  (V11) -- 
  (V10);
\draw[->,corto] 
  (V12) -- 
  (V10);
\draw[->,corto] 
  (V13) -- 
  (V10);
\draw[->,corto] 
  (V14) -- 
  (V10);
%
% diagrama para I(2)
\node[copo,right=2.5cm of V10,label={below:$0$}] (V20) {};
\node[copo,above left=of V20,label={above:$0$}] (V21) {}; 
\node[copo,below left=of V20,label={below:$k$}] (V22) {}; 
\node[copo,above right=of V20,label={above:$0$}] (V23) {}; 
\node[copo,below right=of V20,label={below:$0$}] (V24) {}; 
\draw[->,corto] 
  (V21) -- 
  (V20);
\draw[->,corto] 
  (V22) -- 
  (V20);
\draw[->,corto] 
  (V23) -- 
  (V20);
\draw[->,corto] 
  (V24) -- 
  (V20);
%
% diagrama para I(3)
\node[copo,right=2.5cm of V20,label={below:$0$}] (V30) {};
\node[copo,above left=of V30,label={above:$0$}] (V31) {}; 
\node[copo,below left=of V30,label={below:$0$}] (V32) {}; 
\node[copo,above right=of V30,label={above:$k$}] (V33) {}; 
\node[copo,below right=of V30,label={below:$0$}] (V34) {}; 
\draw[->,corto] 
  (V31) -- 
  (V30);
\draw[->,corto] 
  (V32) -- 
  (V30);
\draw[->,corto] 
  (V33) -- 
  (V30);
\draw[->,corto] 
  (V34) -- 
  (V30);
%
% diagrama para I(4)
\node[copo,right=2.5cm of V30,label={below:$0$}] (V40) {};
\node[copo,above left=of V40,label={above:$0$}] (V41) {}; 
\node[copo,below left=of V40,label={below:$0$}] (V42) {}; 
\node[copo,above right=of V40,label={above:$0$}] (V43) {}; 
\node[copo,below right=of V40,label={below:$k$}] (V44) {}; 
\draw[->,corto] 
  (V41) -- 
  (V40);
\draw[->,corto] 
  (V42) -- 
  (V40);
\draw[->,corto] 
  (V43) -- 
  (V40);
\draw[->,corto] 
  (V44) -- 
  (V40);
%
% names
\foreach \ver in {1,2,3,4}
  {
  \node[label={above:$I(\ver)$},yshift=1.3cm] at (V\ver 0) {};
  }
\end{tikzpicture}
\end{equation}
The indecomposable representations coming from the tetrad are displayed in Figure~\ref{fig:FSPsolution} in matrix form. These representations were obtained in Medina and Zavadskiy~\cite[Theorem 1]{MedZav2004}.
Together with the injective objects listed above, and using the equivalence between $\repTk$ and the full subcategory of socle-projective representations of $\repFk$ established by Bautista and Dorado in~\cite[Theorem 22]{BauDor2017}, we obtain the following classification result.
\begin{theo}
Given an arbitrary field $k$, all indecomposable representations of the quiver~$\carc{F}$ are determined, up to isomorphism, by the objects presented in Figure~\ref{fig:FSPsolution}, together with the indecomposable injective objects listed in~\eqref{equ:injective}.\hfill\qedsymbol
\end{theo}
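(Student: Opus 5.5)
The plan is to split an arbitrary indecomposable representation $V=(V_0,V_1,V_2,V_3,V_4;\alpha,\beta,\gamma,\delta)$ of $\carc{F}$ according to whether it is socle-projective. Here, following~\cite{BauDor2017}, socle-projective means that the socle of $V$ is a direct sum of copies of the simple projective $S(0)=P(0)$. Concretely, this amounts to requiring that the maps $V_i\to V_0$ given by $\alpha,\beta,\gamma,\delta$ all be injective.

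The first step handles the non-socle-projective case. Suppose some structure map, say $\alpha$, has a nonzero kernel, and choose a nonzero $v\in\Ker\alpha$. Then $v$ spans a subrepresentation isomorphic to the simple $S(1)$. Since $\carc{F}$ has only arrows $i\to 0$, the simple $S(1)$ coincides with the injective $I(1)$ displayed in~\eqref{equ:injective}. An injective subobject is a direct summand, so $V\cong I(1)$ by indecomposability. The same argument applies for the vertices $2,3,4$. Hence every indecomposable that is not socle-projective is one of the four objects $I(1),\dots,I(4)$, and these are pairwise non-isomorphic.

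The second step treats the socle-projective indecomposables. By~\cite[Theorem 22]{BauDor2017}, the full subcategory of socle-projective representations of $\carc{F}$ is equivalent to $\repTk$. Under this equivalence a representation of the tetrad, that is, a space $V_0$ together with four subspaces, corresponds to the representation whose arrows are the four inclusions. An equivalence preserves indecomposability and reflects isomorphism classes. Therefore the socle-projective indecomposables are, up to isomorphism, exactly the images of the indecomposable representations of $\carc{T}$. These were classified over an arbitrary field in~\cite[Theorem 1]{MedZav2004}, and that classification is what Figure~\ref{fig:FSPsolution} records in matrix form.

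Finally, the two families do not overlap, because the injectives $I(i)$ are not socle-projective. Combining both steps yields the stated list. The only delicate point is ensuring that the matrix presentation in Figure~\ref{fig:FSPsolution} matches, under the equivalence, the objects of~\cite{MedZav2004}, including the parameter families that depend on polynomials over an arbitrary field. Since this is a direct translation of the cited classification, I expect no real obstacle beyond bookkeeping.
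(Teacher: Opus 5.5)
Your proposal is correct and follows the same route as the paper. The paper justifies the theorem simply by combining the Medina--Zavadskij classification of tetrad indecomposables with the Bautista--Dorado equivalence between $\repTk$ and the socle-projective representations of $\carc{F}$, then adjoining $I(1),\dots,I(4)$; your argument that a non-socle-projective indecomposable must be some $I(i)$ (a nonzero kernel vector spans the simple injective $S(i)=I(i)$, which splits off) supplies the step the paper leaves implicit.
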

Next we consider the representation type for the path algebra of the quiver $\carc{F}$ associated to the Four Subspace Problem. The representation type for each of the six subproblems  will be discussed in Section~\ref{sec:functors}.
\begin{theo}
\label{teo:finitegrowthF}
Let $A=k\carc{F}$ be the path algebra associated to the quiver $\carc{F}$. Then $A$ is tame, of finite growth.
\end{theo}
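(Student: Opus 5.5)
The quiver $\carc{F}$ is the four-subspace quiver: one sink $0$ and four arrows $\alpha,\beta,\gamma,\delta$ into it. Its underlying graph is the extended Dynkin diagram $\widetilde{D}_4$. The plan is to derive tameness and finite growth from this identification, using the classification theorem stated just before Theorem~\ref{teo:finitegrowthF} as the explicit source of the one-parameter families.

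First I will record that $\carc{F}$ is connected and acyclic, so $A=k\carc{F}$ is a finite-dimensional hereditary algebra. Its Tits form
\[
q(x)=\sum_{i=0}^{4}x_i^2-\sum_{i=1}^{4}x_ix_0
\]
is positive semidefinite but not positive definite. Its radical is generated by $\delta_{\carc{F}}=(2;1,1,1,1)$, with $2$ at the vertex $0$. This is the standard criterion, going back to Nazarova and Donovan--Freislich, for a hereditary algebra to be tame but not of finite type; alternatively one invokes Gabriel's theorem to exclude finite type. Tameness then follows from the general theory for Euclidean quivers, valid over an arbitrary field.

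To make the argument self-contained with respect to the paper, I will read off the one-parameter families directly from Figure~\ref{fig:FSPsolution}, since the previous theorem lists every indecomposable. The indecomposables split into three kinds:
\begin{itemize}
\item preprojectives and preinjectives, including the injectives $I(1),\dots,I(4)$ from \eqref{equ:injective}, each determined by its dimension vector;
\item regular modules in the three exceptional tubes of rank $2$;
\item homogeneous tubes parametrized by irreducible polynomials, i.e.\ by the closed points of $\mathbb{P}^1_k$ other than the three exceptional ones.
\end{itemize}
For each dimension vector $d$, the indecomposables of dimension $d$ therefore form at most one one-parameter family. This happens exactly when $d$ is a multiple of $\delta_{\carc{F}}$, up to finitely many exceptional modules.

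Finite growth (domesticity) means that the number $\mu_A(d)$ of one-parameter families needed in dimension $d$ is bounded independently of $d$, indeed by $1$. The count above gives this directly, so $A$ is domestic, and in particular of polynomial growth with zero growth rate.

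The main obstacle I expect is the case of an arbitrary, possibly non-algebraically closed, field $k$. There, tameness in the Drozd sense is usually formulated over algebraically closed fields, and the homogeneous tubes are indexed by irreducible polynomials of varying degree rather than by scalars. I would handle this by first working over $\bar{k}$, where the Tits-form argument applies verbatim. I would then note that the matrix forms in Figure~\ref{fig:FSPsolution} are given uniformly by Frobenius (companion) blocks of an irreducible polynomial. Hence over $k$ the parametrization is still by a single family in each dimension $n\delta_{\carc{F}}$, indexed by irreducible polynomials of degree dividing $n$. This gives the finite-growth bound over $k$ as well.
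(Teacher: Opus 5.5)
Your route differs from the paper's. You get tameness from the general theory of Euclidean quivers ($\widetilde{D}_4$, Tits form, Nazarova and Donovan--Freislich), and then bound $\mu_A(d)$ by counting isomorphism classes in Figure~\ref{fig:FSPsolution}. The paper instead writes down a single $k[t]$-$A$-bimodule $N$ and shows that $\{\widehat{N}\}$, with $\widehat{N}=-\otimes_{k[t]}N$, is an almost parametrizing family in every dimension.

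The gap in your version is exactly at the step that this construction replaces. The invariant $\mu_A(d)$ does not count ``curves of isomorphism classes''. It counts parametrizing functors $-\otimes_{k[t]}M$, where $M$ is a $k[t]$-$A$-bimodule that is finitely generated and free over $k[t]$. Knowing that the indecomposables of dimension vector $n\delta_{\carc{F}}$ are indexed by irreducible polynomials with $\deg p\mid n$ does not exhibit such an $M$. So ``the parametrization is still by a single family'' is asserted, not proved. Over a non-closed $k$, the same kind of witness is what tameness itself requires.

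Your detour through $\bar{k}$ does not close this gap. The Tits-form criterion you quote is formulated over algebraically closed fields, and tameness of $\bar{k}\carc{F}$ does not by itself produce parametrizing bimodules over $k$. Over an algebraically closed field you could simply cite that tame hereditary algebras are domestic with $\mu_A(d)\le 1$, but then your counting would be redundant.

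The missing ingredient is small, and your remark about uniform Frobenius blocks points straight at it. Define $N$ as follows:
\begin{itemize}
\item $k[t]$ at the vertices $1,2,3,4$ and $k[t]^{2}$ at $0$;
\item the canonical inclusions at $\alpha$ and $\beta$;
\item the diagonal $q\mapsto(q,q)$ at $\gamma$;
\item the map $q\mapsto(tq,q)$ at $\delta$.
\end{itemize}
All these maps are $k[t]$-linear, and $N$ is free of rank $6$ over $k[t]$. In the basis $1,t,\dots,t^{n-1}$, $t$ acts on $k[t]/(p^{s})$ by the companion matrix $F_{n}(p^{s}(t))$. Hence, for $p(t)\neq t$, the module $k[t]/(p^{s})\otimes_{k[t]}N$ is precisely the type-$\mathrm{0}$ matrix form of Figure~\ref{fig:FSPsolution}. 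The classification then shows that in each dimension all but finitely many indecomposables are of the form $X\otimes_{k[t]}N$. This gives tameness and $\mu_A(d)\le 1$ directly over $k$, with no passage to $\bar{k}$ and no Tits form.

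One small point: $\mu_A(d)$ is indexed by total dimension, not dimension vector. Your per-dimension-vector count still suffices, because a given total dimension admits only finitely many dimension vectors, and the vectors $n\delta_{\carc{F}}$ have pairwise distinct total dimensions $6n$.
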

\begin{proof}
This is Exercise 4, Chapter XIX of \cite{SimSko2006-3}. For the path algebra $A=k\carc{F}$, let us consider the following $k[t]$-$A$-bimodule $N$:
\[
\begin{tikzpicture}[
  node distance=2cm and 2cm,
  baseline=(current bounding box.center)
  ]
\node[copo,label={above:$k[t]$}] (1) {}; 
\node[copo,below=of 1,label={below:$k[t]$}] (2) {}; 
\node[copo,right=of 1,label={above:$k[t]$}] (3) {}; 
\node[copo,below=of 3,label={below:$k[t]$}] (4) {}; 
\coordinate (aux1) at ( $ (1)!0.5!(3) $ );
\coordinate (aux2) at ( $ (1)!0.5!(2) $ );
\node[copo,label={[label distance=3pt]right:$k[t]^{2}$}] at (aux1|-aux2) (0) {};
\draw[->,corto] 
  (1) -- 
  node[label={above:$\iota_{1}$}] {} 
  (0);
\draw[->,corto] 
  (2) -- 
  node[label={below:$\iota_{2}$}] {} 
  (0);
\draw[->,corto] 
  (3) -- 
  node[label={above:$f$}] {} 
  (0);
\draw[->,corto] 
  (4) -- 
  node[label={below:$g$}] {} 
  (0);
\end{tikzpicture},
\]
where $\iota_{1}$ and $\iota_{2}$ are the canonical inclusions, $f=\iota_{1}+\iota_{2}$ is the diagonal and $g$ is given by $g(q(t))=(tq(t),q(t))$, for all $q(t)\in k[t]$.

As left $k[t]$-module, $N$ is a finitely generated free module. Furthermore, if we now take an indecomposable $k[t]$-module $X$, then $X\simeq k[t]/(p^{s}(t))$, for an irreducible monic polynomial $p(t)\in k[t]$ with $r=s\cdot\deg p$. Moreover, the associated functor
\[
\widehat{N}=\text{---}\otimes_{k[t]} N_{A}
\colon
\indes k[t]\longrightarrow \mo A
\]
sends the module $X$ to the indecomposable $A$-module $\widehat{N}(X)$ of the form
\[
\begin{tikzpicture}[
  node distance=2cm and 2cm,
  baseline=(current bounding box.center)
  ]
\node[copo,label={above:$k^{r}$}] (1) {}; 
\node[copo,below=of 1,label={below:$k^{r}$}] (2) {}; 
\node[copo,right=of 1,label={above:$k^{r}$}] (3) {}; 
\node[copo,below=of 3,label={below:$k^{r}$}] (4) {}; 
\coordinate (aux1) at ( $ (1)!0.5!(3) $ );
\coordinate (aux2) at ( $ (1)!0.5!(2) $ );
\node[copo,label={[label distance=2pt]right:$k^{2r}$}] at (aux1|-aux2) (0) {};
\draw[->,corto] 
  (1) -- 
  node[label={above:$\iota'_{1}$}] {} 
  (0);
\draw[->,corto] 
  (2) -- 
  node[label={below:$\iota'_{2}$}] {} 
  (0);
\draw[->,corto] 
  (3) -- 
  node[label={above:$F$}] {} 
  (0);
\draw[->,corto] 
  (4) -- 
  node[label={below:$G$}] {} 
  (0);
\end{tikzpicture}
\]
where $\iota'_{1}$ and $\iota'_{2}$ are the canonical inclusions, $F=\iota'_{1}+\iota'_{2}$ is the diagonal and $G$ is given by $G(v)=(T(v),v)$, for all $v\in k^{r}$, where $T\colon k^{r}\to k^{r}$ is the linear endomorphism whose matrix is the companion matrix of $p^{s}(t)$, for a monic and irreducible polynomial  $p(t)$.

On the other hand, as was proven in Medina and Zavadskij~\cite{MedZav2004} (see Figure~\ref{fig:FSPsolution}), $\indes_{d} A$ has infinitely many isomorphism classes of modules if and only if $d=2m$, for some integer $m\geq 1$ and $p(t)\notin\{t,t-1\}$ and so, for each integer $d\geq 1$, the family whose only element is the functor $\widehat{N}$ is an almost parametrizing family for $\indes_{d} A$. Hence, the algebra $A=k\carc{F}$ is tame, of finite growth ($\mu_{A}(d)\leq 1$, for all $d\geq 1$) and one-parametric.
\end{proof}

\subsection{\raggedright The quivers $\carc{S}$, $\carc{D}$, $\carc{K}$, and $\carc{C}$}
\label{sec:A3A2KC}
We will consider the four quivers $\carc{S}$, $\carc{D}$, $\carc{K}$, and $\carc{C}$ shown in Figure~\ref{fig:fourquivers}. Their representations correspond to the following configurations of vector spaces and linear operators.
\begin{figure}[H]
%A3
\begin{subcaptionblock}{0.5\textwidth}
\centering
\begin{tikzpicture}[node distance=1.3cm and 1.5cm,baseline=(current bounding box.center)]
\node[copo,label={left:$1$}] (S1) {};
\node[copo,below=of S1,label={left:$2$}] (S2) {};
\node[copo,right=of S1,label={right:$3$}] (S3) {};
\node[copo,below=of S3,label={right:$4$}] (S4) {};

\draw[->,corto] 
  (S3) -- 
    node[anchor=south] {$\alpha$} 
  (S1);
\draw[->,corto] 
  (S3) -- 
    node[near end,fill=white] {$\beta$} 
  (S2);
\draw[->,corto] 
  (S4) -- 
    node[near end,fill=white] {$\gamma$} 
  (S1);
\draw[->,corto] 
  (S4) -- 
    node[anchor=north] {$\delta$} 
  (S2);
\end{tikzpicture}
\caption{The quiver $\carc{S}$}
\label{fig:quiverS}
\end{subcaptionblock}%
%A2%
\begin{subcaptionblock}{0.5\textwidth}
\centering
% A2
\begin{tikzpicture}[node distance=1.3cm,baseline=(current bounding box.center)]
\node[copo,label={left:$1$}] (S1) {};
\node[copo,below=of S1,label={right:$2$}] (S2) {};
\node[copo,right=of S1,label={right:$3$}] (S3) {};

\draw[->,corto] 
  (S2) -- 
    node[anchor=east] {$\gamma$} 
  (S1);
\draw[->,corto] 
  (S3) -- 
    node[below right] {$\beta$} 
  (S2);
\draw[->,corto] 
  (S3) -- 
    node[above] {$\alpha$} 
  (S1);
\end{tikzpicture}
\caption{The quiver $\carc{D}$}
\label{fig:quiverD}
\end{subcaptionblock}\\[2ex]
% Kronecker
\begin{subcaptionblock}{0.5\textwidth}
\centering
\begin{tikzpicture}[node distance=1.5cm,baseline=(current bounding box.center)]
\node[copo,label={left:$1$}] (K1) {};
\node[copo,right=of K1,label={right:$2$}] (K2) {};

\draw[->,corto] 
  ( $ (K2) + (-3pt,3pt) $ ) -- 
    node[anchor=south] {$\alpha$} 
  ( $ (K1) + (3pt,3pt) $ );
\draw[->,corto] 
  ( $ (K2) + (-3pt,-3pt) $ ) -- 
    node[anchor=north] {$\beta$} 
  ( $ (K1) + (3pt,-3pt) $ );
\end{tikzpicture}
\caption{The quiver $\carc{K}$}
\label{fig:quiverK}
\end{subcaptionblock}%
% Contragredient
\begin{subcaptionblock}{0.5\textwidth}
\centering
\begin{tikzpicture}[node distance=1.5cm and 1.5cm,baseline=(current bounding box.center)]
\node[copo,label={left:$1$}] (C1) {};
\node[copo,right=of S1,label={right:$2$}] (C2) {};

\draw[->,corto] 
  ( $ (C2) + (-3pt,3pt) $ ) -- 
    node[anchor=south] {$\alpha$} 
  ( $ (C1) + (3pt,3pt) $ );
\draw[->,corto] 
  ( $ (C1) + (3pt,-3pt) $ ) -- 
    node[anchor=north] {$\beta$} 
  ( $ (C2) + (-3pt,-3pt) $ );
\end{tikzpicture}
\caption{The quiver $\carc{C}$}
\label{fig:quiverC}
\end{subcaptionblock}
\caption{Quivers associated to four of the subproblems considered.}
\label{fig:fourquivers}
\end{figure}
\subsubsection*{The quiver $\carc{S}$ of type $\widetilde{A}_{3}$}
A representation of is given by four vector spaces $V_{1}$, $V_{2}$, $V_{3}$, and $V_{4}$ together with linear maps
\[
f_{\alpha}\colon V_{3}\to V_{1},\qquad
f_{\beta}\colon V_{3}\to V_{2},\qquad
f_{\gamma}\colon V_{4}\to V_{1},\qquad
f_{\delta}\colon V_{4}\to V_{2}.
\]
This can be interpreted as the classification of pairs of linear maps from two source spaces into two target spaces. 

\subsubsection*{The quiver $\carc{D}$ of type $\widetilde{A}_{2}$}
A representation consists of three vector spaces $V_{1}$, $V_{2}$, $V_{3}$ and linear maps
\[
f_{\alpha}\colon V_{3}\to V_{1},\qquad
f_{\beta}\colon V_{3}\to V_{2},\qquad
f_{\gamma}\colon V_{2}\to V_{1}.
\]
forming a triangular configuration of operators. 
\subsubsection*{The Kronecker quiver $\carc{K}$}
A representation consists of two vector spaces $V_{1}$, $V_{2}$ and two linear operators:
\[
f_{\alpha}\colon V_{2}\to V_{1},\qquad
f_{\beta}\colon V_{2}\to V_{1}.
\]
The associated classification problem is the classical Kronecker problem: the classification of pairs of linear operators between two vector spaces up to simultaneous equivalence.
\subsubsection*{The contragredient Kronecker quiver $\carc{C}$}
A representation consists of two vector spaces $V_{1}$, $V_{2}$ and two linear maps:
\[
f_{\alpha}\colon V_{2}\to V_{1},\qquad
f_{\beta}\colon V_{1}\to V_{2}.
\]
The associated classification problem is often referred to as the classification of a pair of confronted operators.

The functorial relations that we will study in Section~\ref{sec:functors}, will allow us to view each of the category of representations of the quivers $\carc{S}$, $\carc{D}$, $\carc{K}$, and $\carc{C}$, together with the two involving linear relations presented below, embedded into the representation category of the quiver $\carc{F}$.

\subsection{Categories associated to linear relations}
Linear relations provide a natural generalization of linear operators. Unlike linear operators, they allow multivalued correspondences between vector spaces and relations whose domain is a proper subspace of the source space. They can be conveniently described as linear subspaces of a direct sum. In this section we introduce, based in the work of Towber~\cite{Tow1971}, the basic notions associated with linear relations and their representation categories. 
\begin{defi}
Given two vector spaces $V_{1}$ and $V_{2}$ over a field $k$, a \textdef{\knob{linear} relation from $V_{1}$ to $V_{2}$}, or simply a \textdef{linear relation from $V_{1}$ to $V_{2}$}, is a triple 
\[
\rho=(V_{1},V_{2},R), 
\]
where $R$ is any $k$\nobreakdash-subspace of $V_{1}\oplus V_{2}$. Equivalently, $R$ is a non-empty linear binary relation from $V_{1}$ to $V_{2}$ satisfying the following conditions: whenever $xRy$ and $x'Ry'$, then 
\[
(x+x')R(y+y') 
\]
and, for every $\alpha\in k$,
\[
(\alpha x)R(\alpha y).
\]

The \knob{linear} relation $(V_{1},V_{2},R)$ is called \textdef{finite-dimensional} if both $V_{1}$ and $V_{2}$ are finite-dimensional vector spaces over $k$. 

The \textdef{inverse relation} of a \knob{linear} relation and the \textdef{product} (or \textdef{composition}) of two \knob{linear} relations are defined in the usual way. It is worth noting that they are again linear relations. 
\begin{defi}
The \textdef{dual relation} of a \knob{linear} relation $\rho=(V_{1},V_{2},R)$ is the relation
\[
\rho^{\ast}=(V_{1}^{\ast},V_{2}^{\ast},R^{\ast}), 
\]
where $V_{1}^{\ast}=\Hom_{k}(V_{1},k)$ and $V_{2}^{\ast}=\Hom_{k}(V_{2},k)$ are the algebraic dual spaces of $V_{1}$ and $V_{2}$, respectively, and  $R^{\ast}\subseteq V_{1}^{\ast}\oplus V_{2}^{\ast}$ is defined by
\[
R^{\ast}=\{(f,g)\in V_{1}^{\ast}\oplus V_{2}^{\ast} \mid xRy \Rightarrow g(x)=f(y)\}.
\]
\end{defi}
\begin{defi}
Let $\rho=(V_{1},V_{2},R)$ and $\sigma=(W_{1},W_{2},S)$ be linear relations. A \textdef{morphism}
\[
f\colon\rho\to\sigma 
\]
is a pair $f=(f_{1},f_{2})$ of linear operators $f_{1}\colon V_{1}\to W_{1}$ and $f_{2}\colon V_{2}\to W_{2}$ 
such that 
\[
xRy
\quad\text{implies}\quad 
f_{1}(x)Sf_{2}(y). 
\]
The morphism $f=(f_{1},f_{2})\colon\rho\to\sigma$ is an \textdef{isomorphism} if both $f_{1}$ and $f_{2}$ are \knob{linear} isomorphisms and 
\[
xRy 
\quad\text{if and only if}\quad
f_{1}(x)Sf_{2}(y).
\]
In this case, we write $\rho\simeq\sigma$.

Given linear relations $\rho=(V_{1},V_{2},R)$ and $\sigma=(W_{1},W_{2},S)$, we define their \textdef{direct sum} by
\[
\rho\oplus\sigma=(V_{1}\oplus W_{1},V_{2}\oplus W_{2},R\oplus S).
\] 
A linear relation $\rho$ is \textdef{indecomposable} if $\rho\neq 0$ and $\rho\simeq\sigma\oplus\tau$ implies $\sigma=0$ or $\tau=0$. 
\end{defi}
\begin{defi}
Fix two vector spaces $V_{1}$ and $V_{2}$ over a field $k$. Taking \knob{linear} relations $\rho=(V_{1},V_{2},R)$ as objects and the morphisms defined above, we obtain the category \textdef{$\lrelt$ of \knob{linear} relations from $V_{1}$ to $V_{2}$}. 

The category $\lrelo$, or simply $\lrel$, of \textdef{\knob{linear} relations on $V_{1}$} is the full subcategory of $\lrelt$ whose objects are of the form $\rho=(V_{1},V_{1},R)$, which will be denoted simply by $\rho=(V_{1},R)$.

Next, we will consider the category $\lreltt$ of pairs of linear relations. In this category, objects are of the form 
\[
\rho=(V_{1},V_{2},R_{1},R_{2}),
\] 
where $V_{1}$ and $V_{2}$ are two finite-dimensional vector spaces over a field $k$, and $R_{1}$ and $R_{2}$ are linear relations from $V_{1}$ to $V_{2}$.   

A morphism $f\colon\rho\to\sigma$ between two objects $\rho=(V_{1},V_{2},R_{1},R_{2})$ and $\sigma=(W_{1},W_{2},S_{1},S_{2})$ is given by a pair $f=(f_{1},f_{2})$ of \knob{linear} operators $f_{1}\colon V_{1}\to W_{1}$ and $f_{2}\colon V_{2}\to W_{2}$ such that, for all $x,y\in V_{1}$, it holds 
\[
xR_{i}y 
\quad\text{implies}\quad 
f_{1}(x)S_{i}f_{2}(y),
\] 
for all $i\in\{1,2\}$. 
\end{defi}

\begin{exam}

\item Let $f\colon V_{1}\to V_{2}$ be a linear operator and let $\rho=(V_{1},V_{2},R)$, where $R$ is given by
\[
R=\{(x,f(x)) \mid x\in V_{1}\}.
\]
Then $\rho$ is a linear relation. A linear relation constructed in this way is called \textdef{associated with a linear operator}.

Not every \knob{linear} relation, however, is associated with a linear operator. For instance, let $V_{1}=k^{n+1}$, for some fixed integer $n\geq 0$. Define a relation $R$ on $V_{1}$ by
\[
R=
\bigl\lbrace 
  \bigl( 
  (0,\alpha_{1},\ldots,\alpha_{n}),(\alpha_{1},\ldots,\alpha_{n},0) 
  \bigr) 
  \,\bigm\lvert\, 
  \alpha_{i}\in k, %\\
  \text{ for all }i\in\inte
\bigr\rbrace.
\]
This, as the reader can easily confirm, defines a linear relation $\rho=(V_{1},R)$ on $V_{1}$ such that $\Dom R\neq V_{1}$ and, therefore, it is not associated to a linear operator.
\end{exam}
In~\cite{Tow1971}, Towber states that $\lrel$ is an additive category. We can go one step further and prove that it is, in fact, a Krull-Schmidt category. Before doing so, we state an auxiliary lemma.
\begin{lema}
\label{lem:idemsplit}
If $\rho=(V_{1},R)$ is an object of $\lrel$ and $e\in\Endo[\rho]$ is any idempotent, then there exist an object $\sigma=(W,S)$ and morphisms 
\[
\rho\xrightarrow{p}\sigma\xrightarrow{q}\rho
\] 
such that $qp=e$ and $pq=1_{\sigma}$.
\end{lema}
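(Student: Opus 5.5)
The natural construction is to split $e$ through its image. Write $e=(e_{1},e_{1})$. For an object $\rho=(V_{1},R)$ of $\lrel$ a morphism is a pair $(f_{1},f_{2})$ of operators, but here $\sigma$ lives on a single space, so I read an endomorphism of $\rho$ as a pair whose two components coincide, that is, a single operator $e_{1}\colon V_{1}\to V_{1}$. The morphism condition then says that $xRy$ implies $e_{1}(x)Re_{1}(y)$, i.e.\ $(e_{1}\oplus e_{1})(R)\subseteq R$. Idempotence of $e$ means $e_{1}^{2}=e_{1}$.

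Put $W=\Ima e_{1}$ and define $p=(p_{1},p_{1})$, where $p_{1}\colon V_{1}\to W$ is the corestriction of $e_{1}$. Define $q=(q_{1},q_{1})$, where $q_{1}\colon W\to V_{1}$ is the inclusion. Since $e_{1}$ is idempotent, $e_{1}$ acts as the identity on $W$. This gives $p_{1}q_{1}=1_{W}$ and $q_{1}p_{1}=e_{1}$ as linear maps. The remaining task is to choose $S\subseteq W\oplus W$ so that $p$ and $q$ are morphisms in $\lrel$.

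The choice I would make is
\[
S=R\cap(W\oplus W).
\]
The two verifications are as follows.
\begin{enumerate}
\item \emph{$q$ is a morphism.} If $xSy$, then $(x,y)\in R$ with $x,y\in W$, so $q_{1}(x)Rq_{1}(y)$ trivially.
\item \emph{$p$ is a morphism.} If $xRy$, then $e_{1}(x)Re_{1}(y)$ because $e$ is an endomorphism. Moreover $e_{1}(x),e_{1}(y)\in W$, so $(e_{1}(x),e_{1}(y))\in S$, that is, $p_{1}(x)Sp_{1}(y)$.
\end{enumerate}
An equivalent description is $S=(e_{1}\oplus e_{1})(R)$. This holds because elements of $R\cap(W\oplus W)$ are fixed by $e_{1}\oplus e_{1}$, and the image of $R$ lies in both $R$ and $W\oplus W$.

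Finally, the identities $qp=e$ and $pq=1_{\sigma}$ hold componentwise. Composition in $\lrel$ is composition of the underlying operator pairs, so these follow immediately from $q_{1}p_{1}=e_{1}$ and $p_{1}q_{1}=1_{W}$.

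The only real subtlety is choosing $S$ so that both $p$ and $q$ are morphisms. Taking the image $(e_{1}\oplus e_{1})(R)$, or equivalently the restriction $R\cap(W\oplus W)$, resolves this. The argument carries over verbatim to $\lrelt$ and $\lreltt$ by splitting each component and each relation in the same way.
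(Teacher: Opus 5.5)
Your proof is correct and follows the paper's argument: you split $e$ through $W=\Ima e$, with $p$ the corestriction and $q$ the inclusion, and your $S=R\cap(W\oplus W)$ coincides, as you observe, with the paper's choice $S=\{(e(x),e(y))\mid (x,y)\in R\}$. You also carry out the morphism checks that the paper leaves to the reader, and your reading of morphisms in $\lrel$ as single operators is the convention the paper implicitly uses (e.g.\ in defining $\func{F}^{5}$).
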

\begin{proof}
Let us consider any idempotent $e\in\Endo[\rho]$ and take $\sigma=(W,S)$, where $W=\Ima e$ and 
\[
S=\{ (e(x),e(y)) \mid (x,y)\in R \}.
\]
Furthermore, let us take $p\colon V_{1}\to W$ as the morphism induced by $e$ and $q\colon W\to V_{1}$ as the canonical inclusion from $W$ to $V_{1}$. The reader can verify that $S$ is a subspace of $W\oplus W$, and that $p$ and $q$ are morphisms in $\lrel$ from $\rho$ to $\sigma$ and from $\sigma$ to $\rho$, respectively. 

For every $x\in V_{1}$, we have $qp(x)=p(x)=e(x)$, so $qp=e$. For each $y\in W$, $pq(y)=e(y)=e(e(z))$, for some $z\in V_{1}$, thus $pq(y)=e^{2}(z)=e(z)=y$. Hence $pq=1_{\sigma}$. 
\end{proof}

\begin{theo}
\label{teo:lreliskrull}
For a given field $k$, $\lrel$ is a Krull-Schmidt category.
\end{theo}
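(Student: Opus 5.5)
We will use the standard criterion: an additive category in which idempotents split and the endomorphism ring of every object is semiperfect is a Krull--Schmidt category. (Equivalently, every object is a finite direct sum of objects with local endomorphism rings.) Since objects of $\lrel$ are finite-dimensional, it is natural to work with the facts that $\Endo[\rho]$ is a finite-dimensional $k$-algebra and that idempotents split by Lemma~\ref{lem:idemsplit}.

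\emph{Step 1: additivity.} We check that $\lrel$ is additive. Hom-sets $\Hom(\rho,\sigma)$ are $k$-subspaces of $\Hom_k(V_1,W_1)$. Indeed, if $f,g$ are morphisms and $xRy$, then $f(x)Sf(y)$ and $g(x)Sg(y)$, so $(f+g)(x)\,S\,(f+g)(y)$ because $S$ is a subspace; scalars are handled the same way. Composition is bilinear, the zero relation $(0,0)$ is a zero object, and the direct sum $\rho\oplus\sigma$ defined above, together with the canonical inclusions and projections, is a biproduct. The only point to verify is that the inclusions and projections preserve the relations, which is immediate from $R\oplus S$.

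\emph{Step 2: finite-dimensionality of endomorphism rings.} For $\rho=(V_1,R)$ with $\dim V_1<\infty$, the ring $\Endo[\rho]$ is a $k$-subalgebra of $\End_k(V_1)$. Hence it is finite-dimensional, and in particular Artinian, and therefore semiperfect. Every idempotent in it splits by Lemma~\ref{lem:idemsplit}.

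\emph{Step 3: decomposition and locality.} We argue directly by induction on $\dim V_1$. If $\rho$ is not indecomposable, write $\rho\simeq\sigma\oplus\tau$ with both summands nonzero. Since a nonzero object of $\lrel$ must have $W\neq 0$ (a relation on the zero space is zero), both summands have strictly smaller dimension. Thus every object is a finite direct sum of indecomposables.

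Next we show that an indecomposable $\rho$ has a local endomorphism ring. An idempotent $e\neq 0,1$ in $\Endo[\rho]$ splits by Lemma~\ref{lem:idemsplit}. The same holds for $1-e$, and the two splittings together give $\rho\simeq\Ima e\oplus\Ima(1-e)$ in $\lrel$. This step needs care: we must check that $R$ equals the direct sum of the induced relations $S_e$ and $S_{1-e}$. This holds because $(x,y)=(ex,ey)+((1-e)x,(1-e)y)$ and both summands lie in $R$, since $e$ and $1-e$ are endomorphisms. Hence $\Endo[\rho]$ has only the trivial idempotents $0$ and $1$. A finite-dimensional algebra with only trivial idempotents is local, by lifting idempotents modulo the radical.

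\emph{Step 4: uniqueness.} Uniqueness of the decomposition then follows from the Krull--Remak--Schmidt--Azumaya theorem for additive categories whose objects are finite sums of objects with local endomorphism rings.

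The main obstacle is Step 3, namely verifying that a split idempotent really yields an internal direct sum decomposition of the relation $R$ itself, not just of the vector space $V_1$. This is where the morphism condition on $e$ is essential.
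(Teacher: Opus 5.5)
Your proposal is correct and follows essentially the paper's route: you split a nontrivial idempotent $e$ and $1-e$ via Lemma~\ref{lem:idemsplit}, obtain a nontrivial decomposition that contradicts indecomposability, and conclude locality because $\End\rho\subseteq\End_{k}V_{1}$ is a finite-dimensional algebra with only trivial idempotents. The differences are minor: you check directly that $R=S_{e}\oplus S_{1-e}$ where the paper uses the mutually inverse maps $u,v$ built from the biproduct, and your induction on $\dim V_{1}$ makes explicit the existence of finite decompositions, which the paper leaves implicit.
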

\begin{proof}
Verification that $\lrel$ is an additive category is straightforward (the direct sum is the biproduct in this category).  

Now, let us establish that indecomposable linear relations have local endomorphism rings. Consider an indecomposable object $\rho=(V_{1},R)$. We are going to prove that the only idempotents in $\End\rho$ are $0$ and $1_{\rho}$. If we assume that there exists a non-trivial idempotent $e\in \Endo[\rho]$, taking $f=1_{\rho}-e$, we have  both $e$ and $f$ are idempotents different from $0$ and $1_{\rho}$. Thanks to Lemma~\ref{lem:idemsplit}, there exist non-null objects $\sigma_{1}=(W_{1},S_{1})$, $\sigma_{2}=(W_{2},S_{2})$, where $W_{1}$ and $W_{2}$ are different from $V_{1}$, and morphisms
\[
\rho\xrightarrow{p_{1}}\sigma_{1}\xrightarrow{q_{1}}\rho
\quad\text{and}\quad
\rho\xrightarrow{p_{2}}\sigma_{2}\xrightarrow{q_{2}}\rho
\]
such that
\[
q_{1}p_{1} = e,\qquad  
q_{2}p_{2} = f,\qquad
p_{1}q_{1} = 1_{\sigma_{1}},
\qquad\text{and}\qquad  
p_{2}q_{2} = 1_{\sigma_{2}}.
\]
Setting $\iota_{i}\colon\sigma_{i}\to \sigma_{1}\oplus \sigma_{2}$, the canonical inclusions, and $\pi_{i}\colon\sigma_{1}\oplus \sigma_{2}\to\sigma_{i}$, the canonical projections, for $i\in\{1,2\}$, if
\[
u=\iota_{1}p_{1} + \iota_{2}p_{2}
\colon \rho\to\sigma_{1}\oplus\sigma_{2}
\quad\text{and}\quad
v=q_{1}\pi_{1} + q_{2}\pi_{2}
\colon \sigma_{1}\oplus\sigma_{2}\to\rho,
\]
we obtain
\begin{gather*}
uv
=(\iota_{1}p_{1} + \iota_{2}p_{2})(q_{1}\pi_{1} + q_{2}\pi_{2})
=q_{1}p_{1} + q_{2}p_{2} = e+f =1_{\rho}
\\
\shortintertext{and}
vu
=(q_{1}\pi_{1} + q_{2}\pi_{2})(\iota_{1}p_{1} + \iota_{2}p_{2})
=\iota_{1}\pi_{1} + \iota_{2}\pi_{2} = 1_{\sigma_{1}\oplus\sigma_{2}},
\end{gather*}
which shows that $\rho\simeq\sigma_{1}\oplus\sigma_{2}$, so $\rho$ decomposes in a non-trivial manner. This contradiction shows that indeed, $0$ and $1_{\rho}$ are the only idempotents in $\Endo[\rho]$.  Since $\End\rho$ is a subalgebra of the finite-dimensional algebra $\End_{k}{V_{1}}$, this is equivalent to the fact that $\End\rho$ is a local ring (see~\textcite{Lam2013}, Corollary 19.19).
\end{proof}

\begin{theo}
For an arbitrary field $k$, the category $\lreltt$ is a Krull-Schmidt category.
\end{theo}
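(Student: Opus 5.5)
I will follow the proof of Theorem~\ref{teo:lreliskrull} step by step, adapting each ingredient to objects $\rho=(V_{1},V_{2},R_{1},R_{2})$ of $\lreltt$.

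\emph{Additivity.} The zero object is $(0,0,0,0)$, and each hom-set is a $k$-subspace of $\Hom_{k}(V_{1},W_{1})\oplus\Hom_{k}(V_{2},W_{2})$. This holds because the defining condition is preserved under sums and scalar multiples, since each $S_{i}$ is a subspace. The direct sum
\[
\rho\oplus\sigma=(V_{1}\oplus W_{1},V_{2}\oplus W_{2},R_{1}\oplus S_{1},R_{2}\oplus S_{2}),
\]
with the canonical inclusions and projections, is a biproduct. The only point to check is that these inclusions and projections are morphisms, which is immediate from the definition of $R_{i}\oplus S_{i}$.

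\emph{Idempotents split.} This is the analogue of Lemma~\ref{lem:idemsplit}. Let $e=(e_{1},e_{2})\in\End\rho$ be idempotent. Define
\[
\sigma=(\Ima e_{1},\Ima e_{2},S_{1},S_{2}),\qquad S_{i}=\{(e_{1}(x),e_{2}(y))\mid (x,y)\in R_{i}\}.
\]
Take $p=(p_{1},p_{2})$ to be the corestrictions of $e_{1},e_{2}$, and $q=(q_{1},q_{2})$ the inclusions.

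First, $S_{i}$ is a subspace because it is the image of $R_{i}$ under a linear map. Next, $p$ is a morphism by the definition of $S_{i}$. For $q$ to be a morphism, take $(e_{1}x,e_{2}y)\in S_{i}$ with $(x,y)\in R_{i}$; since $e$ is an endomorphism of $\rho$, we get $(e_{1}x,e_{2}y)\in R_{i}$. Finally, $qp=e$ and $pq=1_{\sigma}$ follow exactly as before, componentwise, using $e_{j}^{2}=e_{j}$.

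\emph{Local endomorphism rings of indecomposables.} Let $\rho$ be indecomposable and suppose $e\in\End\rho$ is an idempotent with $e\neq 0,1_{\rho}$. Set $f=1_{\rho}-e$ and split both $e$ and $f$ as above. The same computation with $u=\iota_{1}p_{1}+\iota_{2}p_{2}$ and $v=q_{1}\pi_{1}+q_{2}\pi_{2}$ gives $\rho\simeq\sigma_{1}\oplus\sigma_{2}$, and both summands are nonzero because $e,f\neq 0$. This contradicts indecomposability.

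To pass from "no nontrivial idempotents" to "local", I use finite dimensionality. $\End\rho$ is a $k$-subalgebra of the finite-dimensional algebra $\End_{k}V_{1}\times\End_{k}V_{2}$, so it is finite-dimensional over $k$, hence Artinian. An Artinian ring with only the trivial idempotents is local, as in \textcite{Lam2013}, Corollary 19.19.

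\emph{Decomposition into indecomposables.} I proceed by induction on $\dim V_{1}+\dim V_{2}$. If $\rho$ is not indecomposable, then $\rho\simeq\sigma\oplus\tau$ with both summands nonzero. Each summand then has strictly smaller total dimension, so the induction hypothesis applies to both.

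Uniqueness of the decomposition is not proved separately. Together with local endomorphism rings, it follows from the Krull--Remak--Schmidt--Azumaya theorem in additive categories.

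\emph{Main obstacle.} The only delicate point is that the splitting object $\sigma$ inherits both relations compatibly. In particular, $q$ must be a morphism for each $i\in\{1,2\}$ simultaneously. This reduces to $e$ preserving each $R_{i}$ separately, which is exactly the definition of a morphism in $\lreltt$. Everything else is a verbatim transcription of the single-relation case.
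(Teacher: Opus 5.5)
Your proof is correct and follows the paper's route: the paper's proof simply transfers the argument of Theorem~\ref{teo:lreliskrull}. That argument consists of additivity, splitting idempotents as in Lemma~\ref{lem:idemsplit}, ruling out nontrivial idempotents in $\End\rho$ for indecomposable $\rho$, and deducing locality from $\End\rho\subseteq\End_{k}V_{1}\times\End_{k}V_{2}$ being finite-dimensional. Your induction on $\dim V_{1}+\dim V_{2}$ for the existence of decompositions and your appeal to Krull--Remak--Schmidt--Azumaya for uniqueness only make explicit what the paper leaves implicit.
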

\begin{proof}
The proof follows the same structure as the one we presented for $\lrel$ in Theorem~\ref{teo:lreliskrull}, with the distinction that the endomorphism ring for a pair of linear relations from $V_{1}$ to $V_{2}$ is now a subset of $\End_{k}V_{1}\times \End_{k}V_{2}$.
\end{proof}

\begin{rema}
$\lrelt$ and $\lreltt$ are \emph{not} abelian categories: consider the morphism $1_{V_{1}\oplus V_{2}}=(1_{V_{1}},1_{V_{2}})$ from $(V_{1},V_{2},0)$ to $(V_{1},V_{2},V_{1}\oplus V_{2})$. The morphism $1_{V_{1}\oplus V_{2}}$ is clearly a monomorphism and an epimorphism, but it is not an isomorphism in $\lrelt$. Analogously, for $\lreltt$, consider the morphism $f=(1_{V_{1}},1_{V_{2}})$ from $(V_{1},V_{2},0,0)$ to $(V_{1},V_{2},V_{1}\oplus V_{2},V_{1}\oplus V_{2})$. $f$ is clearly both a monomorphism and an epimorphism, but it is not an isomorphism.
\end{rema}
The reader might wonder about the situation for the categories $\lrelt$, of one linear relation from one vector space to another, and $\lrelto$, of two linear relations on one vector space. It can be shown (see \textcite{Med2025}) that the first case corresponds to a problem of finite type, whereas the second one is of wild type.

\section{Six functors and their properties}
\label{sec:functors}

\subsection{Definition of the functors}
\label{sec:functorsdef}
We now define six covariant functors $\func{F}^{1}$, $\func{F}^{2}$, $\func{F}^{3}$, $\func{F}^{4}$, $\func{F}^{5}$, and $\func{F}^{6}$ from the categories associated with each of the six subproblems to the category $\repFk$, as illustrated below:
\[
\begin{tikzpicture}[baseline=(current bounding box.center)]
\node (0) {$\repFk$};
\node at (180:2.5cm) (1) {$\repCk$};
\node at (240:2.5cm) (2) {$\lrel$};
\node at (300:2.5cm) (3) {$\lreltt$};
\node at (0:2.5cm) (4) {$\repSk$};
\node at (60:2.5cm) (5) {$\repDk$};
\node at (120:2.5cm) (6) {$\repKk$};
\draw[->] 
  (1) --
    node[inner sep=1pt,label={above:$\func{F}^{4}$}] {} 
  (0);
\draw[->] 
  (2) --
    node[inner sep=1pt,label={[xshift=-7pt,yshift=-7pt]above:$\func{F}^{5}$}] {} 
  (0);
\draw[->] 
  (3) --
    node[inner sep=1pt,label={[xshift=10pt,yshift=-7pt]above:$\func{F}^{6}$}] {} 
  (0);
\draw[->] 
  (4) --
    node[inner sep=1pt,label={above:$\func{F}^{1}$}] {} 
  (0);
\draw[->] 
  (5) --
    node[inner sep=1pt,label={[xshift=-8pt,yshift=-7pt]above:$\func{F}^{2}$}] {} 
  (0);
\draw[->] 
  (6) --
    node[inner sep=1pt,label={[xshift=10pt,yshift=-7pt]above:$\func{F}^{3}$}] {} 
  (0);
\end{tikzpicture}
\]

\begin{enumerate}[label={\normalfont(\roman*)}]
%F1
\item The functor $\func{F}^{1}\colon\repSk\to\repFk$. Given a representation 
\[
V=(V_{1},V_{2},V_{3},V_{4},f_{\alpha},f_{\beta},f_{\gamma},f_{\delta})
\]
of $\carc{S}$ over $k$ we define $\func{F}^{1}(V)$ as follows. Let $V_{0}=V_{1}\oplus V_{2}$ and denote by $\iota_{1}\colon V_{1}\to V_{0}$ and $\iota_{2}\colon V_{2}\to V_{0}$ the canonical inclusions. We then set
\[
\func{F}^{1}(V)=
  (V_{0},V_{1},V_{2},V_{3},V_{4},
  \iota_{1},\iota_{2},\bar{f}_{\gamma},\bar{f}_{\delta}),
\]
where 
\[
\bar{f}_{\gamma}
=\iota_{1}f_{\alpha} + \iota_{2}f_{\beta}\quad\text{and}\quad
\bar{f}_{\delta}
=\iota_{1}f_{\gamma} + \iota_{2}f_{\delta}.
\]
For a morphism $l=(l_{1},l_{2},l_{3},l_{4})$ from $V$ to $W$ in $\repSk$, the functor assigns 
\[
\func{F}^{1}(l)=
(l_{1}\oplus l_{2},l_{1},l_{2},l_{3},l_{4})
\colon \func{F}^{1}(V)\to  \func{F}^{1}(W).
\] 

% F2
\item The functor $\func{F}^{2}\colon\repDk\to\repFk$. Let
\[
V=(V_{1},V_{2},V_{3},f_{\alpha},f_{\beta},f_{\gamma})
\]
be a representation of $\carc{D}$ over $k$. As before, write $V_{0}=V_{1}\oplus V_{2}$ and denote the canonical inclusions by $\iota_{1}\colon V_{1}\to V_{0}$ and $\iota_{2}\colon V_{2}\to V_{0}$. We define
\[
\func{F}^{2}(V)=
  (V_{0},V_{1},V_{2},V_{3},V_{2},
  \iota_{1},\iota_{2},\bar{f}_{\gamma},\bar{f}_{\delta}),
\]
where 
\[
\bar{f}_{\gamma}
=\iota_{1}f_{\alpha} + \iota_{2}f_{\beta} 
\quad\text{and}\quad
\bar{f}_{\delta}
=\iota_{1}f_{\gamma} + \iota_{2}.
\]
If $l=(l_{1},l_{2},l_{3})$ is a morphism from $V$ to $W$ in $\repDk$, we set 
\[
\func{F}^{2}(l)=
(l_{1}\oplus l_{2},l_{1},l_{2},l_{3},l_{2})
\colon \func{F}^{2}(V)\to  \func{F}^{2}(W).
\] 

% F3
\item The functor $\func{F}^{3}\colon\repKk\to\repFk$. Consider a representation 
\[
V=(V_{1},V_{2},f_{\alpha},f_{\beta})
\]
of $\carc{K}$ over $k$ we define $\func{F}^{3}(V)$ as follows. Setting again $V_{0}=V_{1}\oplus V_{2}$ with canonical inclusions $\iota_{1}\colon V_{1}\to V_{0}$ and $\iota_{2}\colon V_{2}\to V_{0}$, we take
\[
\func{F}^{3}(V)=
  (V_{0},V_{1},V_{2},V_{2},V_{2},
  \iota_{1},\iota_{2},\bar{f}_{\gamma},\bar{f}_{\delta}),
\]
where 
\[
\bar{f}_{\gamma}
=\iota_{1}f_{\alpha} + \iota_{2} \quad\text{and}\quad
\bar{f}_{\delta}
=\iota_{1}f_{\beta} + \iota_{2}.
\]
Given a morphism $l=(l_{1},l_{2})$ from $V$ to $W$ in $\repKk$, the induced morphism is
\[
\func{F}^{3}(l)=
(l_{1}\oplus l_{2},l_{1},l_{2},l_{2},l_{2})
\colon \func{F}^{3}(V)\to  \func{F}^{3}(W).
\] 

% F4
\item The functor $\func{F}^{4}\colon\repCk\to\repFk$. For a representation 
\[
V=(V_{1},V_{2},f_{\alpha},f_{\beta})
\]
of $\carc{C}$, let $V_{0}=V_{1}\oplus V_{2}$ and denote the canonical inclusions by $\iota_{1}$ and $\iota_{2}$. The representation $\func{F}^{4}(V)$ is defined by
\[
\func{F}^{4}(V)=
  (V_{0},V_{1},V_{2},V_{1},V_{2},
  \iota_{1},\iota_{2},\bar{f}_{\gamma},\bar{f}_{\delta}),
\]
with
\[
\bar{f}_{\gamma}
=\iota_{1} + \iota_{2}f_{\alpha} \quad\text{and}\quad
\bar{f}_{\delta}
=\iota_{1}f_{\beta} + \iota_{2}.
\]
For a morphism $l=(l_{1},l_{2})$ from $V$ to $W$ in $\repCk$, we set 
\[
\func{F}^{4}(l)=
(l_{1}\oplus l_{2},l_{1},l_{2},l_{1},l_{2})
\colon \func{F}^{4}(V)\to  \func{F}^{4}(W).
\] 

%lrel
\item The functor $\func{F}^{5}\colon\lrel\to\repFk$. Let $\rho=(V_{1},R)$ be a linear relation. Put $V_{0}=V_{1}\oplus V_{1}$ and let $\iota_{1}\colon V_{1}\to V_{0}$ and $\iota_{2}\colon V_{1}\to V_{0}$ be the canonical inclusions. We associate to $\rho$ the representation
\[
\func{F}^{5}(\rho)=
( V_{0},V_{1},V_{1},V_{1},R,
\iota_{1},\iota_{2},\iota_{1}+\iota_{2}, j),
\]
where $j\colon R\to V_{0}$ is the natural inclusion.

If $l\colon\rho\to\sigma$ is a morphism from $\rho=(V_{1},R)$ to $\sigma=(W_{1},S)$, then
\[
\func{F}^{5}(l)=
\bigl( 
l\oplus l,l,l,l,\restr{(l\oplus l)}{R}
\bigr)
\colon 
\func{F}^{5}(\rho)\to\func{F}^{5}(\sigma).
\]

%lreltt
\item The functor $\func{F}^{6}\colon\lreltt\to\repFk$. To each object $\rho=(V_{1},V_{2},R_{1},R_{2})$ of $\lreltt$, $\func{F}^{6}$ assigns the representation of $\carc{F}$ given by 
\[
\func{F}^{6}(\rho)=
(V_{0},V_{1},V_{2},R_{1},R_{2},
\iota_{1},\iota_{2},j_{1},j_{2}),
\]
where $V_{0}=V_{1}\oplus V_{2}$, $\iota_{1}\colon V{1}\to V_{0}$ and $\iota_{2}\colon V{2}\to V_{0}$ are the canonical inclusions, and $j_{1}\colon R_{1}\to V_{0}$, $j_{2}\colon R_{2}\to V_{0}$ are inclusions.

Given a morphism 
\[
l=(l_{1},l_{2})\colon \rho=(V_{1},V_{2},R_{1},R_{2})\to \sigma=(W_{1},W_{2},S_{1},S_{2})
\]
in $\lreltt$, the functor assigns the morphism 
\[
\func{F}^{6}(l)=
\bigl(
l_{1}\oplus l_{2},l_{1},l_{2},\restr{(l_{1}\oplus l_{2})}{R_{1}},\restr{(l_{1}\oplus l_{2})}{R_{2}}
\bigr),
\] 
between the corresponding objects in $\repFk$.
\end{enumerate}

\subsection{Basic properties of the functors}
\label{sec:sixfunctors}
In this section, we establish properties of the previously defined functors.
\begin{lema}
\label{lem:first2morphisms}
Let  $X_{1}$, $X_{2}$, $X'_{1}$, and $X'_{2}$, be finite-dimensional \knob{vector} spaces, with canonical inclusions $\iota_{i}\colon X_{i}\to X_{1}\oplus X_{2}$ and $\iota'_{i}\colon X'_{i}\to X'_{1}\oplus X'_{2}$, for $i\in\{1,2\}$. Suppose we are given linear maps $f_{0}$, $f_{1}$, and $f_{2}$ such that the following diagram commutes:
\[
\begin{tikzcd}[ampersand replacement=\&]
X_{1}\arrow[r,"\iota_{1}"]\arrow[d,swap,"f_{1}"] 
  \& X_{1}\oplus X_{2}\arrow[d,swap,"f_{0}"]
  \& X_{2}\arrow[l,swap,"\iota_{2}"]\arrow[d,swap,"f_{2}"]
\\   
X'_{1}\arrow[r,"\iota'_{1}"] 
  \& X'_{1}\oplus X'_{2}
  \& X'_{2}\arrow[l,swap,"\iota'_{2}"]
\end{tikzcd}
\]
Then $f_{0}=f_{1}\oplus f_{2}$.
\end{lema}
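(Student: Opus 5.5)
The plan is to use the universal property of the direct sum (as a coproduct) in the category of vector spaces. A linear map out of $X_{1}\oplus X_{2}$ is uniquely determined by its composites with the canonical inclusions $\iota_{1}$ and $\iota_{2}$. So it suffices to show that $f_{0}$ and $f_{1}\oplus f_{2}$ agree after precomposition with each $\iota_{i}$.

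The first step is to recall that, by definition, $f_{1}\oplus f_{2}\colon X_{1}\oplus X_{2}\to X'_{1}\oplus X'_{2}$ is the map $(x_{1},x_{2})\mapsto (f_{1}(x_{1}),f_{2}(x_{2}))$. Equivalently, it is the unique map satisfying
\[
(f_{1}\oplus f_{2})\iota_{1}=\iota'_{1}f_{1}
\quad\text{and}\quad
(f_{1}\oplus f_{2})\iota_{2}=\iota'_{2}f_{2}.
\]
The second step reads off from the commutativity of the two squares in the hypothesis that
\[
f_{0}\iota_{1}=\iota'_{1}f_{1}
\quad\text{and}\quad
f_{0}\iota_{2}=\iota'_{2}f_{2}.
\]
Hence $f_{0}\iota_{i}=(f_{1}\oplus f_{2})\iota_{i}$ for $i\in\{1,2\}$.

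To conclude, I would argue elementwise to keep it concrete. Every $x\in X_{1}\oplus X_{2}$ can be written as $x=\iota_{1}(x_{1})+\iota_{2}(x_{2})$, so linearity gives
\[
f_{0}(x)=\iota'_{1}f_{1}(x_{1})+\iota'_{2}f_{2}(x_{2})=(f_{1}(x_{1}),f_{2}(x_{2}))=(f_{1}\oplus f_{2})(x).
\]
Equivalently, $\iota_{1}\pi_{1}+\iota_{2}\pi_{2}=1_{X_{1}\oplus X_{2}}$ gives $f_{0}=f_{0}\iota_{1}\pi_{1}+f_{0}\iota_{2}\pi_{2}$, and this is $f_{1}\oplus f_{2}$.

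There is no real obstacle here. The only point needing care is to fix the convention for the arrows in the diagram: the horizontal maps point into the middle column, so each square expresses precisely $f_{0}\iota_{i}=\iota'_{i}f_{i}$. Finite-dimensionality is not needed for the argument.
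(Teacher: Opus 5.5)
Your proof is correct and is essentially the paper's argument: the paper also writes $(x_{1},x_{2})=\iota_{1}(x_{1})+\iota_{2}(x_{2})$, uses linearity of $f_{0}$ together with the two commuting squares $f_{0}\iota_{i}=\iota'_{i}f_{i}$, and reads off $(f_{1}(x_{1}),f_{2}(x_{2}))$. Your coproduct framing and your remark that finite-dimensionality is not needed are accurate, but they do not change the route.
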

\begin{proof}
Take $x_{i}\in X_{i}$, for $i\in\{1,2\}$. Then
\begin{align*}
f_{0}(x_{1},x_{2}) 
&= f_{0}(\iota_{1}(x_{1}) + \iota_{2}(x_{2})) \\
&= f_{0}\iota_{1}(x_{1}) + f_{0}\iota_{2}(x_{2}) \\
&= \iota'_{1}f_{1}(x_{1}) + \iota'_{2}f_{2}(x_{2}) \\
&= (f_{1}(x_{1}) , f_{2}(x_{2})) \\
&= (f_{1}\oplus f_{2})(x_{1},x_{2}).
\end{align*}
This completes the proof.
\end{proof}

\begin{lema}
\label{lem:ffpreservesiso}
A fully faithful functor $\func{G}\colon\mathcal{E}\to\mathcal{F}$ preserves and reflects isomorphism.
\end{lema}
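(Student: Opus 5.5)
The statement has two halves, preserving and reflecting isomorphisms, and I will treat them separately. Preservation needs only functoriality. Reflection is where fullness and faithfulness come in.

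\emph{Preservation.} Let $u\colon X\to Y$ be an isomorphism in $\mathcal{E}$ with inverse $u^{-1}$. Since $\func{G}$ preserves composition and identities,
\[
\func{G}(u)\func{G}(u^{-1})=\func{G}(uu^{-1})=\func{G}(1_{Y})=1_{\func{G}(Y)}
\quad\text{and}\quad
\func{G}(u^{-1})\func{G}(u)=1_{\func{G}(X)}.
\]
Hence $\func{G}(u)$ is an isomorphism with inverse $\func{G}(u^{-1})$, and in particular $X\simeq Y$ implies $\func{G}(X)\simeq\func{G}(Y)$. This half does not use that $\func{G}$ is fully faithful.

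\emph{Reflection.} I read ``reflects isomorphism'' in both the morphism sense and the object sense, and prove both.

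For morphisms, suppose $u\colon X\to Y$ is such that $\func{G}(u)$ has an inverse $w\colon\func{G}(Y)\to\func{G}(X)$. By fullness, $w=\func{G}(v)$ for some $v\colon Y\to X$. Then
\[
\func{G}(vu)=\func{G}(v)\func{G}(u)=w\,\func{G}(u)=1_{\func{G}(X)}=\func{G}(1_{X}).
\]
Faithfulness gives $vu=1_{X}$, and symmetrically $uv=1_{Y}$, so $u$ is an isomorphism.

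For objects, suppose $\func{G}(X)\simeq\func{G}(Y)$ via some isomorphism $w$. Fullness writes $w=\func{G}(u)$ for some $u\colon X\to Y$, and the morphism case just proved shows $u$ is an isomorphism. Thus $X\simeq Y$.

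No step here is a genuine obstacle. The only point needing care is to produce a preimage of the inverse, using fullness, before invoking faithfulness to compare compositions with identities.
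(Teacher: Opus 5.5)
Your proof is correct and follows the paper's argument. Preservation comes from functoriality, and reflection comes from lifting mutually inverse isomorphisms via fullness and then using faithfulness to show that the lifts compose to identities. The only difference is organizational: the paper proves just the object version, lifting both $f$ and $g$ at once, whereas you first prove that morphisms are reflected and deduce the object version from it, at no extra cost.
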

\begin{proof}
Let $f\colon V\to W$ be an isomorphism with inverse $g\colon W\to V$. Then, 
\begin{gather*}
\func{G}(f)\func{G}(g)
= \func{G}(fg)
= \func{G}(1_{W})
= 1_{\func{G}(W)}
\\\shortintertext{and}
\func{G}(g)\func{G}(f)
= \func{G}(gf)
= \func{G}(1_{V})
= 1_{\func{G}(V)},
\end{gather*}
so $\func{G}$ preserves isomorphisms. Let us now assume that $\func{G}V\simeq \func{G}W$. We then have mutually inverse isomorphisms $f\colon \func{G}V\to \func{G}W$ and $g\colon \func{G}W\to \func{G}V$. Since $\func{G}$ is full, there exist morphisms $f'\colon V\to W$ and $g'\colon W\to V$ such that $\func{G}(f')=f$ and $\func{G}(g')=g$. Furthermore, 
\[
\func{G}(1_{W})=
1_{\func{G}(W)}=
fg=
\func{G}(f')\func{G}(g')=
\func{G}(f'g')
\]
and, since $\func{G}$ is faithful, we get $1_{W}=f'g'$. In a similar way, one sees that $1_{V}=g'f'$, so $f'$ and $g'$ are mutually inverse morphisms and $V\simeq W$.
\end{proof}
In the following theorem, we will examine the main properties of the six functors we defined in the previous section.
\begin{theo}
\label{teo:propertiesoffunctors}
Functors $\func{F}^{1}$, $\func{F}^{2}$, $\func{F}^{3}$, $\func{F}^{4}$, $\func{F}^{5}$, and $\func{F}^{6}$ are fully faithful.
\end{theo}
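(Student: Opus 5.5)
We prove faithfulness and fullness separately for each $\func{F}^{i}$, after a preliminary check that each $\func{F}^{i}$ is a well-defined functor. That check amounts to verifying that the proposed maps commute with the four arrows of $\carc{F}$. For instance, for $\func{F}^{1}$ one checks $(l_{1}\oplus l_{2})\bar f_{\gamma}=\bar g_{\gamma}l_{3}$, which is exactly $l_{1}f_{\alpha}=g_{\alpha}l_{3}$ together with $l_{2}f_{\beta}=g_{\beta}l_{3}$. For $\func{F}^{5}$ and $\func{F}^{6}$ the condition $xRy\Rightarrow l(x)Sl(y)$ is precisely what makes $\restr{(l\oplus l)}{R}$ land in $S$.

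\textbf{Faithfulness.} This is immediate in every case. The components of $\func{F}^{i}(l)$ at the vertices $1,2$ of $\carc{F}$ (and, for $\func{F}^{1},\func{F}^{2}$, also at vertex $3$, and at $4$ for $\func{F}^{1}$) list every component of $l$. Hence $\func{F}^{i}(l)=\func{F}^{i}(l')$ forces $l=l'$. For $\func{F}^{5}$, the component at vertex $1$ is $l$ itself.

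\textbf{Fullness.} Take a morphism $h=(h_{0},h_{1},h_{2},h_{3},h_{4})\colon\func{F}^{i}(V)\to\func{F}^{i}(W)$. Since the arrows $\alpha,\beta$ are the canonical inclusions in every case, Lemma~\ref{lem:first2morphisms} gives $h_{0}=h_{1}\oplus h_{2}$. The remaining work is to read off the constraints imposed by the arrows $\gamma,\delta$, splitting each commutativity relation into its two coordinates.

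For $\func{F}^{1}$, the relations $(h_{1}\oplus h_{2})(\iota_{1}f_{\alpha}+\iota_{2}f_{\beta})=(\iota_{1}g_{\alpha}+\iota_{2}g_{\beta})h_{3}$ and their $\delta$-analogues say exactly that $(h_{1},h_{2},h_{3},h_{4})$ is a morphism in $\repSk$.

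For $\func{F}^{2}$, the $\delta$-relation with $\bar f_{\delta}=\iota_{1}f_{\gamma}+\iota_{2}$ gives in its second coordinate $h_{2}=h_{4}$, and in its first coordinate $h_{1}f_{\gamma}=g_{\gamma}h_{4}=g_{\gamma}h_{2}$.

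For $\func{F}^{3}$, the second coordinates of both relations give $h_{3}=h_{2}=h_{4}$, and the first coordinates give the Kronecker conditions.

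For $\func{F}^{4}$, the first coordinate of the $\gamma$-relation gives $h_{3}=h_{1}$ and its second coordinate gives $h_{2}f_{\alpha}=g_{\alpha}h_{3}=g_{\alpha}h_{1}$. Symmetrically, the $\delta$-relation gives $h_{4}=h_{2}$ and $h_{1}f_{\beta}=g_{\beta}h_{2}$.

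In each of these four cases, $h=\func{F}^{i}(l)$ for the evident $l$.

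For $\func{F}^{5}$ and $\func{F}^{6}$, commutativity with the inclusions $j$ (resp.\ $j_{1},j_{2}$) forces $h_{3}$ (resp.\ $h_{3},h_{4}$) to be the restriction of $h_{0}=h_{1}\oplus h_{2}$. In particular $h_{0}$ maps $R_{i}$ into $S_{i}$, which is exactly the morphism condition in $\lreltt$, and $h=\func{F}^{6}(h_{1},h_{2})$.

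\textbf{Main obstacle.} The only step with real content is $\func{F}^{5}$, because vertices $1$ and $2$ a priori carry two different maps $h_{1},h_{2}$, while a morphism in $\lrel$ is a single map $l$. Here we use the vertex $3$ with arrow $\iota_{1}+\iota_{2}$. The relation $(h_{1}\oplus h_{2})(\iota_{1}+\iota_{2})=(\iota_{1}+\iota_{2})h_{3}$ reads $(h_{1}x,h_{2}x)=(h_{3}x,h_{3}x)$, so $h_{1}=h_{2}=h_{3}=:l$. The $R$-relation then says $(l\oplus l)(R)\subseteq S$, that is, $l$ is a morphism $\rho\to\sigma$, and $h=\func{F}^{5}(l)$. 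This completes the plan for all six functors.
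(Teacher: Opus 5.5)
Your proposal is correct and follows the same route as the paper: faithfulness comes from reading off the components of $\func{F}^{i}(l)$, and fullness from using Lemma~\ref{lem:first2morphisms} to get $h_{0}=h_{1}\oplus h_{2}$ and then splitting the $\gamma$- and $\delta$-relations coordinatewise. The paper writes out only $\func{F}^{1}$ and calls the rest analogous, so your explicit treatment of the other cases simply fills in what it leaves to the reader; this includes using the diagonal arrow $\iota_{1}+\iota_{2}$ to force $h_{1}=h_{2}=h_{3}$ for $\func{F}^{5}$, where (a small index slip in your general paragraph) it is $h_{4}$, not $h_{3}$, that the inclusion $j$ pins down as a restriction of $h_{0}$.
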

\begin{proof}
We prove the claim for $\func{F}^{1}$. The proofs for the remaining cases are analogous. 

Let $V$ and $W$ be arbitrary objects  in $\repSk$. Consider the induced map
\[
\func{F}^{1}_{V,W}\colon
\Hom_{\repSk}(V,W) \to
\Hom_{\repFk}(\func{F}^{1}(V),\func{F}^{1}(W)).
\]
\textbf{Faithfulness.} Let $l=(l_{1},l_{2},l_{3},l_{4})$ and $l'=(l'_{1},l'_{2},l'_{3},l'_{4})$. If $\func{F}^{1}_{V,W}(l)=\func{F}^{1}_{V,W}(l')$, equality of components implies $l_{i}=l'_{i}$, for all $i\in\{1,2,3,4\}$. Hence $l=l'$ and therefore $\func{F}^{1}$ is faithful. 

\textbf{Fullness.} Let 
\[
h=(h_{0},h_{1},h_{2},h_{3},h_{4})\colon
\func{F}^{1}(V)\to \func{F}^{1}(W).
\]
be a morphism in $\repFk$. From the commutativity of the defining diagram we obtain
\[
h_{0}\iota_{1}=\iota'_{1}h_{1},\qquad
h_{0}\iota_{2}=\iota'_{2}h_{2},\qquad
h_{0}\bar{f}_{\gamma}=\bar{g}_{\gamma}h_{3},
\qquad\text{and}\qquad
h_{0}\bar{f}_{\delta}=\bar{g}_{\delta}h_{4}.
\]
From the first two equalities and Lemma~\ref{lem:first2morphisms}, we get $h_{0}= h_{1}\oplus h_{2}$. Substituing this into the third equality yields
\begin{align}
\begin{split}
\label{equ:surjectivea}
\iota'_{1}(h_{1}f_{\alpha}) + \iota'_{2}(h_{2}f_{\beta})
&= (\iota'_{1}h_{1}\pi_{1} + \iota'_{2}h_{2}\pi_{2})
(\iota_{1}f_{\alpha} + \iota_{2}f_{\beta}) \\
&= (h_{1}\oplus h_{2})(\iota_{1}f_{\alpha} + \iota_{2}f_{\beta}) \\
&= h_{0}\bar{f}_{\gamma} = \bar{g}_{\gamma}h_{3} \\
&= (\iota'_{1}g_{\alpha} + \iota'_{2}g_{\beta})h_{3} \\
&= \iota'_{1}(g_{\alpha}h_{3}) + \iota'_{2}(g_{\beta}h_{3}).
\end{split}
\end{align}
Analogously, from the fourth equality, we get
\begin{equation}
\label{equ:surjectiveb}
\iota'_{1}(h_{1}f_{\gamma}) + \iota'_{2}(h_{2}f_{\delta})
= \iota'_{1}(g_{\gamma}h_{4}) + \iota'_{2}(g_{\delta}h_{4}).
\end{equation}
From~\eqref{equ:surjectivea} and~\eqref{equ:surjectiveb}, we get  
\[
h_{1}f_{\alpha} = g_{\alpha}h_{3},\qquad
h_{2}f_{\beta} = g_{\beta}h_{3},\qquad
h_{1}f_{\gamma} = g_{\gamma}h_{4},\quad\text{and}\quad
h_{2}f_{\delta} = g_{\delta}h_{4}.
\]
Thus $h'=(h_{1},h_{2},h_{3},h_{4})$ defines a morphism $h'\colon V\to W$ in $\repSk$. Since clearly $\func{F}^{1}(h')=h$, we conclude that $\func{F}^{1}_{V,W}$ is surjective and therefore $\func{F}^{1}$ is full.
\end{proof}
\begin{rema}
In Theorem~\ref{teo:propertiesoffunctors} we proved that functors $\func{F}^{1}$, $\func{F}^{2}$, $\func{F}^{3}$, $\func{F}^{4}$, $\func{F}^{5}$, and $\func{F}^{6}$ are fully faithful. They are \emph{not}, however, dense. Let us assume, by contradiction, that $\func{F}^{i}$, for $i\in\{1,2,3,4,5,6\}$, is dense. With reference to the list in Figure~\ref{fig:FSPsolution}, if we consider an indecomposable representation $U$ of type $\mathrm{V}$, the density of $\func{F}^{i}$ would imply that $U\simeq \mathscr{F}^{i}(V)$, for some object $V$ in the domain category of $\func{F}^{i}$, where $V$ would necessarily be an indecomposable object. Therefore, $V$ would have a matrix presentation which should have one of the forms listed in Proposition~\ref{pro:bijections}. This is absurd, since $\mathscr{F}^{i}$ transforms all indecomposable objects to indecomposable representations of types which not include type $\mathrm{V}$, and different types are mutually non-isomorphic. 
\end{rema}
\begin{lema}
\label{lem:ffadditive}
Let $\func{G}\colon\mathcal{E}\to\mathcal{F}$ be a fully faithful functor between additive categories. Then $\func{G}$ is additive if and only if $\func{G}$ preserves zero objects.
\end{lema}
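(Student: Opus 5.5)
The forward implication is the standard one. If $\func{G}$ is additive, then $\func{G}$ induces group homomorphisms $\Hom_{\mathcal{E}}(V,W)\to\Hom_{\mathcal{F}}(\func{G}V,\func{G}W)$. For a zero object $0$ of $\mathcal{E}$ we have $1_{0}=0$ in the ring $\Endom(0)$, so
\[
1_{\func{G}(0)}=\func{G}(1_{0})=\func{G}(0)=0.
\]
An object whose identity is the zero morphism is a zero object, so $\func{G}(0)$ is a zero object of $\mathcal{F}$.

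For the converse, I would use the characterization of additivity through biproducts. A functor between additive categories is additive if and only if it preserves biproducts, meaning it sends biproduct diagrams to biproduct diagrams. Indeed, the sum $f+g$ of $f,g\colon V\to W$ can be written as $\nabla\circ(f\oplus g)\circ\Delta$, and preserving biproducts forces $\func{G}(f+g)=\func{G}(f)+\func{G}(g)$. So the plan is the following.

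First, I show that $\func{G}$ sends zero morphisms to zero morphisms. Every zero morphism $V\to W$ factors through a zero object $0$ of $\mathcal{E}$, and by hypothesis $\func{G}(0)$ is a zero object. Hence $\func{G}$ of a zero morphism factors through a zero object, so it is zero.

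Second, take a biproduct diagram $V_{1}\overset{\iota_{1}}{\underset{\pi_{1}}{\rightleftarrows}}V_{1}\oplus V_{2}\overset{\iota_{2}}{\underset{\pi_{2}}{\leftrightarrows}}V_{2}$ in $\mathcal{E}$, with $\pi_{i}\iota_{i}=1$, $\pi_{j}\iota_{i}=0$ for $i\neq j$, and $\iota_{1}\pi_{1}+\iota_{2}\pi_{2}=1$. Applying $\func{G}$ preserves the first two kinds of identities, using functoriality and the first step. The remaining identity $\func{G}(\iota_{1})\func{G}(\pi_{1})+\func{G}(\iota_{2})\func{G}(\pi_{2})=1$ is where I expect the main obstacle, since we cannot yet push sums through $\func{G}$.

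To handle it, I use faithfulness and fullness. Set $e=\func{G}(\iota_{1})\func{G}(\pi_{1})+\func{G}(\iota_{2})\func{G}(\pi_{2})$, an endomorphism of $\func{G}(V_{1}\oplus V_{2})$. By fullness, $e=\func{G}(u)$ for some endomorphism $u$ of $V_{1}\oplus V_{2}$. Then
\[
\func{G}(\pi_{i}u)=\func{G}(\pi_{i})\,e=\func{G}(\pi_{i}),
\]
because composition in $\mathcal{F}$ is bilinear and the cross terms vanish by the first step. Faithfulness gives $\pi_{i}u=\pi_{i}$ for $i=1,2$, and therefore $u=(\iota_{1}\pi_{1}+\iota_{2}\pi_{2})u=1$, so $e=1$. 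Thus $\func{G}$ preserves biproducts.

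Finally, additivity follows. For $f,g\colon V\to W$, write $f+g=(f,g)^{\mathrm{row}}\circ(1,1)^{\mathrm{col}}$ through $W\oplus W$ and $V\oplus V$ using the biproduct maps. Applying $\func{G}$ and expanding with the preserved biproduct identities gives $\func{G}(f+g)=\func{G}(f)+\func{G}(g)$.

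A shortcut for this last step is to write $f+g=\nabla(f\oplus g)\Delta$ and then check the components of $\func{G}(f+g)$ directly via the projections and injections. Alternatively, one can apply the same full-and-faithful trick to the morphism $\func{G}(f)+\func{G}(g)$ itself.
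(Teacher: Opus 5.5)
Your proposal is correct and follows essentially the same route as the paper: show that $\func{G}$ preserves zero morphisms, use fullness to lift $\func{G}(\iota_1)\func{G}(\pi_1)+\func{G}(\iota_2)\func{G}(\pi_2)$ to an endomorphism $u$ of $V_1\oplus V_2$, and use faithfulness to conclude that $u=1_{V_1\oplus V_2}$. The differences are cosmetic: you compose with the projections and finish with $\iota_1\pi_1+\iota_2\pi_2=1$, whereas the paper composes with the injections and invokes the universal property of the coproduct; and you check directly that preserving biproducts gives additivity, whereas the paper cites Freyd for that step.
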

\begin{proof}
$\Rightarrow$) Since $\func{G}$ is additive, for any pair of objects $U$ and $V$ in $\mathcal{E}$, the induced map
\[
\func{G}_{U,V}\colon
\Hom_{\mathcal{E}}(U,V)
\to
\Hom_{\mathcal{F}}(\func{G}U,\func{G}V)
\]
is a group homomorphism and this implies that $\func{G}_{U,V}(0)=0$, for every zero morphism. Now, let $Z$ denote the zero object in $\mathcal{E}$. We have that $1_{Z}=0_{Z}$, where $0_{Z}$ is the zero endomorphism from $Z$ to $Z$, and from here we get $1_{\func{G}Z}=\func{G}1_{Z}=\func{G}0_{Z}=0_{\func{G}Z}$, so $\func{G}Z$ is the zero object in $\mathcal{F}$.

$\Leftarrow$)  Since $\func{G}$ preserves zero objects, it also preserves zero morphisms. Now, let us consider a pair of objects $U$ and $V$ from $\mathcal{E}$ and their biproduct $(U\oplus V,p_{1},p_{2},\iota_{1},\iota_{2})$ with projections $p_{1}$, $p_{2}$ and injections $\iota_{1}$, $\iota_{2}$. We then have
\begin{equation}
\begin{aligned}
\label{ecu:ffai}
p_{1}\iota_{1} &= 1_{U}, 
&
p_{2}\iota_{2} &= 1_{V}, 
&&&
p_{1}\iota_{2} &= 0, 
&
p_{2}\iota_{1} &= 0, 
&
&\text{and}\\
&&&& \makebox[0pt]{$\iota_{1}p_{1}  + \iota_{2}p_{2} = 1_{U\oplus V}$.}
\end{aligned}
\end{equation}
Since $\func{G}$ is a functor,  with the help of \eqref{ecu:ffai} and the fact that $\func{G}$ preserves zero morphisms we get
\begin{equation}
\begin{aligned}
\label{ecu:ffa1}
\func{G}(p_{1})\func{G}(\iota_{1}) 
&= 1_{\func{G}U},
&
\func{G}(p_{2})\func{G}(\iota_{2}) 
&= 1_{\func{G}V} \\
\func{G}(p_{1})\func{G}(\iota_{2}) 
&= 0,
& 
\func{G}(p_{2})\func{G}(\iota_{1}) 
&= 0.
\end{aligned}
\end{equation}
Now, consider the morphism
\[
\func{G}(\iota_{1})\func{G}(p_{1})  + \func{G}(\iota_{2})\func{G}(p_{2}),
\]
which belongs to $\Hom_{\mathcal{F}}(\func{G}(U\oplus V),\func{G}(U\oplus V))$. Since $\func{G}$ is full, there exists a morphism $h\in\Hom_{\mathcal{E}}(U\oplus V,U\oplus V)$ such that 
\begin{equation}
\label{ecu:ffa3}
\func{G}(h)=
\func{G}(\iota_{1})\func{G}(p_{1})  + \func{G}(\iota_{2})\func{G}(p_{2})=
\func{G}(\iota_{1}p_{1})  + \func{G}(\iota_{2}p_{2}).
\end{equation}
A direct calculation shows that
\begin{align*}
\func{G}(h\iota_{1}) &=
\func{G}(h)\func{G}(\iota_{1}) \\
&= \bigl( \func{G}(\iota_{1}p_{1})  + \func{G}(\iota_{2}p_{2})\bigr)\func{G}(\iota_{1}) \\\
&=\func{G}(\iota_{1}p_{1}\iota_{1})  + \func{G}(\iota_{2}p_{2}\iota_{1}) \\
&= \func{G}(\iota_{1}1_{U}) + \func{G}(0) \\
&= \func{G}(\iota_{1}).
\end{align*}
Analogously, $\func{G}(h\iota_{2}) = \func{G}(\iota_{2})$. Since $\func{G}$ is faithful, this implies $h\iota_{1}=\iota_{1}$ and $h\iota_{2}=\iota_{2}$. This means that both morphisms $1_{U\oplus V}$ and $h$ make the following coproduct diagram commute:
\[
\begin{tikzcd}[ampersand replacement=\&,row sep=large]
U\ar[dr,"\iota_{1}"]
  \ar[ddr,swap,"\iota_{1}",bend right=50] 
  \&\& 
  V\ar[dl,swap,"\iota_{2}"]
  \ar[ddl,"\iota_{2}",bend left=50]
\\ 
\& 
  U\oplus V
  \ar[d,shift left=3pt,"h"]
  \ar[d,shift right=3pt,swap,"1_{U\oplus V}"] 
  \& \\
\& U\oplus V \&
\end{tikzcd}
\]
and, by the universal property of the coproduct, we conclude $h=1_{U\oplus V}$. From this and \eqref{ecu:ffa3}, we obtain
\[
1_{\func{G}(U\oplus V)}=
\func{G}(1_{U\oplus V})=
\func{G}(h)=
\func{G}(\iota_{1})\func{G}(p_{1})  + \func{G}(\iota_{2})\func{G}(p_{2})
\]
which, with equalities~\eqref{ecu:ffa1}, show that 
\[
(\func{G}(U\oplus V),\func{G}(p_{1}),\func{G}(p_{2}),\func{G}(\iota_{1}),\func{G}(\iota_{2}))
\] 
is a biproduct for $\func{G}(U)$ and $\func{G}(V)$ (in particular, $\func{G}(U\oplus V)\simeq\func{G}(U)\oplus \func{G}(V)$). We have thus established that $\func{G}$ preserves biproducts and this implies that it is additive. See~\textcite[Theorem~3.11, Chapter 3]{Fre1964}.
\end{proof}
\begin{theo}
\label{teo:functorsareadditive}
Functors $\func{F}^{1}$, $\func{F}^{2}$, $\func{F}^{3}$, $\func{F}^{4}$, $\func{F}^{5}$, and $\func{F}^{6}$ are additive.
\end{theo}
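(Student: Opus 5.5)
The plan is to deduce additivity from Lemma~\ref{lem:ffadditive}, so I would not verify additivity on Hom-groups directly. By Theorem~\ref{teo:propertiesoffunctors}, each $\func{F}^{i}$ is fully faithful. All six source categories and $\repFk$ are additive:
\begin{itemize}
\item $\repSk$, $\repDk$, $\repKk$ and $\repCk$ are categories of quiver representations;
\item $\lrel$ and $\lreltt$ are additive by Theorem~\ref{teo:lreliskrull} and its analogue for pairs.
\end{itemize}
By Lemma~\ref{lem:ffadditive}, it therefore suffices to check that each $\func{F}^{i}$ sends a zero object to a zero object of $\repFk$.

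This check is a direct inspection of the definitions in Section~\ref{sec:functorsdef}.
\begin{itemize}
\item The zero object of $\repSk$ is $(0,0,0,0,0,0,0,0)$. Then $V_{0}=0\oplus 0=0$, and $\func{F}^{1}$ of it has all vertex spaces zero, so it is the zero representation of $\carc{F}$.
\item For $\func{F}^{2}$, $\func{F}^{3}$ and $\func{F}^{4}$, every vertex of the image is one of $V_{1}$, $V_{2}$, $V_{3}$ or $V_{1}\oplus V_{2}$. All of these vanish when the input is zero.
\item For $\func{F}^{5}$, the zero object of $\lrel$ is $(0,R)$ with $R\subseteq 0\oplus 0$, so $R=0$. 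The image $(0\oplus 0,0,0,0,R,\dots)$ is therefore zero.
\item For $\func{F}^{6}$, the zero object is $(0,0,0,0)$. Again each $R_{i}\subseteq 0$, so every vertex space of $\func{F}^{6}$ of it is zero.
\end{itemize}
In every case the maps attached to the arrows are maps between zero spaces, so each image is the zero object of $\repFk$.

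One point needs care: the zero objects of $\lrel$ and $\lreltt$ must be identified correctly. I would confirm that $(0,0)$ is both initial and terminal in $\lrel$, since the only morphism from or to it is the pair of zero maps, and that condition is vacuous. The same holds for $(0,0,0,0)$ in $\lreltt$. No step is a genuine obstacle, because all the substantive work is contained in Lemma~\ref{lem:ffadditive} and Theorem~\ref{teo:propertiesoffunctors}.

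As a sanity check, one could also observe directly that each $\func{F}^{i}$ acts on morphisms componentwise by $l\mapsto(l_{1}\oplus l_{2},\dots)$ or by restrictions of $l_{1}\oplus l_{2}$. All of these are $k$-linear in $l$, which confirms additivity independently of the lemma.
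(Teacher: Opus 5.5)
Your proposal is correct and follows the paper's proof: full faithfulness from Theorem~\ref{teo:propertiesoffunctors}, preservation of zero objects by inspection of the definitions, and then Lemma~\ref{lem:ffadditive}. Your closing remark is also valid and would stand alone as a complete, more elementary proof that does not need the lemma: each $\func{F}^{i}$ acts on morphisms by expressions linear in $l$ (direct sums of components and restrictions of $l_{1}\oplus l_{2}$), so the Hom-maps are group homomorphisms.
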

\begin{proof}
By Theorem~\ref{teo:propertiesoffunctors},
 each functor is fully faithful. Moreover, from the definition it is clear that the zero object of the domain category is sent to the zero object of $\repFk$. Hence, by Lemma~\ref{lem:ffadditive}, each functor is additive.
\end{proof}
For a given representation 
\[
V=(V_{0},V_{1},V_{2},V_{3},V_{4},
  f_{\alpha},f_{\beta},f_{\gamma},f_{\delta})
\]
in $\repFk$, denote by $\eta_{V}$ the linear operator $\eta_{V}=f_{\alpha}\oplus f_{\beta}\colon V_{1}\oplus V_{2}\to V_{0}$.
\begin{lema}
\label{lem:etafgamma}
Let
\begin{align*}
V &= (V_{0},V_{1},V_{2},V_{3},V_{4},
  f_{\alpha},f_{\beta},f_{\gamma},f_{\delta})\qquad\text{and} \\
V' &= (V'_{1}\oplus V'_{2},V'_{1},V'_{2},V'_{3},V'_{4},
  \iota_{1},\iota_{2},f'_{\gamma},f'_{\delta})
\end{align*}
be isomorphic representations in $\repFk$, with $\iota_{i}\colon V'_{i}\to V'_{1}\oplus V'_{2}$ the canonical inclusions, for $i\in\{1,2\}$. The following holds:
\begin{enumerate}[label={\normalfont(\alph*)}]
\item\label{ite:eviso} $\eta_{V}$ is an isomorphism if and only if $\eta_{V'´}$ is.

\item\label{ite:fgamma} If $f'_{\gamma}=\iota_{1}A + \iota_{2}B$ 
for some linear maps $A$ and $B$, then
\[
f_{\gamma}
= f_{\alpha}(l_{1}^{-1}Al_{3}) + f_{\beta}(l_{2}^{-1}Bl_{3}).
\]
Analogously, if $f'_{\delta}=\iota_{1}C + \iota_{2}D$ 
for some linear maps $C$ and $D$, then
\[
f_{\delta}
= f_{\alpha}(l_{1}^{-1}Cl_{4}) + f_{\beta}(l_{2}^{-1}Dl_{4}).
\]
\end{enumerate}
\end{lema}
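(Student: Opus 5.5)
We fix an isomorphism $l=(l_{0},l_{1},l_{2},l_{3},l_{4})\colon V\to V'$ in $\repFk$. This is implicit in the statement, since the maps $l_{i}$ appear in part~\ref{ite:fgamma}. Each $l_{i}$ is a linear isomorphism, and the commutativity conditions read
\[
l_{0}f_{\alpha}=\iota_{1}l_{1},\qquad
l_{0}f_{\beta}=\iota_{2}l_{2},\qquad
l_{0}f_{\gamma}=f'_{\gamma}l_{3},\qquad
l_{0}f_{\delta}=f'_{\delta}l_{4}.
\]
The whole proof rests on rewriting these relations; no earlier result is needed beyond the definitions.

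For part~\ref{ite:eviso}, we combine the first two relations. For $(x_{1},x_{2})\in V_{1}\oplus V_{2}$,
\[
l_{0}\eta_{V}(x_{1},x_{2})
=l_{0}f_{\alpha}(x_{1})+l_{0}f_{\beta}(x_{2})
=\iota_{1}l_{1}(x_{1})+\iota_{2}l_{2}(x_{2})
=\eta_{V'}(l_{1}\oplus l_{2})(x_{1},x_{2}).
\]
Hence $l_{0}\eta_{V}=\eta_{V'}(l_{1}\oplus l_{2})$. Since $l_{0}$ and $l_{1}\oplus l_{2}$ are isomorphisms, $\eta_{V}$ is invertible if and only if $\eta_{V'}$ is. (For $V'$ we have $\eta_{V'}=\iota_{1}+\iota_{2}$, which is the identity of $V'_{1}\oplus V'_{2}$, so in fact both maps are isomorphisms; the equivalence is all that is required.)

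For part~\ref{ite:fgamma}, we invert the first two relations to get $\iota_{1}=l_{0}f_{\alpha}l_{1}^{-1}$ and $\iota_{2}=l_{0}f_{\beta}l_{2}^{-1}$. Substituting these into $f'_{\gamma}=\iota_{1}A+\iota_{2}B$ gives
\[
l_{0}f_{\gamma}=f'_{\gamma}l_{3}
=l_{0}f_{\alpha}l_{1}^{-1}Al_{3}+l_{0}f_{\beta}l_{2}^{-1}Bl_{3}.
\]
Cancelling the invertible map $l_{0}$ on the left yields
\[
f_{\gamma}=f_{\alpha}(l_{1}^{-1}Al_{3})+f_{\beta}(l_{2}^{-1}Bl_{3}).
\]
The identity for $f_{\delta}$ follows in exactly the same way, using $l_{4}$ in place of $l_{3}$ and $C,D$ in place of $A,B$.

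The only point needing care is bookkeeping rather than mathematics. The statement does not define the $l_{i}$, so we must make explicit that they are the components of a fixed isomorphism $V\to V'$. We also need to keep $\iota_{i}$ (the inclusions into $V'_{1}\oplus V'_{2}$) distinct from the maps of $V$ when substituting.
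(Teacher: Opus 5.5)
Your proposal is correct and follows essentially the same route as the paper. For part (a) you use the commutative square $l_{0}\eta_{V}=\eta_{V'}(l_{1}\oplus l_{2})$ with invertible vertical maps, and for part (b) you rewrite $l_{0}^{-1}\iota_{1}=f_{\alpha}l_{1}^{-1}$ and $l_{0}^{-1}\iota_{2}=f_{\beta}l_{2}^{-1}$ inside $f_{\gamma}=l_{0}^{-1}f'_{\gamma}l_{3}$; you also rightly make explicit that the $l_{i}$ are the components of a fixed isomorphism $V\to V'$, which the statement leaves implicit.
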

\begin{proof}
\ref{ite:eviso} The commutativity of the diagram for $l$ immediately implies that of the following diagram:
\[
\begin{tikzcd}[ampersand replacement=\&]
V_{1}\oplus V_{2}
  \arrow[r, "\eta_{V}"]\arrow[d,swap,"l_{1}\oplus l_{2}"]
\& 
V_{0}
  \arrow[d, "l_{0}"] 
\\
V'_{1}\oplus V'_{2}
    \arrow[r,swap,"\eta_{V'}"] 
\& 
V'_{1}\oplus V'_{2} \\
\end{tikzcd}
\] 
and from here, since the vertical maps are isomorphisms, $\eta_{V}$ is an isomorphism if and only if $\eta_{V'}$ is.

\ref{ite:fgamma} The commutativity of the diagram defining $l$ yields 
\[
l_{0}f_{\alpha}=\iota_{1}l_{1},
\qquad
l_{0}f_{\beta}=\iota_{2}l_{2},
\qquad\text{and}\qquad
l_{0}f_{\gamma}=f'_{\gamma}l_{3}.
\]
From these equalities we get
\[
f_{\gamma}
=l_{0}^{-1}(\iota_{1}A + \iota_{2}B)l_{3}
= l_{0}^{-1}\iota_{1}Al_{3} + l_{0}^{-1}\iota_{2}Bl_{3} \\
= f_{\alpha}(l_{1}^{-1}Al_{3}) 
  + f_{\beta}(l_{2}^{-1}Bl_{3}). 
\]
The proof  for $f_{\delta}$ is identical. 
\end{proof}
By Theorem~\ref{teo:propertiesoffunctors} and Theorem~\ref{teo:functorsareadditive}, each one of the functors $\func{F}^{1}$, $\func{F}^{2}$, $\func{F}^{3}$, $\func{F}^{4}$, $\func{F}^{5}$, and $\func{F}^{6}$ is additive and fully faithful, so each of the categories $\mathcal{A}_{1}=\repSk$, $\mathcal{A}_{2}=\repDk$, $\mathcal{A}_{3}=\repKk$, $\mathcal{A}_{4}=\repCk$, $\mathcal{A}_{5}=\lrel$, and $\mathcal{A}_{6}=\lreltt$ is equivalent to a full subcategory $\mathscr{C}_{1}$, $\mathscr{C}_{2}$, $\mathscr{C}_{3}$, $\mathscr{C}_{4}$, $\mathscr{C}_{5}$, and $\mathscr{C}_{6}$ of $\repFk$. 

In the following theorem we present an intrinsic characterization of the full subcategories $\mathscr{C}_{i}$, for all $i\in\{1,2,3,4,5,6\}$. Denote by $\obj_{i}$ the collection of objects from the category $\mathscr{C}_{i}$, for all $i\in\{1,2,3,4,5,6\}$.  
\begin{theo}
\label{teo:structural}
With the notation above, we have the following:
\begin{enumerate}[label={\normalfont(\alph*)}]
\item\label{ite:ob1} $V\in\obj_{1}$ if and only if $\eta_{V}$ is an isomorphism.

\item\label{ite:ob2} $V\in\obj_{2}$ if and only if $\eta_{V}$ is an isomorphism and there exist an isomorphism $\theta\colon V_{4}\Iso V_{2}$ and a linear map $\psi\colon V_{4}\to V_{1}$ such that $f_{\delta}=f_{\alpha}\psi + f_{\beta}\theta$.
\item\label{ite:ob3} $V\in\obj_{3}$ if and only if $\eta_{V}$ is an isomorphism and there exist isomorphisms $\zeta\colon V_{3}\Iso V_{2}$ and $\theta\colon V_{4}\Iso V_{2}$ and linear maps $\phi\colon V_{3}\to V_{1}$ and $\psi\colon V_{4}\to V_{1}$ such that 
\[
f_{\gamma}=f_{\alpha}\phi+ f_{\beta}\zeta
\qquad\text{and}\qquad 
f_{\delta}=f_{\alpha}\psi+ f_{\beta}\theta.
\]
\item\label{ite:ob4} $V\in\obj_{4}$ if and only if $\eta_{V}$ is an isomorphism and there exist isomorphisms $\epsilon\colon V_{3}\Iso V_{1}$ and $\theta\colon V_{4}\Iso V_{2}$ and linear maps $\chi\colon V_{3}\to V_{2}$ and $\psi\colon V_{4}\to V_{1}$ such that 
\[
f_{\gamma}=f_{\alpha}\epsilon+ f_{\beta}\chi
\qquad\text{and}\qquad 
f_{\delta}=f_{\alpha}\psi+ f_{\beta}\theta.
\]
\item\label{ite:ob5} $V\in\obj_{5}$ if and only if $\eta_{V}$ is an isomorphism and $f_{\delta}$ is an injective linear operator, and there exist isomorphisms $\epsilon\colon V_{3}\Iso V_{1}$ and $\zeta\colon V_{3}\Iso V_{2}$ such that $f_{\gamma}=f_{\alpha}\epsilon+ f_{\beta}\zeta$.
\item\label{ite:ob6} $V\in\obj_{6}$ if and only if $\eta_{V}$ is an isomorphism and and $f_{\delta}$ and $f_{\gamma}$ are injective linear maps.
\end{enumerate}
\end{theo}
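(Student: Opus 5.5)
Throughout, $\obj_{i}$ denotes the isomorphism-closed essential image of $\func{F}^{i}$, so $V\in\obj_{i}$ means $V\simeq\func{F}^{i}(X)$ for some object $X$ of $\mathcal{A}_{i}$. I will prove each item in two directions.

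\textbf{Necessity.} Assume $V\simeq V'=\func{F}^{i}(X)$ via $l=(l_{0},\dots,l_{4})$. For $V'$ the map $\eta_{V'}=\iota_{1}\oplus\iota_{2}$ is the identity of $V'_{1}\oplus V'_{2}$, so by Lemma~\ref{lem:etafgamma}\ref{ite:eviso} the map $\eta_{V}$ is an isomorphism. The remaining conditions come from Lemma~\ref{lem:etafgamma}\ref{ite:fgamma}, reading off $A,B,C,D$ from the explicit formulas for $\bar f_{\gamma},\bar f_{\delta}$.
\begin{itemize}
\item For $\func{F}^{2}$ we have $D=1_{V_{2}}$, so $\theta=l_{2}^{-1}l_{4}$ is an isomorphism and $\psi=l_{1}^{-1}f_{\gamma}l_{4}$.
\item For $\func{F}^{3}$ and $\func{F}^{4}$ the argument is the same, with the identity blocks sitting in the appropriate positions.
\item For $\func{F}^{5}$, $f'_{\gamma}=\iota_{1}+\iota_{2}$ gives $\epsilon=l_{1}^{-1}l_{3}$ and $\zeta=l_{2}^{-1}l_{3}$. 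Injectivity of $f_{\delta}$ follows from $l_{0}f_{\delta}=j\,l_{4}$, since $j$ is injective and $l_{0},l_{4}$ are isomorphisms.
\item For $\func{F}^{6}$, injectivity of $f_{\gamma}$ and $f_{\delta}$ follows the same way from $j_{1}$ and $j_{2}$.
\end{itemize}

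\textbf{Sufficiency.} Given $V$ satisfying the conditions, I first replace it by an isomorphic representation in the normal form $V'=(V_{1}\oplus V_{2},V_{1},V_{2},V_{3},V_{4},\iota_{1},\iota_{2},\eta_{V}^{-1}f_{\gamma},\eta_{V}^{-1}f_{\delta})$, using $(\eta_{V}^{-1},1,1,1,1)$ as the isomorphism $V\to V'$. Commutativity holds because $\eta_{V}^{-1}f_{\alpha}=\iota_{1}$ and $\eta_{V}^{-1}f_{\beta}=\iota_{2}$. Now every map $V_{j}\to V_{1}\oplus V_{2}$ decomposes uniquely as $\iota_{1}A+\iota_{2}B$. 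Case \ref{ite:ob1} is then immediate: take $X=(V_{1},V_{2},V_{3},V_{4})$ with the four components of $\eta_{V}^{-1}f_{\gamma}$ and $\eta_{V}^{-1}f_{\delta}$, so that $\func{F}^{1}(X)=V'$.

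For the other cases the given identities say, for example in \ref{ite:ob2}, that $\eta_{V}^{-1}f_{\delta}=\iota_{1}\psi+\iota_{2}\theta$. I then transport $V_{4}$ to $V_{2}$ along $\theta$ via the isomorphism $(1,1,1,1,\theta^{-1})$ in the appropriate direction, which turns the $\delta$-map into $\iota_{1}(\psi\theta^{-1})+\iota_{2}$. This equals $\func{F}^{2}(X)$ with $f_{\gamma}=\psi\theta^{-1}$, and $f_{\alpha},f_{\beta}$ the components of the $\gamma$-map. Cases \ref{ite:ob3} and \ref{ite:ob4} work the same way, using $\zeta,\theta$, respectively $\epsilon,\theta$.

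\textbf{The relation cases.} These are the only delicate point.
\begin{itemize}
\item For \ref{ite:ob5}, transport $V_{3}$ to $V_{1}$ via $\epsilon$. The $\gamma$-map becomes $\iota_{1}+\iota_{2}\zeta\epsilon^{-1}$. Then apply the automorphism $1\oplus(\zeta\epsilon^{-1})^{-1}$ on the $V_{2}$ side, with matching components at $0$ and $2$, to identify $V_{2}$ with $V_{1}$; this makes the $\gamma$-map the diagonal $\iota_{1}+\iota_{2}$. Finally, since $f_{\delta}$ is injective, replace $V_{4}$ by its image $R=\Ima(\text{current }\delta\text{-map})\subseteq V_{1}\oplus V_{1}$, via the isomorphism $V_{4}\Iso R$. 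The result is $\func{F}^{5}(V_{1},R)$.
\item For \ref{ite:ob6}, after normalizing, set $R_{1}=\Ima\eta_{V}^{-1}f_{\gamma}$ and $R_{2}=\Ima\eta_{V}^{-1}f_{\delta}$, and use injectivity to get $V'\simeq\func{F}^{6}(V_{1},V_{2},R_{1},R_{2})$.
\end{itemize}

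The main obstacle is bookkeeping: the composite isomorphisms must commute at the vertex $0$ once the identifications of $V_{2}$ with $V_{1}$ (or of $V_{3},V_{4}$) are made, so $l_{0}$ has to be chosen as the corresponding block-diagonal map. Each case reduces to a one-line check of this kind.
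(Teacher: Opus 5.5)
Your proposal is correct and follows the paper's route. For necessity you use Lemma~\ref{lem:etafgamma}\ref{ite:eviso}--\ref{ite:fgamma} with the identity blocks of each $\func{F}^{i}$ read off. For sufficiency you write down an explicit preimage, as the paper does: it uses $W=(V_{1},V_{2},\phi\zeta^{-1},\psi\theta^{-1})$ for~\ref{ite:ob3} and $\rho=(V_{3},R)$ with $R=(\epsilon^{-1}\oplus\zeta^{-1})\eta_{V}^{-1}f_{\delta}(V_{4})$ for~\ref{ite:ob5}. Your normalization by $(\eta_{V}^{-1},1,1,1,1)$ just spells out the ``direct verification'' the paper omits, and basing the relation on $V_{1}$ instead of $V_{3}$ is immaterial.

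One notational slip: in $\psi=l_{1}^{-1}f_{\gamma}l_{4}$ the map $f_{\gamma}$ must be the $\gamma$-arrow of the $\carc{D}$-representation $X$, not that of $V$.
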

\begin{proof}
We present the full proof in the cases~\ref{ite:ob3} and~~\ref{ite:ob5}. The remaining ones are analogous. We consider a representation
$V=(V_{0},V_{1},V_{2},V_{3},V_{4},
f_{\alpha},f_{\beta}f_{\gamma}f_{\delta})$ in $\repFk$.

\ref{ite:ob3} $\Rightarrow)$ Suppose $V\in\obj_{3}$.  Then $V\simeq \func{F}^{3}(W)$, for some $W=(W_{1},W_{2},g_{\alpha},g_{\beta})$
in $\repKk$. Hence there exists an isomorphism  $l=(l_{0},l_{1},l_{2},l_{3},l_{4})\colon V\to \func{F}^{3}(W)$, such that
\[ 
l_{0}f_{\alpha}=\iota_{1}l_{1},\quad
l_{0}f_{\beta}=\iota_{2}l_{2},\quad
l_{0}f_{\gamma}=(\iota_{1}g_{\alpha} + \iota_{2})l_{3},
\quad\text{and}\quad
l_{0}f_{\delta}=(\iota_{1}g_{\beta} + \iota_{2})l_{4}.
\]
Since $\eta_{\func{F}^{3}(W)}=\iota_{1}\oplus \iota_{2}$ is the identity on $W_{1}\oplus W_{2}$, Lemma~\ref{lem:etafgamma}\ref{ite:eviso} implies that $\eta_{V}$ is an isomorphism.

Applying Lemma~\ref{lem:etafgamma}\ref{ite:fgamma} with $A=g_{\alpha}$, $B=1_{V_{2}}$, $C=g_{\beta}$, and $D=1_{V_{2}}$, we obtain 
\[
f_{\gamma}
= f_{\alpha}(l_{1}^{-1}g_{\alpha}l_{3}) + f_{\beta}(l_{2}^{-1}l_{3})
\qquad\text{and}\qquad
f_{\delta}
= f_{\alpha}(l_{1}^{-1}g_{\beta}l_{4}) + f_{\beta}(l_{2}^{-1}l_{4}).
\]
Taking
\[
\phi =l_{1}^{-1}g_{\alpha}l_{3},\qquad
\psi =l_{1}^{-1}g_{\beta}l_{4},\qquad
\zeta =l_{2}^{-1}l_{3},
\qquad\text{and}\qquad
\theta =l_{2}^{-1}l_{4}
\]
yields $f_{\gamma}= f_{\alpha}\phi + f_{\beta}\zeta$ and $f_{\delta}= f_{\alpha}\psi + f_{\beta}\theta$. 

$\Leftarrow)$ Conversely, assume $\eta_{V}$ is an isomorphism and there exist isomorphisms $\zeta\colon V_{3}\Iso V_{2}$, $\theta\colon V_{4}\Iso V_{2}$ and linear maps $\phi\colon V_{3}\to V_{1}$ and $\psi\colon V_{4}\to V_{1}$ such that 
\[
f_{\gamma}=f_{\alpha}\phi+ f_{\beta}\zeta
\qquad\text{and}\qquad 
f_{\delta}=f_{\alpha}\psi+ f_{\beta}\theta.
\]
Let $W=(V_{1},V_{2},h_{\alpha},h_{\beta})$, with $h_{\alpha}=\phi\zeta^{-1}$ and $h_{\beta}=\psi\theta^{-1}$.
A direct verification shows that $V\simeq \func{F}^{3}(W)$. Hence, $V\in\obj_{3}$.

% prueba del caso (e)
\ref{ite:ob5} $\Rightarrow)$ Let $V\in\obj_{5}$. Then $V\simeq \func{F}^{5}(\rho)$, for some linear relation $\rho=(W,R)$ in $\lrel$. As above, an isomorphism $l$ satisfying the defining relations implies that $\eta_{V}$ is an isomorphism.

By Lemma~\ref{lem:etafgamma}\ref{ite:fgamma}, with $A=B=1_{W}$, we get 
\[
f_{\gamma}
= f_{\alpha}(l_{1}^{-1}l_{3}) + f_{\beta}(l_{2}^{-1}l_{3}).
\]
Setting $\zeta =l_{2}^{-1}l_{3}$ and $\theta =l_{2}^{-1}l_{4}$ gives $f_{\gamma}= f_{\alpha}\phi + f_{\beta}\zeta$. Moreover, from $f_{\delta}=l_{0}^{-1}jl_{4}$, we get that $f_{\delta}$ is injective. 

$\Leftarrow)$ Conversely, suppose $\eta_{V}$ is an isomorphism, $f_{\delta}$ is injective, and there exist isomorphisms $\epsilon\colon V_{3}\Iso V_{1}$ and $\zeta\colon V_{3}\Iso V_{2}$ such that $f_{\gamma}=f_{\alpha}\epsilon+ f_{\beta}\zeta$.

Define a linear relation $\rho=(V_{3},R)$, where $R$ is the image of the composition
\[
V_{4}\xrightarrow{f_{\delta}}
V_{0}\xrightarrow{\eta_{V}^{-1}}
V_{1}\oplus V_{2}\xlongrightarrow{\epsilon^{-1}\oplus \zeta^{-1}}
V_{3}\oplus V_{3}.
\]
Applying $\func{F}^{5}$ to $\rho$ and constructing the natural isomorphism $l$ yields $\func{F}^{5}(\rho)\simeq V$. Hence, $V\in\obj_{5}$.
\end{proof}
The structural characterizations given in Theorem~\ref{teo:structural} make it possible to compare the classes $\obj_{i}$ directly. As an immediate consequence, we obtain the following inclusions.
\begin{prop}
\label{pro:inclusions}
\[
\obj_{3},\obj_{4}\subseteq \obj_{2}\subseteq \obj_{1}
\qquad\text{and}\qquad
\obj_{5},\obj_{6}\subseteq\obj_{1}.
\]
\end{prop}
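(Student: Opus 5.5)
The plan is to read off all the inclusions directly from the intrinsic characterizations in Theorem~\ref{teo:structural}. Every condition listed there contains the clause ``$\eta_{V}$ is an isomorphism''. By item~\ref{ite:ob1}, this clause alone characterizes $\obj_{1}$. Hence any $V$ in $\obj_{2}$, $\obj_{3}$, $\obj_{4}$, $\obj_{5}$ or $\obj_{6}$ satisfies the defining condition of $\obj_{1}$. This immediately gives $\obj_{2},\obj_{5},\obj_{6}\subseteq\obj_{1}$, and also $\obj_{3},\obj_{4}\subseteq\obj_{1}$.

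For $\obj_{3}\subseteq\obj_{2}$, take $V\in\obj_{3}$. Item~\ref{ite:ob3} supplies an isomorphism $\theta\colon V_{4}\Iso V_{2}$ and a linear map $\psi\colon V_{4}\to V_{1}$ with $f_{\delta}=f_{\alpha}\psi+f_{\beta}\theta$, and $\eta_{V}$ is an isomorphism. This is exactly the data required in item~\ref{ite:ob2}, so $V\in\obj_{2}$; we simply discard the data $\zeta,\phi$ concerning $f_{\gamma}$.

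For $\obj_{4}\subseteq\obj_{2}$, the argument is the same. Item~\ref{ite:ob4} already contains $\theta\colon V_{4}\Iso V_{2}$ and $\psi\colon V_{4}\to V_{1}$ with $f_{\delta}=f_{\alpha}\psi+f_{\beta}\theta$, together with $\eta_{V}$ being an isomorphism. Again we forget $\epsilon$ and $\chi$.

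There is no real obstacle, since each inclusion amounts to dropping part of a conjunction of conditions. The only point needing care is that the sets $\obj_{i}$ are the classes of objects isomorphic to objects in the essential images. The characterizations in Theorem~\ref{teo:structural} are stated for arbitrary $V$ in $\repFk$ and are invariant under isomorphism, so no further bookkeeping about replacing $V$ by an isomorphic representation is needed.
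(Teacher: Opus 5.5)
Your proof is correct and follows the same route as the paper: each inclusion is read off from Theorem~\ref{teo:structural}, since each of \ref{ite:ob2}--\ref{ite:ob6} contains the condition of \ref{ite:ob1}, and both \ref{ite:ob3} and \ref{ite:ob4} contain the $f_{\delta}$-condition that \ref{ite:ob2} requires. For $\obj_{3}\subseteq\obj_{2}$ you rightly use the data $(\theta,\psi)$ attached to $f_{\delta}$; the paper's written proof cites the data $(\zeta,\phi)$ attached to $f_{\gamma}$ there, which does not literally match \ref{ite:ob2}, so your version is the accurate one.
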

\begin{proof}
By Theorem~\ref{teo:structural}\ref{ite:ob2}--\ref{ite:ob6}, if $V\in\obj_{i}$, for $i\in\{2,3,4,5,6\}$, then $\eta_{V}$ is an isomorphism, hence by Theorem~\ref{teo:structural}\ref{ite:ob1} $V\in\obj_{1}$. 

Furthermore, by Theorem~\ref{teo:structural}\ref{ite:ob3}, if $V\in\obj_{3}$ then $\eta_{V}$ is an isomorphism and and there exist an isomorphism $\zeta\colon V_{3}\Iso V_{2}$ and a linear map $\phi\colon V_{3}\to V_{1}$ such that $f_{\gamma}=f_{\alpha}\phi+ f_{\beta}\zeta$. Hence, by Theorem~\ref{teo:structural}\ref{ite:ob2}, $V\in\obj_{2}$.

Analogously, by Theorem~\ref{teo:structural}\ref{ite:ob4}, if $V\in\obj_{4}$ then $\eta_{V}$ is an isomorphism and and there exist an isomorphism $\theta\colon V_{4}\Iso V_{2}$ and a linear map $\psi\colon V_{4}\to V_{1}$ such that $f_{\delta}=f_{\alpha}\psi+ f_{\beta}\theta$. Hence, by Theorem~\ref{teo:structural}\ref{ite:ob2}, $V\in\obj_{2}$.
\end{proof}
Proposition~\ref{pro:inclusions}
 exhibits a natural hierarchy among the six subcategories $\mathcal{C}_{i}$. In particular, $\mathcal{C}_{1}$ serves as a maximal ambient class containing all others, while $\mathcal{C}_{2}$ occupies an intermediate position capturing the common structure underlying $\mathcal{C}_{3}$ and $\mathcal{C}_{4}$. The inclusions $\obj_{3},\obj_{4}\subseteq \obj_{2}$ reflect the fact that the corresponding problems can be viewed as refinements of the structure encoded in $\obj_{2}$, whereas the subcategories $\mathcal{C}_{5}$ and $\mathcal{C}_{6}$ embed directly into $\mathcal{C}_{1}$, without factoring through $\mathcal{C}_{2}$.

Since the categories $\mathcal{C}_{i}$ are full subcategories of $\repFk$, each one of the inclusions $\obj_{i}\subseteq\obj_{j}$ in Proposition~\ref{pro:inclusions} induces a fully faithful functor $\mathcal{C}_{i}\to\mathcal{C}_{j}$. Transporting these along the equivalences $\mathcal{A}_{i}\simeq\mathcal{C}_{i}$ yields fully faithful functors as illustrated below:
\[
\begin{tikzcd}[ampersand replacement=\&,column sep=0.5em]
\&\&\& \repSk \&\&\& 
\\
\& \repDk\arrow[urr,hook] 
  \&\& \lrel\arrow[u,hook] 
  \&\&\&\& \lreltt\arrow[ullll,hook] 
\\
\repKk\arrow[ur,hook] 
  \& \null \&  \repCk\arrow[ul,hook]
  \& \null \& \null \& \null
\end{tikzcd}
\]
The result in Theorem~\ref{teo:structural} also allows us to deduce additional categorical properties of the categories associated to the six subproblems under consideration, as illustrated in the following result and its corollary.
\begin{prop}
The categories $\mathcal{C}_{1}$, $\mathcal{C}_{2}$, $\mathcal{C}_{3}$, $\mathcal{C}_{4}$, $\mathcal{C}_{5}$, and $\mathcal{C}_{6}$ are closed under extensions. Consequently, each category inherits an exact structure from $\repFk$.
\end{prop}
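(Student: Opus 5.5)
Take a short exact sequence $0\to U\xrightarrow{u} V\xrightarrow{p} W\to 0$ in $\repFk$ with $U,W\in\obj_{i}$, and check directly that $V$ satisfies the intrinsic conditions of Theorem~\ref{teo:structural} for $\obj_{i}$. Exactness is vertexwise, so at each vertex $t\in\{0,1,2,3,4\}$ we have exact sequences $0\to U_{t}\to V_{t}\to W_{t}\to 0$. The whole argument is a sequence of five-lemma and splitting arguments at the level of vector spaces.

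\textbf{Step 1: $\eta_{V}$ is an isomorphism.} The maps $\eta$ are natural: $u_{0}\eta_{U}=\eta_{V}(u_{1}\oplus u_{2})$, and similarly for $p$. Hence we get a commutative diagram with exact rows
\[
0\to U_{1}\oplus U_{2}\to V_{1}\oplus V_{2}\to W_{1}\oplus W_{2}\to 0
\quad\text{over}\quad
0\to U_{0}\to V_{0}\to W_{0}\to 0.
\]
Since $\eta_{U}$ and $\eta_{W}$ are isomorphisms, the five lemma shows that $\eta_{V}$ is one. This settles~\ref{ite:ob1}.

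\textbf{Step 2: injectivity conditions.} These are needed for $\obj_{5}$ and $\obj_{6}$. If $f_{\delta}^{U}$ and $f_{\delta}^{W}$ are injective, then $f_{\delta}^{V}$ is injective by the snake lemma, or by a direct diagram chase: a kernel element of $f_{\delta}^{V}$ maps to $0$ in $W_{4}$, so it comes from $U_{4}$, where $f_{\delta}^{U}$ is injective. The same argument applies to $f_{\gamma}$.

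\textbf{Step 3: factorization conditions.} This is the main obstacle: producing isomorphisms $\theta\colon V_{4}\Iso V_{2}$ and maps $\psi$ with $f_{\delta}=f_{\alpha}\psi+f_{\beta}\theta$. Since $\eta_{V}$ is an isomorphism, any map $f_{\delta}\colon V_{4}\to V_{0}$ can be written uniquely as $f_{\alpha}\psi+f_{\beta}\theta$, namely $(\psi,\theta)=\eta_{V}^{-1}f_{\delta}$. So the only content of the condition is that the component $\theta=\pi_{2}\eta_{V}^{-1}f_{\delta}$ is an isomorphism. By the uniqueness of the decomposition for $U$ and $W$, the given $\theta$'s for $U$ and $W$ are precisely these canonical components. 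By naturality of $\eta^{-1}$, the canonical components form a morphism of short exact sequences from $0\to U_{4}\to V_{4}\to W_{4}\to 0$ to $0\to U_{2}\to V_{2}\to W_{2}\to 0$. The five lemma then makes $\theta_{V}$ an isomorphism. The same reasoning applies to $\zeta$ (the $V_{3}\to V_{2}$ component), to $\epsilon$ (the $V_{3}\to V_{1}$ component), and to $\theta$ in the cases~\ref{ite:ob2}--\ref{ite:ob5}. The free maps $\phi,\psi,\chi$ impose no condition. The subtle point to verify carefully is that ``there exist'' in Theorem~\ref{teo:structural} reduces to a condition on the canonical components. This holds because $f_{\alpha}\oplus f_{\beta}$ is invertible, which forces the decomposition to be unique.

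\textbf{Step 4: the exact structure.} Closure under extensions of a full additive subcategory of the abelian category $\repFk$ gives an exact structure by restricting the short exact sequences. Each $\mathcal{C}_{i}$ is additive, being the essential image of an additive functor (Theorem~\ref{teo:functorsareadditive}), and it is closed under isomorphisms by definition. The conflations are then the exact sequences of $\repFk$ with all terms in $\mathcal{C}_{i}$, and this is the standard fact, e.g.\ from B\"uhler's survey on exact categories.
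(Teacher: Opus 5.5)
Your proof is correct. Steps 1, 2 and 4 match the paper's: the five lemma for $\eta_V$, the diagram chase for injectivity, and B\"uhler for the induced exact structure. The real difference is Step 3.

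The paper works out only $\mathcal{C}_5$. It picks vertexwise splittings $r_i,s_i$, forms $h_\alpha,h_\beta,h_\gamma$ and a correction term $W_3\to U_1\oplus U_2$ obtained by inverting $\eta_U$, and writes explicit formulas for $\epsilon_V$ and $\zeta_V$. It then verifies by computation both that these give morphisms of short exact sequences and that $f_{\gamma,V}=f_{\alpha,V}\epsilon_V+f_{\beta,V}\zeta_V$.

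You instead observe that once $\eta_V$ is invertible, the factorization is forced to be the pair of components of $\eta_V^{-1}f_\gamma$ (or $\eta_V^{-1}f_\delta$). The existential clauses of Theorem~\ref{teo:structural} therefore become conditions on canonical maps. Naturality of $\eta^{-1}$ then yields the morphism of exact sequences for free, and the factorization identity holds by definition. By the same uniqueness, the paper's $\epsilon_V,\zeta_V$ must coincide with your canonical components. Its computation only buys an expression of these maps in terms of $\epsilon_U$, $\epsilon_W$ and the splittings. Your argument is shorter and applies verbatim to $\theta$, $\zeta$, $\epsilon$ in all of cases (b)--(e).

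That uniformity is a genuine gain. The paper disposes of $\mathcal{C}_1$--$\mathcal{C}_4$ by saying they are abelian subcategories of $\repFk$, hence closed under extensions. That implication fails in general: the modules annihilated by $x$ form a subcategory of $k[x]/(x^2)$-modules closed under kernels and cokernels, yet $k[x]/(x^2)$ is an extension of two of them. So your Steps 1 and 3 actually supply the justification those four cases need.
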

\begin{proof}
For $\mathcal{C}_{1}$, $\mathcal{C}_{2}$, $\mathcal{C}_{3}$, and $\mathcal{C}_{4}$ this is immediate since they are abelian subcategories of $\repFk$, hence closed under extensions.

We shall provide the proof for $\mathcal{C}_{5}$. The argument for $\mathcal{C}_{6}$ is analogous. Let
\[
\begin{tikzcd}[
  ampersand replacement=\&,
  column sep=small
  ]
0\arrow[r] 
  \& U\arrow[r,"\iota"] 
  \& V\arrow[r,"\pi"] 
  \& W\arrow[r] 
  \& 0
\end{tikzcd}
\]
be a short exact sequence in $\repFk$, where $U$ and $W$ are objects in $\mathcal{C}_{5}$. To show that $V\in\obj_{5}$, we must verify that $V$ satisfies the characterizations given in Thorem~\ref{teo:structural}\ref{ite:ob5}. The argument is divided into three steps, using standard tools: the Five Lemma to show that $\eta_{V}$ is an isomorphism, a diagram chase for the injectivity of $f_{\delta,V}$, and a construction based on splittings to guarantee the existence of isomorphisms $\epsilon_{V}\colon V_{3}\Iso V_{1}$ and $\zeta_{V}\colon V_{3}\Iso V_{2}$ such that $f_{\gamma,V}=f_{\alpha}\epsilon_{V}+ f_{\beta,V}\zeta_{V}$.

Step 1: $\eta_{V}$ is an isomorphism. Since $U,V\in\obj_{5}$, the maps $\eta_{U}$ and $\eta_{W}$ are isomorphisms by hypothesis. We then have the following commutative diagram with exact rows:
\[
\begin{tikzcd}[
  ampersand replacement=\&,
  column sep=3em
  ]
0\arrow[r] 
  \& U_{1}\oplus U_{2}
    \arrow[r,"\iota_{1}\oplus\iota_{2}"]\arrow[d,swap,"\eta_{U}"] 
  \& V_{1}\oplus V_{2}
    \arrow[r,"\pi_{1}\oplus\pi_{2}"]\arrow[d,swap,"\eta_{V}"] 
  \& W_{1}\oplus W_{2}
    \arrow[r]\arrow[d,swap,"\eta_{W}"] 
  \& 0 \\
0\arrow[r] 
  \& U_{0}\arrow[r,"\iota_{0}"] 
  \& V_{0}\arrow[r,"\pi_{0}"] 
  \& W_{0}\arrow[r] 
  \& 0 \\
\end{tikzcd}
\]
Thus, $\eta_{V}$ is an isomorphism.

Step 2: $f_{\delta,V}$ is injective. Consider the following commutative diagram with exact rows:
\[
\begin{tikzcd}[
  ampersand replacement=\&,
  ]
0\arrow[r] 
  \& U_{4}\arrow[r,"\iota_{4}"]\arrow[d,swap,"f_{\delta,U}"] 
  \& V_{4}\arrow[r,"\pi_{4}"]\arrow[d,swap,"f_{\delta,V}"] 
  \& W_{4}\arrow[r]\arrow[d,swap,"f_{\delta,W}"] 
  \& 0 \\
0\arrow[r] 
  \& U_{0}\arrow[r,"\iota_{0}"] 
  \& V_{0}\arrow[r,"\pi_{0}"] 
  \& W_{0}\arrow[r] 
  \& 0 \\
\end{tikzcd}
\]
By hypothesis, $f_{\delta,U}$ and $f_{\delta,W}$ are injective. 
Let $v\in V_{4}$ such that $f_{\delta,V}(v)=0$. By the commutativity of the diagram, we have
\[
0=\pi_{0}f_{\delta,V}(v)=f_{\delta,W}\pi_{4}(v).
\]
The injectivity of $f_{\delta,W}$ implies $\pi_{4}(v)=0$, which means $v\in\Ker\pi_{4}=\Ima\iota_{4}$. Thus $v=\iota_{4}(u)$, for some $u\in U_{4}$. Utilizing the commutativity of the left square,
\[
\iota_{0}f_{\delta,U}(u)
= f_{\delta,V}\iota_{4}(u)
= f_{\delta,V}(v)
=0.
\]
Since $\iota_{0}$ and $f_{\delta,U}$ are both injective, it follows that $u=0$ and consequently, $v=\iota_{4}(u)=0$. Therefore, $f_{\delta,V}$ is injective.

Step 3: existence of the isomorphisms $\epsilon_{V}$ and $\zeta_{V}$. Since short exact sequences of quiver representations are pointwise exact, they split in the category of vector spaces. Thus, for each vertex $i\in\{0,1,2,3,4\}$, we can choose a section $s_{i}\colon W_{i}\to V_{i}$ and a retraction $r_{i}\colon V_{i}\to U_{i}$ such that $r_{i}\iota_{i}=1_{U_{i}}$, $\pi_{i}s_{i}=1_{W_{i}}$, and $\iota_{i}r_{i} + s_{i}\pi_{i}=1_{V_{i}}$.

Define the following maps:
\[
h_{\alpha}=r_{0}f_{\alpha,V}s_{1},\qquad
h_{\beta}=r_{0}f_{\beta,V}s_{2},\qquad
h_{\gamma}=r_{0}f_{\gamma,V}s_{3}.
\]
By hypothesis, there exist isomorphisms
\[
\epsilon_{U}\colon U_{3}\Iso U_{1},\qquad
\epsilon_{W}\colon W_{3}\Iso W_{1},\qquad
\zeta_{U}\colon U_{3}\Iso U_{2},\qquad
\zeta_{W}\colon W_{3}\Iso W_{2},
\]
such that $f_{\gamma,U}=f_{\alpha,U}\epsilon_{U} + f_{\beta,U}\zeta_{U}$ and $f_{\gamma,W}=f_{\alpha,W}\epsilon_{W} + f_{\beta,W}\zeta_{W}$.

Furthermore, the isomorphism $\eta_{U}\colon U_{1}\oplus U_{2}\to U_{0}$ induces an isomorphism between the corresponding $\Hom$-groups
\[
\eta_{U}^{\ast}\colon 
\Hom (W_{3},U_{1}\oplus U_{2})\Iso 
\Hom (W_{3},U_{0}).
\]
Therefore, we can uniquely choose a linear map $\sigma_{\epsilon}\oplus\sigma_{\zeta}$ from $W_{3}$ to $U_{1}\oplus U_{2}$ such that
\begin{equation}
\label{equ:theta}
\eta_{U}(\sigma_{\epsilon}\oplus\sigma_{\zeta})
=h_{\gamma}-h_{\alpha}\epsilon_{W}-h_{\beta}\zeta_{W}.
\end{equation}

We then define
\begin{align}
\begin{split}
\label{equ:epsilon}
\epsilon_{V} 
  &= \iota_{1}\epsilon_{U}r_{3}
  + s_{1}\epsilon_{W}\pi_{3}
  + \iota_{1}\sigma_{\epsilon}\pi_{3}, \\
\zeta_{V} 
  &= \iota_{2}\zeta_{U}r_{3}
  + s_{2}\zeta_{W}\pi_{3}
  + \iota_{2}\sigma_{\zeta}\pi_{3}.
\end{split}
\end{align}
A straightforward calculation shows that, with this definition, the following diagram commutes:
\[
\begin{tikzcd}[
  ampersand replacement=\&,
  ]
0\arrow[r] 
  \& U_{3}\arrow[r,"\iota_{3}"]\arrow[d,swap,"\epsilon_{U}"] 
  \& V_{3}\arrow[r,"\pi_{3}"]\arrow[d,swap,"\epsilon_{V}"] 
  \& W_{3}\arrow[r]\arrow[d,swap,"\epsilon_{W}"] 
  \& 0 \\
0\arrow[r] 
  \& U_{1}\arrow[r,"\iota_{1}"] 
  \& V_{1}\arrow[r,"\pi_{1}"] 
  \& W_{1}\arrow[r] 
  \& 0
\end{tikzcd}
\]
and, since by hypothesis $\epsilon_{U}$ and $\epsilon_{W}$ are isomorphisms, the Five Lemma implies that $\epsilon_{V}$ also is. Analogously, $\zeta_{V}$ is an isomorphism.

Furthermore,
\begin{align*}
f_{\alpha,V}
&= 1_{V_{0}}f_{\alpha,V}1_{V_{1}} \\
&= (\iota_{0}r_{0}+s_{0}\pi_{0})
  f_{\alpha,V}(\iota_{1}r_{1}+s_{1}\pi_{1}) \\
&= \iota_{0}r_{0}(f_{\alpha,V}\iota_{1})r_{1}
  + \iota_{0}(r_{0}f_{\alpha,V}s_{1})\pi_{1}
  + s_{0}\pi_{0}(f_{\alpha,V}\iota_{1})r_{1}
  + s_{0}(\pi_{0}f_{\alpha,V})s_{1}\pi_{1} \\
&= \iota_{0}r_{0}(\iota_{0}f_{\alpha,U})r_{1}
  + \iota_{0}h_{\alpha}\pi_{1}
  + s_{0}(f_{\alpha,W}\pi_{0})s_{1}\pi_{1} \\
&= \iota_{0}f_{\alpha,U}r_{1}
  + \iota_{0}h_{\alpha}\pi_{1}
  + s_{0}f_{\alpha,W}\pi_{1}.
\end{align*}
And, analogously,
\begin{align*}
f_{\beta,V}
&= \iota_{0}f_{\beta,U}r_{2}
  + \iota_{0}h_{\beta}\pi_{2}
  + s_{0}f_{\beta,W}\pi_{2},\\
f_{\gamma,V}
&= \iota_{0}f_{\gamma,U}r_{3}
  + \iota_{0}h_{\gamma}\pi_{3}
  + s_{0}f_{\gamma,W}\pi_{3},\\
\end{align*}
From these and~\eqref{equ:epsilon}, we get
\begin{align*}
f_{\alpha,V}\epsilon_{V}
&= (\iota_{0}f_{\alpha,U}r_{1}
  + \iota_{0}h_{\alpha}\pi_{1}
  + s_{0}f_{\alpha,W}\pi_{1})
  (\iota_{1}\epsilon_{U}r_{3}
  + s_{1}\epsilon_{W}\pi_{3}
  + \iota_{1}\sigma_{\epsilon}\pi_{3})\\
&= \iota_{0}(f_{\alpha,U}\epsilon_{U})r_{3} 
  + s_{0}(f_{\alpha,W}\epsilon_{W})\pi_{3}
  + \iota_{0}(f_{\alpha,U}\sigma_{\epsilon} + h_{\alpha}\epsilon_{W})\pi_{3},  
\shortintertext{and}
f_{\beta,V}\zeta_{V}
&= \iota_{0}(f_{\beta,U}\zeta_{U})r_{3} 
  + s_{0}(f_{\beta,W}\zeta_{W})\pi_{3}
  + \iota_{0}(f_{\beta,U}\sigma_{\zeta} + h_{\alpha}\zeta_{W})\pi_{3}.  
\end{align*}
From here,
\begin{align}
\begin{split}
\label{equ:final}
f_{\alpha,V}\epsilon_{V} + f_{\beta,V}\zeta_{V}
&= \iota_{0}(f_{\alpha,U}\epsilon_{U}+f_{\beta,U}\zeta_{U})r_{3}
  + s_{0}(f_{\alpha,W}\epsilon_{W}+f_{\beta,W}\zeta_{W})\pi_{3} \\
&\qquad + 
  \iota_{0}(
    f_{\alpha,U}\sigma_{\epsilon} + h_{\alpha}\epsilon_{W}
    + f_{\beta,U}\sigma_{\zeta} + h_{\beta}\zeta_{W}
    )\pi_{3}\\  
&= \iota_{0}f_{\gamma,U}r_{3}
  + s_{0}f_{\gamma,W}\pi_{3} 
  + \iota_{0}\Lambda\pi_{3},  
\end{split}
\end{align}
where
\[
\Lambda=f_{\alpha,U}\sigma_{\epsilon} + h_{\alpha}\epsilon_{W}
    + f_{\beta,U}\sigma_{\zeta} + h_{\beta}\zeta_{W}.
\]
Now, taking \eqref{equ:theta} into account,
\begin{align*}
\Lambda
&= f_{\alpha,U}\sigma_{\epsilon} 
  + f_{\beta,U}\sigma_{\zeta} 
  + h_{\alpha}\epsilon_{W}
  + h_{\beta}\zeta_{W} \\
&= \eta_{U}(\sigma_{\epsilon} + \sigma_{\zeta})
  + h_{\alpha}\epsilon_{W}
  + h_{\beta}\zeta_{W} \\
&= h_{\gamma}-h_{\alpha}\epsilon_{W}-h_{\beta}\zeta_{W}
  + h_{\alpha}\epsilon_{W}
  + h_{\beta}\zeta_{W} \\
&= h_{\gamma},  
\end{align*}
Substituting in~\eqref{equ:final}, we finally get
\[
f_{\alpha,V}\epsilon_{V} + f_{\beta,V}\zeta_{V}
= \iota_{0}f_{\gamma,U}r_{3}
  + s_{0}f_{\gamma,W}\pi_{3} 
  + \iota_{0}h_{\gamma}\pi_{3}
=f_{\gamma,V}.    
\]
This shows that $V$ is an object in $\mathcal{C}_{5}$, proving that the category is closed under extensions.

Since $\repFk$ is an abelian category, it follows that each $\mathcal{C}_{i}$ inherits an exact structure by taking as exact sequences those short exact sequences in $\repFk$ whose terms lie in $\mathcal{C}_{i}$ (see~\cite{Buh2010}). 
\end{proof}
\begin{coro}
The categories $\mathcal{A}_{1}=\repSk$, $\mathcal{A}_{2}=\repDk$, $\mathcal{A}_{3}=\repKk$, $\mathcal{A}_{4}=\repCk$, $\mathcal{A}_{5}=\lrel$, and $\mathcal{A}_{6}=\lreltt$ are exact.
\end{coro}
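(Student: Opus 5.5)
The plan is to transport the exact structure that the preceding proposition puts on each $\mathcal{C}_{i}$ back to $\mathcal{A}_{i}$ along the equivalence $\mathcal{A}_{i}\simeq\mathcal{C}_{i}$. Fix $i\in\{1,\ldots,6\}$. By Theorem~\ref{teo:propertiesoffunctors} and Theorem~\ref{teo:functorsareadditive}, $\func{F}^{i}$ is additive and fully faithful. Corestricting it to its essential image $\mathcal{C}_{i}$ gives a functor that is also essentially surjective, hence an additive equivalence $\mathcal{A}_{i}\to\mathcal{C}_{i}$. I will choose a quasi-inverse $\func{G}^{i}\colon\mathcal{C}_{i}\to\mathcal{A}_{i}$ and note that it is additive, since any equivalence between additive categories preserves biproducts (alternatively, Lemma~\ref{lem:ffadditive} applies directly to $\func{G}^{i}$).

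Next I define the exact structure on $\mathcal{A}_{i}$. A kernel–cokernel pair $X\to Y\to Z$ in $\mathcal{A}_{i}$ will be declared a conflation exactly when its image
\[
\func{F}^{i}X\to\func{F}^{i}Y\to\func{F}^{i}Z
\]
is a short exact sequence in $\repFk$. By the preceding proposition, these image sequences are precisely the conflations of the inherited exact structure on $\mathcal{C}_{i}$. I then check Quillen's axioms as formulated in \cite{Buh2010}. The main points are the following.
\begin{enumerate}
\item The class of conflations is closed under isomorphism, and identity morphisms are inflations and deflations.
\item Inflations and deflations are closed under composition.
\item Pushouts along inflations and pullbacks along deflations exist and are again inflations and deflations, respectively.
\end{enumerate}
Each of these is verified in $\mathcal{C}_{i}$, where it holds by the proposition. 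It then passes back to $\mathcal{A}_{i}$ because an additive equivalence preserves and reflects kernels, cokernels, pushouts and pullbacks, and because $\func{F}^{i}$ preserves and reflects isomorphisms by Lemma~\ref{lem:ffpreservesiso}.

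For $\mathcal{A}_{1},\ldots,\mathcal{A}_{4}$ there is a shortcut: these are categories of quiver representations, so they are abelian and carry their own abelian exact structure. For $\mathcal{A}_{5}=\lrel$ and $\mathcal{A}_{6}=\lreltt$ the transport argument is genuinely needed, since the Remark shows these categories are not abelian.

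The step I expect to need the most care is the pushout and pullback axiom. The issue is that a pushout formed in $\repFk$ of a diagram from $\mathcal{C}_{i}$ must land in $\mathcal{C}_{i}$, up to isomorphism, before it can be pulled back along the equivalence. My first approach is to avoid this by relying entirely on the statement of the preceding proposition, namely that $\mathcal{C}_{i}$ with the inherited structure is exact, as justified via \cite{Buh2010}. With that in hand, the corollary reduces to the formal fact that exact structures transport along additive equivalences.
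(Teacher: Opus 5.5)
Your proposal is correct and follows the paper's route. The paper's proof is the one-line remark that exact structures are preserved under equivalence. It applies this to the equivalence $\mathcal{A}_{i}\simeq\mathcal{C}_{i}$ induced by the additive, fully faithful functor $\func{F}^{i}$, using the exact structure that each extension-closed $\mathcal{C}_{i}$ inherits from $\repFk$ by the preceding proposition (via \cite{Buh2010}). Your explicit check of Quillen's axioms, and your reliance on that proposition for the pushout/pullback step, simply unpack this transport argument.
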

\begin{proof}
Exact structures are preserved under equivalence of categories.
\end{proof}
\begin{lema}
\label{lem:ffpreservesind}
A fully faithful functor $\func{G}\colon\mathcal{E}\to\mathcal{F}$ between Krull-Schmidt categories preserves and reflects indecomposability.
\end{lema}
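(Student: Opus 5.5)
The plan is to work with the characterization of indecomposability through endomorphism rings that holds in Krull-Schmidt categories: a nonzero object $X$ is indecomposable if and only if $\End X$ is local, if and only if $\End X$ has no idempotents other than $0$ and $1_{X}$. (The last equivalence uses that idempotents split in a Krull-Schmidt category, as was verified for $\lrel$ in Lemma~\ref{lem:idemsplit}.) The first observation is that a fully faithful functor $\func{G}$ induces, for every object $X$ of $\mathcal{E}$, a bijection
\[
\func{G}_{X,X}\colon\End_{\mathcal{E}}X\to\End_{\mathcal{F}}\func{G}X.
\]
This map is multiplicative and unital because $\func{G}$ is a functor. It is additive as well: in the situations of the paper we know this from Theorem~\ref{teo:functorsareadditive}, and in general from Lemma~\ref{lem:ffadditive} once $\func{G}$ preserves zero objects. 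So $\func{G}_{X,X}$ is a ring isomorphism.

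Even without additivity, $\func{G}_{X,X}$ is a monoid isomorphism, and that already suffices for idempotents. An endomorphism $e$ satisfies $e^{2}=e$ exactly when $\func{G}(e)^{2}=\func{G}(e)$. Moreover $e=0_{X}$ exactly when $\func{G}(e)=0$, using faithfulness together with the fact that $\func{G}$ preserves zero morphisms. Likewise $e=1_{X}$ exactly when $\func{G}(e)=1_{\func{G}X}$. Hence $\End X$ has a nontrivial idempotent if and only if $\End\func{G}X$ has one.

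It remains to handle nonzero-ness. If $X\neq0$, then $1_{X}\neq0_{X}$, so faithfulness gives $1_{\func{G}X}\neq0$, and therefore $\func{G}X\neq0$. Conversely, if $\func{G}X\neq0$ but $X=0$, then $1_{X}=0_{X}$ would force $1_{\func{G}X}=0$, a contradiction. Combining this with the idempotent correspondence, $X$ is indecomposable if and only if $\func{G}X$ is, which gives both preservation and reflection.

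The main subtlety is to justify the equivalence between ``indecomposable'' and ``only trivial idempotents'' in each Krull-Schmidt category. In one direction, a nontrivial decomposition $X\simeq Y\oplus Z$ yields the idempotent $\iota_{Y}p_{Y}$. In the other, a nontrivial idempotent splits, because Krull-Schmidt categories are idempotent complete, and as in the proof of Theorem~\ref{teo:lreliskrull} this gives a nontrivial decomposition. Alternatively, one can argue directly with local rings, since a ring isomorphic to a local ring is local.
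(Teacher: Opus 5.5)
Your proof is correct and is essentially the paper's argument. Both rest on the bijection $\func{G}_{X,X}\colon\End X\to\End\func{G}X$ and on the fact that, in a Krull--Schmidt category, indecomposability is read off from the endomorphism ring: the paper uses locality directly (your closing remark), while your main line uses the nonzero/trivial-idempotent criterion, which needs only the multiplicative structure of $\func{G}_{X,X}$ but relies on idempotents splitting in both categories. The one point you leave implicit in the general case, that $\func{G}$ preserves zero objects and hence zero morphisms, is automatic: if $Z=0$ then $\End Z$ is a singleton, so by bijectivity $\End\func{G}Z$ is a singleton, i.e.\ $1_{\func{G}Z}=0$.
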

\begin{proof}
Since $\func{G}$ is full and faithfull, we have an isomorphism $\Endo \simeq \Endo[(\func{G}U)]$, for each object $U$ in $\mathcal{E}$. Therefore, $U$ is indecomposable if and only if $\Endo$ is a local ring, which is equivalent, by the isomorphism, to $\Endo[(\func{G}U)]$ being a local ring. This, in turn, holds if and only if $\func{G}U$ is indecomposable
\end{proof}
\begin{coro}
The categories $\mathcal{A}_{1}$, $\mathcal{A}_{2}$, $\mathcal{A}_{3}$, $\mathcal{A}_{4}$, $\mathcal{A}_{5}$, and $\mathcal{A}_{6}$ are tame, of finite growth.
\end{coro}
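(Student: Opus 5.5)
The plan is to transfer tameness and finite growth from $A=k\carc{F}$, established in Theorem~\ref{teo:finitegrowthF}, to each $\mathcal{A}_{i}$ along the embeddings $\func{F}^{i}$. The tools are the fact that these functors are additive and fully faithful (Theorems~\ref{teo:propertiesoffunctors} and~\ref{teo:functorsareadditive}), that they reflect isomorphism (Lemma~\ref{lem:ffpreservesiso}), and that they preserve and reflect indecomposability (Lemma~\ref{lem:ffpreservesind}). For $\mathcal{A}_{1},\dots,\mathcal{A}_{4}$ these are module categories of finite-dimensional algebras, hence Krull-Schmidt. For $\mathcal{A}_{5}$ and $\mathcal{A}_{6}$ the Krull-Schmidt property comes from Theorem~\ref{teo:lreliskrull} and its analogue for $\lreltt$. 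So Lemma~\ref{lem:ffpreservesind} applies in all six cases.

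\textbf{Step 1: injectivity on indecomposables.} By Lemma~\ref{lem:ffpreservesind} and Lemma~\ref{lem:ffpreservesiso}, $\func{F}^{i}$ induces an injective map from isoclasses of indecomposables of $\mathcal{A}_{i}$ to isoclasses of indecomposables of $\repFk$. Its image consists of the indecomposables of $\mathscr{C}_{i}$.

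\textbf{Step 2: control of dimension.} I will fix a notion of dimension $d$ on $\mathcal{A}_{i}$, namely the total dimension of the underlying spaces. For example, for $\rho=(V_{1},V_{2},R_{1},R_{2})$ I take $\dim V_{1}+\dim V_{2}$. The dimension of $\func{F}^{i}(V)$ is then bounded by a constant multiple of $d$; since $\dim R_{j}\le\dim V_{1}+\dim V_{2}$, the constant $4$ suffices. The indecomposables of $\mathcal{A}_{i}$ of dimension $d$ therefore inject into $\bigcup_{d'\le cd}\indes_{d'}A$.

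\textbf{Step 3: parametrizing families.} In each dimension $d'$, the indecomposable $A$-modules are, up to finitely many, of the form $\widehat{N}(X)$ with the single bimodule $N$ from the proof of Theorem~\ref{teo:finitegrowthF}. So the indecomposables of $\mathcal{A}_{i}$ of dimension $d$ are, up to finitely many, among those $V$ with $\func{F}^{i}(V)\simeq\widehat{N}(X)$.

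To conclude tameness intrinsically, I would exhibit for each $\mathcal{A}_{i}$ at most one one-parameter family per dimension. I will do this by reading off, via Theorem~\ref{teo:structural}, which $\widehat{N}(X)$ lie in $\mathscr{C}_{i}$. For instance, $\widehat{N}(X)$ lies in $\mathscr{C}_{3}$ and corresponds to the Kronecker module $(k^{r},k^{r},1,T)$, so the family is $-\otimes_{k[t]}N_{i}$ for a bimodule $N_{i}$ over the relevant category. Alternatively, where the source is the module category of an algebra $B_{i}$, I will argue directly: $\mu_{B_{i}}(d)\le\sum_{d'\le cd}\mu_{A}(d')$, and this is finite, bounded by $cd$.

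\textbf{Main obstacle.} The hardest step is making tame and finite growth meaningful for $\lrel$ and $\lreltt$, which are not module categories, and ensuring that the bound on the number of families is uniform in $d$. Growth $\mu\le1$ only survives if at most one parametric family of $A$ meets each dimension slice of $\mathscr{C}_{i}$. I would handle this by defining the notion through the embedding, i.e.\ tame of finite growth meaning the essential image $\mathscr{C}_{i}$ is parametrized by finitely many families in each dimension with linearly bounded count, and then verifying it via Step 3 using the explicit form of $\widehat{N}(X)$.
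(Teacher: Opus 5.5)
Your argument is correct and follows essentially the same route as the paper: Lemma~\ref{lem:ffpreservesind} and Lemma~\ref{lem:ffpreservesiso} identify the indecomposables of $\mathcal{A}_{i}$ with those of $\mathscr{C}_{i}\subseteq\repFk$, and tameness is then inherited from the single family $\widehat{N}$ of Theorem~\ref{teo:finitegrowthF}. Your Steps~2--3 control the dimension shift and check that every $\widehat{N}(X)$ lies in each $\mathscr{C}_{i}$ (e.g.\ $\widehat{N}(X)=\func{F}^{3}(k^{r},k^{r},1,T)$), so one pulled-back family suffices in each dimension of $\mathcal{A}_{i}$; this makes explicit what the paper's one-line comparison of family counts leaves implicit. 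Two small corrections: drop the alternative bound $\sum_{d'\le cd}\mu_{A}(d')\le cd$, since it only gives linear growth and not the uniform bound that finite growth requires. Also, $\carc{C}$ has an oriented cycle, so $k\carc{C}$ is infinite-dimensional; justify the Krull--Schmidt property of $\mathcal{A}_{4}$ by the finite-dimensionality of endomorphism algebras of finite-dimensional representations, not by $\mathcal{A}_{4}$ being the module category of a finite-dimensional algebra.
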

\begin{proof}
Since the functors $\func{F}^{i}$ are fully faithful and the categories involved are Krull–Schmidt, Lemma~\ref{lem:ffpreservesind} implies that they preserve and reflect indecomposability. Therefore, each indecomposable object of $\mathcal{A}_i$ corresponds, via $\func{F}^{i}$, to an indecomposable object of $\repFk$, and this correspondence induces a bijection between the set of isomorphism classes of indecomposable objects in $\mathcal{A}_i$ and those in the full subcategory $\mathcal{C}_{i}$.

By Theorem~\ref{teo:finitegrowthF}, the algebra $A = k\carc{F}$ is tame of finite growth. In particular, for each dimension $d$, the set of indecomposable $A$-modules of dimension $d$ is contained in finitely many one-parameter families. Since each category $\mathcal{C}_{i}$ is a full subcategory of $\repFk$, its indecomposable objects form a subset of the indecomposable representations of $\carc{F}$. Hence, the number of one-parameter families of indecomposable objects of dimension $d$ in $\mathcal{C}_{i}$ is bounded by the corresponding number for $\repFk$.

It follows that each category $\mathcal{C}_{i}$, and therefore each $\mathcal{A}_i$, is tame of finite growth.
\end{proof}
The results of this section show that each of the six categories associated to the subproblems can be realized, via the functors $\func{F}^{i}$, as a full subcategory of $\repFk$. In particular, the study of their indecomposable objects reduces to the analysis of those indecomposable representations of the quiver $\carc{F}$ that satisfy certain structural constraints.

In order to describe these representations explicitly, in the next section we will introduce the matrix approach that will be used throughout the remainder of this paper. 

\section{Matrix presentations}
\label{sec:matrices}
We begin this section by presenting the notation that will be used in what follows.

For an integer $n\geq 1$, the matrices $\iup$, $\ido$ are the $(n+1)\times n$ matrices obtained by adjoining a row of zeros above, below, the identity matrix $I_{n}$. Analogously, the matrices $\iri$, $\ile$ are the $n\times (n+1)$ matrices obtained by adjoining a column of zeros to the right, to the left, of the identity matrix $I_{n}$. For $n=0$, the matrices $\iup[0]$ and $\ido[0]$ are equal and they are \textquote{formal} matrices having one row and zero columns, and representing the linear operator $0\to k$. The matrices $\iri[0]$ and $\ile[0]$ are also equal and are \textquote{formal} matrices having zero rows and one column, and representing the linear operator $k\to 0$.

We will also use the following standard canonical matrices. By $F_n(p^{s}(t))$ we will denote the Frobenius cell, also called rational canonical form cell, of order $n$ having the minimal polynomial $p^{s}(t)$, where $p(t)$ is monic and irreducible. In other words, $F_n(p^{s}(t))$ is the companion matrix of $p^{s}(t)$ (in particular, $n=s\cdot\deg p(t)$).

We denote by $J^{+}_{n}(0)$ the Jordan block of order $n$ with  eigenvalue $0$ and entries $1$ above the main diagonal.

The indecomposable representations of the tetrad can be interpreted as indecomposable representations of the quiver $\carc{F}$ in the following way. Let $V=(V_{0},V_{1},V_{2},V_{3},V_{4})$ be a representation of the tetrad $\carc{T}$, with dimension vector $d=(d_{0},d_{1},d_{2},d_{3},d_{4})$, and having matrix presentation
\[
M_{V}=
\begin{tikzpicture}[baseline={([yshift=-0.7ex]current bounding box.center)}]
\node[inner sep=0pt] (mat1)
  {\tetradmatnoline{tbt2}}; 
\end{tikzpicture}
\]
in reduced form. This representation corresponds to the representation 
\[
(W_{0},W_{1},W_{2},W_{3},W_{4},
f_{\alpha},f_{\beta},f_{\gamma},f_{\delta})
\] 
of the quiver $\carc{F}$, where $W_{i}=k^{d_{i}}$, for $i\in\{0,1,2,3,4\}$, and $[f_{\alpha}]=A$, $[f_{\beta}]=B$, $[f_{\delta}]=C$, and $[f_{\gamma}]=D$. Here the linear operators are expressed in matrix form, with respect to some fixed bases. Schematically, we have the following correspondence:
\[
M_{V}=
\begin{tikzpicture}[baseline={([yshift=-0.7ex]current bounding box.center)}]
\node[inner sep=0pt] (mat1)
  {\tetradmatnoline{tbt2}}; 

\node[copo,right=3cm of mat1.east,label={[label distance=3pt]below:$k^{d_{0}}$}] (V0) {};
\node[copo,above left=of V0,label={above:$k^{d_{1}}$}] (V1) {}; 
\node[copo,below left=of V0,label={below:$k^{d_{2}}$}] (V2) {}; 
\node[copo,above right=of V0,label={above:$k^{d_{3}}$}] (V3) {}; 
\node[copo,below right=of V0,label={below:$k^{d_{4}}$}] (V4) {}; 
\draw[->,corto] 
  (V1) -- 
  node[anchor=south,inner sep=5pt,font=\footnotesize] {$A$} 
  (V0);
\draw[->,corto] 
  (V2) -- 
  node[anchor=south,inner sep=5pt,font=\footnotesize,pos=0.3] {$B$} 
  (V0);
\draw[->,corto] 
  (V3) -- 
  node[anchor=south,inner sep=5pt,font=\footnotesize] {$C$} 
  (V0);
\draw[->,corto] 
  (V4) -- 
  node[anchor=south,inner sep=5pt,font=\footnotesize,pos=0.3] {$D$} 
  (V0);
\node at ( $ (mat1.east)!0.4!(V0) $ ) {$\longmapsto$};    
\end{tikzpicture}
\]
The classical subproblems discussed in Section~\ref{sec:FSPandsubproblems} can also be interpreted in matrix form. After fixing bases for the vector spaces involved, each representation can be written as a block matrix whose blocks correspond to the matrices of the linear operators.

In Table~\ref{tab:fourquivers} we collect the quivers associated with four of the subproblems under consideration and the corresponding block matrices of their representations.

We now describe the matrix presentations for linear relations.
\begin{table}[!ht]
\caption{Quivers associated with four of the subproblems considered, together with the corresponding matrix forms.  In the latter, the blocks $A$, $B$, $C$, and $D$ are the matrices of the linear operators $f_{\alpha}$,  $f_{\beta}$, $f_{\gamma}$, and $f_{\delta}$, respectively, with respect to fixed bases.}
\label{tab:fourquivers}
\[
\begin{array}{cc>{\hspace{1.2cm}}c}
\toprule
\text{Quiver} & \text{Representation} & \text{Matrix form}
\\
\cmidrule(lr){1-1}
\cmidrule(lr){2-2}
\cmidrule(lr){3-3}
% A3
\begin{tikzpicture}[node distance=1.3cm and 1.5cm,baseline=(current bounding box.center)]
\node[copo,label={left:$1$}] (S1) {};
\node[copo,below=of S1,label={left:$2$}] (S2) {};
\node[copo,right=of S1,label={right:$3$}] (S3) {};
\node[copo,below=of S3,label={right:$4$}] (S4) {};

\node at ([yshift=0.8cm]$(S1)!0.5!(S3)$ ) 
  {Quiver $\carc{S}$ (type $\widetilde{A}_{3}$)};

\draw[->,corto] 
  (S3) -- 
    node[anchor=south] {$\alpha$} 
  (S1);
\draw[->,corto] 
  (S3) -- 
    node[near end,fill=white] {$\beta$} 
  (S2);
\draw[->,corto] 
  (S4) -- 
    node[near end,fill=white] {$\gamma$} 
  (S1);
\draw[->,corto] 
  (S4) -- 
    node[anchor=north] {$\delta$} 
  (S2);
\end{tikzpicture}
  & (V_{1},V_{2},V_{3},V_{4},
     f_{\alpha},f_{\beta},f_{\gamma},f_{\delta})
   & \Smatrix[baseline=(current bounding box.center),name=t,remember picture]{|[text width=2.5em]|A}{|[text width=2.5em]|C}{|[text width=2.5em]|B}{|[text width=2.5em]|D}\rule[-1.5cm]{0pt}{0pt}
\begin{tikzpicture}[remember picture,overlay]
\draw[decoration={brace,raise=3pt,mirror},decorate]
  (mim-1-1.north west) --
    node[xshift=-5pt,anchor=east] {\footnotesize$\dim V_{1}$}
  (mim-1-1.south west);
\draw[decoration={brace,raise=3pt,mirror},decorate]
  (mim-2-1.north west) --
    node[xshift=-5pt,anchor=east] {\footnotesize$\dim V_{2}$}
  (mim-2-1.south west);
\draw[decoration={brace,raise=3pt},decorate]
  (mim-1-1.north west) --
    node[yshift=5pt,xshift=-2pt,anchor=south] {\footnotesize$\dim V_{3}$}
  (mim-1-1.north east);
\draw[decoration={brace,raise=3pt},decorate]
  (mim-1-2.north west) --
    node[yshift=5pt,xshift=2pt,anchor=south] {\footnotesize$\dim V_{4}$}
  (mim-1-2.north east);
\end{tikzpicture}  
\\
\midrule
% A2
\begin{tikzpicture}[node distance=1.3cm,baseline=(current bounding box.center)]
\node[copo,label={left:$1$}] (S1) {};
\node[copo,below=of S1,label={left:$2$}] (S2) {};
\node[copo,right=of S1,label={right:$3$}] (S3) {};

\node at ([yshift=0.8cm]$(S1)!0.5!(S3)$ ) 
  {Quiver $\carc{D}$ (type $\widetilde{A}_{2}$)};

\draw[->,corto] 
  (S3) -- 
    node[anchor=south] {$\alpha$} 
  (S1);
\draw[->,corto] 
  (S3) -- 
    node[below right] {$\beta$} 
  (S2);
\draw[->,corto] 
  (S2) -- 
    node[anchor=east] {$\gamma$} 
  (S1);
\end{tikzpicture}
  & (V_{1},V_{2},V_{3},
     f_{\alpha},f_{\beta},f_{\gamma})
   & \Dmatrix[baseline=(current bounding box.center),name=t,remember picture]{|[text width=2.5em]|A}{|[text width=2.5em]|C}{|[text width=2.5em]|B}\rule{0pt}{1.5cm}
\begin{tikzpicture}[remember picture,overlay]
\draw[decoration={brace,raise=3pt,mirror},decorate]
  (mim-1-1.north west) --
    node[xshift=-5pt,anchor=east] {\footnotesize$\dim V_{1}$}
  (mim-1-1.south west);
\draw[decoration={brace,raise=3pt,mirror},decorate]
  (mim-2-1.north west) --
    node[xshift=-5pt,anchor=east] {\footnotesize$\dim V_{2}$}
  (mim-2-1.south west);
\draw[decoration={brace,raise=3pt},decorate]
  (mim-1-1.north west) --
    node[yshift=5pt,xshift=-2pt,anchor=south] {\footnotesize$\dim V_{3}$}
  (mim-1-1.north east);
\draw[decoration={brace,raise=3pt},decorate]
  (mim-1-2.north west) --
    node[yshift=5pt,xshift=2pt,anchor=south] {\footnotesize$\dim V_{2}$}
  (mim-1-2.north east);
\end{tikzpicture}
\\
\midrule
% Kronecker
\begin{tikzpicture}[node distance=1.5cm,baseline=(current bounding box.center)]
\node[copo,label={left:$1$}] (K1) {};
\node[copo,right=of K1,label={right:$2$}] (K2) {};

\node at ([yshift=0.8cm]$(K1)!0.5!(K2)$ ) 
  {Kronecker quiver $\carc{K}$};

\draw[->,corto] 
  ( $ (K2) + (-3pt,3pt) $ ) -- 
    node[anchor=south] {$\alpha$} 
  ( $ (K1) + (3pt,3pt) $ );
\draw[->,corto] 
  ( $ (K2) + (-3pt,-3pt) $ ) -- 
    node[anchor=north] {$\beta$} 
  ( $ (K1) + (3pt,-3pt) $ );
\end{tikzpicture}
  & (V_{1},V_{2},f_{\alpha},f_{\beta})
  & \Kmatrix[baseline=(current bounding box.center),name=t,remember picture]{|[text width=2.5em]|A}{|[text width=2.5em]|B}\rule{0pt}{1.5cm}
\begin{tikzpicture}[remember picture,overlay]
\draw[decoration={brace,raise=3pt,mirror},decorate]
  (mim-1-1.north west) --
    node[xshift=-5pt,anchor=east] {\footnotesize$\dim V_{1}$}
  (mim-1-1.south west);
\draw[decoration={brace,raise=3pt},decorate]
  (mim-1-1.north west) --
    node[yshift=5pt,xshift=-2pt,anchor=south] {\footnotesize$\dim V_{2}$}
  (mim-1-1.north east);
\draw[decoration={brace,raise=3pt},decorate]
  (mim-1-2.north west) --
    node[yshift=5pt,xshift=2pt,anchor=south] {\footnotesize$\dim V_{2}$}
  (mim-1-2.north east);
\end{tikzpicture}
\\
\midrule
% Contragredient
\begin{tikzpicture}[node distance=1.5cm and 1.5cm,baseline=(current bounding box.center)]
\node[copo,label={left:$1$}] (C1) {};
\node[copo,right=of S1,label={right:$2$}] (C2) {};

\node[align=center] at ([yshift=1cm]$(K1)!0.5!(K2)$ ) 
  {Contragredient \\ Kronecker quiver $\carc{C}$};

\draw[->,corto] 
  ( $ (C2) + (-3pt,3pt) $ ) -- 
    node[anchor=south] {$\alpha$} 
  ( $ (C1) + (3pt,3pt) $ );
\draw[->,corto] 
  ( $ (C1) + (3pt,-3pt) $ ) -- 
    node[anchor=north] {$\beta$} 
  ( $ (C2) + (-3pt,-3pt) $ );
\end{tikzpicture}
  & (V_{1},V_{2},f_{\alpha},f_{\beta})
  & \CKmatrixi[baseline=(current bounding box.center),name=t,remember picture]{|[text width=2.5em]|A}{|[text width=2.5em]|B}\rule{0pt}{1.5cm}
\begin{tikzpicture}[remember picture,overlay]
\draw[decoration={brace,mirror,raise=3pt},decorate]
  (mim-1-2.north west) --
    node[xshift=-5pt,anchor=east] {\footnotesize$\dim V_{1}$}
  (mim-1-2.south west);
\draw[decoration={brace,raise=3pt},decorate]
  (mim-1-2.north west) --
    node[yshift=5pt,xshift=2pt,anchor=south] {\footnotesize$\dim V_{2}$}
  (mim-1-2.north east);

\draw[decoration={brace,mirror,raise=3pt},decorate]
  (mim-2-1.south west) --
    node[yshift=-5pt,anchor=north] {\footnotesize$\dim V_{1}$}
  (mim-2-1.south east);
\draw[decoration={brace,mirror,raise=3pt},decorate]
  (mim-2-1.north west) --
    node[xshift=-5pt,anchor=east] {\footnotesize$\dim V_{2}$}
  (mim-2-1.south west);
\end{tikzpicture}
\\[8ex]
\bottomrule
\end{array}
\]
\end{table}
For a finite-dimensional linear relation $\rho=(V_{1},V_{2},R)$ we can construct a matrix presentation $M_{\rho}$ by fixing ordered bases $B$ and $C$, for $V_{1}$ and $V_{2}$, respectively, and forming the matrix whose columns are the coordinates of a basis for $R$ (viewed as a subspace of $V_{1}\oplus V_{2}$) with respect to the basis 
\[
\{ (u_{i},0) \mid u_{i} \in B \} \cup \{ (0,v_{j}) \mid v_{j} \in C \}
\]
of $V_{1}\oplus V_{2}$. In the same way we can build matrix presentations for objects in the category $\lreltt$ of pairs of linear relations. These matrix forms are illustrated in Figure~\ref{fig:matrixlrellreltt}. 
\end{defi}
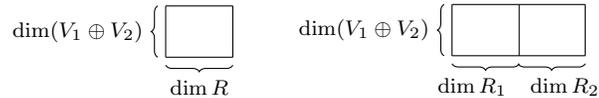
\begin{figure}[H]
\[
\begin{tikzpicture}[baseline={([yshift=-4pt]current bounding box.center)}]
\matrix[mimat,inner sep=0pt,text width=2.5em] (a)
{
\null \\
};
\draw (a-1-1.north west) -- (a-1-1.south west);
\draw (a-1-1.north east) -- (a-1-1.south east);
\draw (a-1-1.north west) -- (a-1-1.north east);
\draw (a-1-1.south west) -- (a-1-1.south east);
\draw[decoration={brace,raise=3pt,mirror},decorate]
  (a-1-1.north west) --
    node[xshift=-5pt,anchor=east] {\footnotesize$\dim (V_{1}\oplus V_{2})$}
  (a-1-1.south west);
\draw[decoration={brace,raise=3pt,mirror},decorate]
  (a-1-1.south west) --
    node[yshift=-5pt,anchor=north] {\footnotesize$\dim R$}
  (a-1-1.south east);
\end{tikzpicture}\qquad
\begin{tikzpicture}[baseline={([yshift=-4pt]current bounding box.center)}]
\matrix[mimat,inner sep=0pt,text width=2.5em] (a)
{
\null \& \null \\
};
\draw (a-1-1.north west) -- (a-1-1.south west);
\draw (a-1-1.north east) -- (a-1-1.south east);
\draw (a-1-2.north east) -- (a-1-2.south east);
\draw (a-1-1.north west) -- (a-1-2.north east);
\draw (a-1-1.south west) -- (a-1-2.south east);
\draw[decoration={brace,raise=3pt,mirror},decorate]
  (a-1-1.north west) --
    node[xshift=-5pt,anchor=east] {\footnotesize$\dim (V_{1}\oplus V_{2})$}
  (a-1-1.south west);
\draw[decoration={brace,raise=3pt,mirror},decorate]
  (a-1-1.south west) --
    node[xshift=-5pt,yshift=-5pt,anchor=north] {\footnotesize$\dim R_{1}$}
  (a-1-1.south east);
\draw[decoration={brace,raise=3pt,mirror},decorate]
  (a-1-2.south west) --
    node[xshift=5pt,yshift=-5pt,anchor=north] {\footnotesize$\dim R_{2}$}
  (a-1-2.south east);
\end{tikzpicture}
\]
\caption{Block structure of the matrix presentations of the linear relations $\rho=(V_{1},V_{2},R)$ (left) and $\sigma=(V_{1},V_{2},R_{1},R_{2})$ (right).}
\label{fig:matrixlrellreltt}
\end{figure}
\FloatBarrier

\begin{prop}
\label{pro:bijections}
For each $i\in\{1,2,3,4,5,6\}$, the functor $\func{F}^{i}$ induces a bijection between the set of isomorphism classes of indecomposable objects in the corresponding domain category and the subset $\Ind_{i}\carc{F}$ of $\IndF$ composed of all isomorphism classes of indecomposable objects having matrix presentations of the form specified in Table~\ref{tab:subproblemsmatrixform}.
\begin{table}[!ht]
\caption{The bijections induced by the six functors between indecomposable objects of the source category and the corresponding subset of $\Ind\carc{F}$. The rightmost column shows the associated matrix forms.}
\label{tab:subproblemsmatrixform}
\[
\begin{array}{cccc}
\toprule
\text{Functor} & \text{Domain category} & \text{Target subset}  & \text{Matrix form} 
\\
\cmidrule(lr){1-1}
\cmidrule(lr){2-2}
\cmidrule(lr){3-3}
\cmidrule(lr){4-4}
\func{F}^{1}
  & \repSk & \Ind_{1}\carc{F}
  & \tetradmat{i=I_{m},ii=0,iii=0,iv=I_{n},v=A,vi=B,vii=C,viii=D}{}{}
\\
\func{F}^{2}
  & \repDk & \Ind_{2}\carc{F}
  & \tetradmat{i=I_{m},ii=0,iii=0,iv=I_{n},v=A,vi=B,vii=C,viii=I_{n}}{}{}
\\
\func{F}^{3}
  & \repKk & \Ind_{3}\carc{F}
  & \tetradmat{i=I_{m},ii=0,iii=0,iv=I_{n},v=A,vi=I_{n},vii=B,viii=I_{n}}{}{}
\\
\func{F}^{4}
  & \repCk & \Ind_{4}\carc{F}
  & \tetradmat{i=I_{m},ii=0,iii=0,iv=I_{n},v=I_{m},vi=A,vii=B,viii=I_{n}}{}{}
\\
\func{F}^{5}
  & \lrel & \Ind_{5}\carc{F}
  & \tetradmatonerel{i=I_{n},ii=0,iii=0,iv=I_{n},v=I_{n},vi=I_{n},vii={},
    viii=\raisebox{0.8em}[0pt][0pt]{$M$}}{}{}
\\
\func{F}^{6}
  & \lreltt & \Ind_{6}\carc{F}
  & \tetradmattworel{i=I_{m},iv=I_{n},v={},
vi={\raisebox{0.8em}[0pt][0pt]{$M$}},
vii={},
viii={\raisebox{0.8em}[0pt][0pt]{$N$}}%
}{}{}
\\
\bottomrule  
\multicolumn{3}{@{}l}{\text{\footnotesize for some non-negative integers $m$ and $n$.}}
\end{array}
\]
\end{table}
\end{prop}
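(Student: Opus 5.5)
Two facts established earlier do most of the work. First, each $\func{F}^{i}$ is additive and fully faithful (Theorems~\ref{teo:propertiesoffunctors} and~\ref{teo:functorsareadditive}). Second, every domain category and $\repFk$ is Krull--Schmidt: for the quiver categories this is standard, and for $\lrel$ and $\lreltt$ it is Theorem~\ref{teo:lreliskrull} and the theorem following it. By Lemma~\ref{lem:ffpreservesind}, $\func{F}^{i}$ then preserves and reflects indecomposability. By Lemma~\ref{lem:ffpreservesiso}, it preserves and reflects isomorphism.

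Hence $[V]\mapsto[\func{F}^{i}(V)]$ is a well-defined injective map from isomorphism classes of indecomposables of $\mathcal{A}_{i}$ into $\IndF$. Its image consists of the indecomposable classes lying in $\obj_{i}$. It therefore remains to show that, for each $i$, an indecomposable class lies in $\obj_{i}$ if and only if it admits a matrix presentation of the shape in Table~\ref{tab:subproblemsmatrixform}.

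For ``image $\subseteq$ table'', I compute $\func{F}^{i}(V)$ in matrix form directly from the definitions in Section~\ref{sec:functorsdef}. Fix bases of $V_{1},V_{2}$ and take the concatenated basis of $V_{0}=V_{1}\oplus V_{2}$. Then $\iota_{1},\iota_{2}$ give the columns $\twomat{I_{m}}{0}$ and $\twomat{0}{I_{n}}$. The maps $\bar f_{\gamma},\bar f_{\delta}$ give the block columns:
\begin{itemize}
\item $\twomat{A}{B},\twomat{C}{D}$ for $\func{F}^{1}$;
\item $\twomat{A}{B},\twomat{C}{I_{n}}$ for $\func{F}^{2}$;
\item $\twomat{A}{I_{n}},\twomat{B}{I_{n}}$ for $\func{F}^{3}$;
\item $\twomat{I_{m}}{A},\twomat{B}{I_{n}}$ for $\func{F}^{4}$.
\end{itemize}
For $\func{F}^{5}$ we get $\twomat{I_{n}}{I_{n}}$ and, from $j$, a basis matrix $M$ of $R$. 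For $\func{F}^{6}$ we get the basis matrices $M,N$ of $R_{1},R_{2}$ in $V_{1}\oplus V_{2}$. These are precisely the tabulated forms, up to the labelling of the blocks.

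For the converse, suppose an indecomposable representation $U$ has a presentation of the tabulated shape. I read off an object $V$ of $\mathcal{A}_{i}$ from the blocks. For example, $(k^{m},k^{n},A,B)$ for $\func{F}^{3}$. For $\func{F}^{5}$ and $\func{F}^{6}$, the relation is the column space of $M$ (and $N$). Here I note that a matrix presentation of a representation of $\carc{F}$ has columns that may be taken linearly independent, so that $f_{\gamma},f_{\delta}$ are injective. Alternatively, I verify the criteria of Theorem~\ref{teo:structural} directly: in each tabulated form $\eta_{U}$ is the identity, and the required isomorphisms $\zeta,\theta,\epsilon$ are identity blocks. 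Then $U\simeq\func{F}^{i}(V)$, and $V$ is indecomposable by reflection.

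The main subtlety is the meaning of ``having a matrix presentation of the form''. Presentations are only defined up to the admissible transformations of the four-subspace matrix problem, whereas the table demands the first two block columns be exactly $\twomat{I_{m}}{0}$ and $\twomat{0}{I_{n}}$. I would handle this by proving, via Lemma~\ref{lem:etafgamma}, that any $U$ with $\eta_{U}$ an isomorphism is isomorphic to one in this normalized shape. Changing the basis of $V_{0}$ by $\eta_{U}^{-1}$ normalizes $f_{\alpha},f_{\beta}$, and Lemma~\ref{lem:etafgamma}\ref{ite:fgamma} transports $f_{\gamma},f_{\delta}$. Therefore membership in $\obj_{i}$ is equivalent to the existence of a presentation of the stated form, which completes the bijection.
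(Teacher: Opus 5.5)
Your proposal is correct and follows essentially the same route as the paper's proof: full faithfulness together with the Krull--Schmidt property gives preservation and reflection of indecomposability and of isomorphism, hence injectivity; the image lies in the tabulated shapes by direct computation; and surjectivity comes from reading an object of the domain category off the blocks, which is exactly what the paper does for $\func{F}^{1}$. The one claim to tighten is that the columns of $M$ and $N$ ``may be taken linearly independent'': this fails for the simple injectives $I(3)$ and $I(4)$ (an indecomposable with $\Ker f_{\gamma}\neq 0$ or $\Ker f_{\delta}\neq 0$ must be one of these), so for $\func{F}^{5}$ and $\func{F}^{6}$ you should state explicitly that $M$ and $N$ are basis matrices of the relations, as in the paper's matrix presentation of linear relations, which excludes these degenerate cases.
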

\begin{proof}
We will carry out the proof for $\func{F}^{1}$. For the other five cases the proofs are analogous. Consider the restriction of $\func{F}^{1}$ to $\IndS$. Since $\func{F}^{1}$ preserves indecomposability (see Lemma~\ref{lem:ffpreservesind}), the definition of $\func{F}^{1}$ guarantees that the image of $\func{F}^{1}$ is a subset of $\Ind_{1}\carc{F}$. To see that this restriction is injective, let us take two representations $V,W$ in $\IndS$ such that $\func{F}^{1}(V)\simeq\func{F}^{1}(W)$. According to Lemma~\ref{lem:ffpreservesiso}, the functor reflects isomorphism, so $V\simeq W$.

Let us, finally, consider any indecomposable representation in $\Ind_{1}\carc{F}$ , let us say
\[
V=
  \biggl(V_{0},V_{1},V_{2},V_{3},V_{4},
  \twomat{I_{m}}{0},\twomat{0}{I_{n}},
  \twomat{f_{\alpha}}{f_{\beta}},\twomat{f_{\gamma}}{f_{\delta}} 
  \biggr).
\]
From the form of the morphisms, we immediately obtain 
\[
V_{0}=
\Ima\twomat{I_{m}}{0}\oplus \Ima\twomat{0}{I_{n}}\simeq 
V_{1}\oplus V_{2}.
\]
Furthermore, $f_{\alpha}\colon V_{3}\to V_{1}$, $f_{\beta}\colon V_{3}\to V_{2}$, $f_{\gamma}\colon V_{4}\to V_{1}$, and $f_{\delta}\colon V_{4}\to V_{2}$, so 
\[
W=(V_{1},V_{2},V_{3},V_{4},
f_{\alpha},f_{\beta},f_{\gamma},f_{\delta})
\] 
is a well-defined object in $\repSk$, whose indecomposability is guaranteed by Lemma~\ref{lem:ffpreservesind}. Additionally, $\func{F}^{1}(W)\simeq V$ so the restriction is indeed a surjection and our claim is proved. 
\end{proof}
As a consequence of the previous result, we get the solution for the six subproblems considered.
\begin{coro}
For an arbitrary field $k$, we have the following:
\begin{enumerate}
\item\label{teo:indecomquivers}
All indecomposable representations for the quivers $\carc{S}$, $\carc{D}$, $\carc{K}$, and $\carc{C}$ are exhausted, up to isomorphism, by the representations given in matrix form in Figures~\ref{fig:indecomposableS}, \ref{fig:indecomposableD}, \ref{fig:indecomposableK}, and \ref{fig:indecomposableCK}, respectively.

\item\label{teo:linearrelations}
Given $V_{1}$, $V_{2}$, two finite dimensional vector spaces over $k$, all indecomposable linear relations $(V_{1},R)$ on $V_{1}$ are exhausted, up to isomorphism, by the four types of matrix presentations given in Figure~\ref{fig:onelinear}, and all indecomposable pairs of linear relations $(V_{1},V_{2},R_{1},R_{2})$  are exhausted, up to isomorphism, by the seven types of matrix presentations given in Figure~\ref{fig:twolinear}.
\end{enumerate}
\end{coro}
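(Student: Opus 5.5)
The corollary is obtained by reading off Proposition~\ref{pro:bijections} against the known list of $\IndF$. The plan is to fix $i\in\{1,\dots,6\}$ and determine $\Ind_{i}\carc{F}$ explicitly. Then I pull each element back along the bijection induced by $\func{F}^{i}$ to an indecomposable object of the domain category. Its matrix presentation is the one in the corresponding figure (Figures~\ref{fig:indecomposableS}--\ref{fig:twolinear}).

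\textbf{Step 1: listing $\IndF$.} By the classification theorem of Section~\ref{sec:FSPandsubproblems}, every indecomposable representation of $\carc{F}$ is isomorphic either to one of the tetrad representations of Figure~\ref{fig:FSPsolution} or to an injective $I(j)$ from~\eqref{equ:injective}.

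\textbf{Step 2: testing membership in $\mathscr{C}_{i}$.} For each entry of the list I test, using the intrinsic criteria of Theorem~\ref{teo:structural}, whether it lies in $\obj_{i}$.
\begin{enumerate}[label={\normalfont(\roman*)}]
\item First check whether $\eta_{V}$ is an isomorphism. In matrix terms, the columns $A,B$ must form an invertible block matrix, which can be reduced to the identity pattern $\left[\begin{smallmatrix} I_m & 0\\ 0 & I_n\end{smallmatrix}\right]$ of Table~\ref{tab:subproblemsmatrixform}.
\item Then check the additional conditions: injectivity of $f_\gamma$ and/or $f_\delta$, and the existence of the isomorphisms $\epsilon,\zeta,\theta$. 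In matrix terms, after the block-triangular transformations allowed by $\End(V_1\oplus V_2)$ preserving the first two columns, these amount to certain subblocks being square identities.
\end{enumerate}
The injectives $I(j)$ are discarded immediately, since for them $\eta_V$ fails to be an isomorphism or the relevant maps are not injective.

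\textbf{Step 3: reading off the source objects.} For each surviving family, conjugate into the normal form of Table~\ref{tab:subproblemsmatrixform} and read off the blocks: $A,B,C,D$ for $\carc{S}$, the block $M$ for $\lrel$, and so on. These give the matrix presentations in the target category. By Proposition~\ref{pro:bijections} and Lemma~\ref{lem:ffpreservesiso}, distinct surviving isoclasses give non-isomorphic indecomposables, and together they exhaust $\Ind$ of the source category.

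\textbf{Main obstacle.} The hardest step is Step~2 for the infinite series (types with $n\geq$ a bound) and the one-parameter families $F_n(p^s(t))$. Deciding whether the first two columns are jointly invertible requires checking each type's dimension vector and ranks. One must also perform the explicit block reductions $\left[\begin{smallmatrix} A & B\end{smallmatrix}\right]\mapsto I$ that turn the remaining columns into the prescribed shape. I would first use a dimension-vector filter, $d_0=d_1+d_2$ together with $d_3=d_1$, $d_4=d_2$, and so on as required for each $i$. This cuts the list down quickly; a rank check on the first two column blocks then finishes the remaining cases, each a routine computation.
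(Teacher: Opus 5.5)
Your overall route is the paper's: select from the list of $\IndF$ the objects of the form in Table~\ref{tab:subproblemsmatrixform} and pull them back along Proposition~\ref{pro:bijections}. However, two of your steps, as stated, would produce an incomplete classification.

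\textbf{Permutations of the subspaces.} Figure~\ref{fig:FSPsolution} lists the tetrad indecomposables only up to isomorphism \emph{and permutation of the four subspaces}. The classes $\obj_{i}$ are not stable under permuting $\alpha,\beta,\gamma,\delta$: they single out $(\alpha,\beta)$ as the pair for which $\eta_{V}$ must be invertible, and for several $i$ they impose different conditions on $\gamma$ and $\delta$. So Step~2 must be run over every assignment of the four column blocks of each type to $(\alpha,\beta,\gamma,\delta)$, with repetitions removed afterwards, not only over the displayed entries.

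Applied to the displayed entries, your dimension filter already loses a whole family. Type $\mathrm{II}$ has $d_{0}=2n+1$, while its first two column blocks have widths $n+1$ and $n+1$, so $\eta_{V}$ is not even square. Yet sending its first and fourth columns to $\alpha,\beta$ and its second and third to $\gamma,\delta$ makes $\eta_{V}$ the identity. The resulting object is exactly $\func{F}^{1}$ of type $\mathrm{II}$ in Figure~\ref{fig:indecomposableS}.

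\textbf{The injectives.} They cannot all be discarded. For $I(3)$ and $I(4)$ one has $V_{0}=V_{1}=V_{2}=0$, so $\eta_{V}\colon 0\to 0$ is an isomorphism and both lie in $\obj_{1}$. They are $\func{F}^{1}$ of the simple $\carc{S}$-representations at the sources $3$ and $4$; the latter is precisely type $\mathrm{IV}^{\ast}$ with $n=0$ in Figure~\ref{fig:indecomposableS}. Likewise, $I(3)$ satisfies condition (b) of Theorem~\ref{teo:structural} with $\theta,\psi$ the zero maps between zero spaces, and it equals $\func{F}^{2}$ of the simple $\carc{D}$-representation at vertex $3$.

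Your exclusion argument is correct for $I(1)$ and $I(2)$ in all cases, and for $I(3),I(4)$ only when $i\in\{3,4,5,6\}$. For $i=1,2$ these objects must be kept; otherwise the simple representations at the source vertices are missing from your final list.
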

\begin{proof}
For each particular case all we have to do, according to Proposition~\ref{pro:bijections}, is to consider from the list in Figure~\ref{fig:FSPsolution}, those matrix presentations which have the specified form. This immediately leads to the representations listed in our statement.
\end{proof}

\subsection*{Explicit classification of indecomposables}
%\label{sec:indecomposables}
The mechanism underlying the classifications shown in this section can be schematically summarized as follows:
\[
\Ind(\repFk)\longrightarrow
\Ind_{i}\carc{F}\longrightarrow
\Ind(\mathcal{A}_{i}),\qquad
\text{for all $i\in\{1,2,3,4,5,6\}$.}
\]
Here, $\Ind(\repFk)$ is the set of isomorphism classes of indecomposable representations of the quiver $\carc{F}$, whose matrix presentations are listed in Figure~\ref{fig:FSPsolution}. For each $i\in\{1,2,3,4,5,6\}$, the subset $\Ind_{i}\carc{F}$ consists of those indecomposable representations whose matrix presentations have the structural form described in Table~\ref{tab:subproblemsmatrixform}. Proposition~\ref{pro:bijections} shows that the functor $\func{F}^{i}$ induces a bijection between $\Ind(\mathcal{A}_{i})$ and $\Ind_{i}\carc{F}$. The classification of indecomposable objects in $\mathcal{A}_{i}$ is obtained, therefore, by selecting from Figure~\ref{fig:FSPsolution} those matrix presentations belonging to $\Ind_{i}\carc{F}$.
\begin{figure}[H]
\centering%\footnotesize
\renewcommand\arraystretch{1.2}
\begin{tabular}{%
  |>{\centering\arraybackslash}p{%
    \dimexpr0.5\textwidth-2\tabcolsep-1.5\arrayrulewidth\relax}|
  >{\centering\arraybackslash}p{%
    \dimexpr0.5\textwidth-2\tabcolsep-1.5\arrayrulewidth\relax}|
}
\multicolumn{2}{c}{\normalfont Regular:} \\
\hline
\multicolumn{2}{|>{\centering\arraybackslash}p{%
    \dimexpr\textwidth-2\tabcolsep-3\arrayrulewidth\relax\relax}|}{%
    \rlap{$\mathrm{0}=\mathrm{0}^{\ast}$}%\par
  \tetradmatinfo{
    i=I_{n},iv=I_{n},v=I_{n},
    vi=I_{n},vii={|[text width=4.5em]|F_{n}(p^{s}(t))},
    viii={|[text width=4.5em]|I_{n}},matname=A}%
    {remember picture}{}%
    {1}{0}{(n,n,n,n)}
  \begin{tikzpicture}[remember picture,overlay]
    \node[anchor=west,overlay,outer sep=0pt,inner sep=0pt] at (A.east)
    {, $p(t)\neq t$};
  \end{tikzpicture}%
}%
\\
\hline
\rlap{$\mathrm{I}=\mathrm{I}^{\ast}$}\par
\tetradmatinfo{i=I_{n},iii=I_{n},iv=I_{n},
  vi=I_{n},vii={|[text width=2.5em]|J_{n}^{+}(0)},
  viii={|[text width=2.5em]|I_{n}}}{}{}{1}{0}{(n,n,n,n)}
&
\rlap{$\mathrm{II}=\mathrm{II}^{\ast}$}\par
\tetradmatinfo{i=I_{n+1},iii=I_{n+1},iv=\ile,
  v=\ido,vi=I_{n},vii=0,viii=I_{n}}%
  {}{}{0}{1}{(n+1,n+1,n,n)}
\\
\hline
\end{tabular}%
\par\medskip
\begin{tabular}{%
  |>{\centering\arraybackslash}p{%
    \dimexpr0.5\textwidth-2\tabcolsep-1.5\arrayrulewidth\relax}|
  >{\centering\arraybackslash}p{%
    \dimexpr0.5\textwidth-2\tabcolsep-1.5\arrayrulewidth\relax}|
}
\multicolumn{2}{c}{\normalfont Non-regular:} 
\\
\hline
\rlap{$\mathrm{III}$}\par
\tetradmatinfo{i=I_{n+1},iv=I_{n},v=\iup,
  vi=I_{n},vii=\ido,viii=I_{n}}%
  {}{}{0}{1}{(n+1,n,n,n)}
&
\rlap{$\mathrm{III}^{\ast}$}\par
\tetradmatinfo{i=I_{n},iv=I_{n+1},v=\ile,
  vi=I_{n+1},vii=\iri,viii=I_{n+1}}%
  {}{}{0}{1}{(n,n+1,n+1,n+1)}
\\
\hline
\rlap{$\mathrm{IV}$}\par
\tetradmatinfo{i=I_{n+1},iv=I_{n+1},v=I_{n+1},
  vi=I_{n+1},vii=\iup,viii=\ido}%
  {}{}{0}{1}{(n+1,n+1,n+1,n)}
&
\rlap{$\mathrm{IV}^{\ast}$}\par
\tetradmatinfo{i=I_{n+1},iv=I_{n+1},v=I_{n+1},
  vi=I_{n+1},vii=\ile[n+1],viii=\iri[n+1]}%
  {}{}{0}{1}{(n+1,n+1,n+1,n+2)}
\\
\hline
\rlap{$\mathrm{V}$}\par
\tetradmatinfov{i=I_{n},iii=\lrz,v=I_{n},vi=\lrz,
  vii={J^{+}_{n}(0)},viii=I_{n},ix=\lroz,
  x=I_{n},xi={J^{+}_{n}(0)},xii=\lroz}%
  {}{text width=2.7em}{0}{1}{(n+1,n+1,n+1,n)}
&
\rlap{$\mathrm{V}^{\ast}$}\par
\tetradmatinfov{i=\ile,iii=\lroz,iv=\ile,v=\iri,vi=\lroz,
  vii=\iri,viii=\ile,ix=\lroz,
  xi=\ile,xii=\lroz}%
  {}{text width=2.7em}{0}{1}{(n+1,n+1,n+1,n+1)}
\\
\hline
\end{tabular}%
\caption{Classification of all indecomposable representations of a tetrad, up to isomorphism and permutation of the subspaces.}
\label{fig:FSPsolution}%
\end{figure}

\begin{figure}
\[
\renewcommand\arraystretch{1.2}
\begin{tabular}{%
  |>{\centering\arraybackslash}p{%
    \dimexpr0.4\textwidth-2\tabcolsep-1.5\arrayrulewidth\relax}|
  >{\centering\arraybackslash}p{%
    \dimexpr0.4\textwidth-2\tabcolsep-1.5\arrayrulewidth\relax}|
}
\multicolumn{2}{c}{\normalfont Regular:} \\
\hline
\multicolumn{2}{|>{\centering\arraybackslash}p{%
    \dimexpr0.8\textwidth\relax}|}{\rlap{$\mathrm{0}=\mathrm{0}^{\ast}$}%\par
        \tttmatrixsimple[remember picture,baseline=(current bounding box.center)]{I_{n}}{|[text width=4.5em]|F_{n}(p^{s}(t))}{I_{n}}{|[text width=4.5em]|I_{n}}{1}{0}{(n,n,n,n)}}%
\begin{tikzpicture}[remember picture,overlay]
\node[align=center,anchor=west,outer sep=0pt,inner sep=0pt] at ([xshift=3pt]mim.east) 
{, $p(t)\neq t$};
\end{tikzpicture}% 
\\
\hline
\rlap{$\mathrm{I}=\mathrm{I}^{\ast}$}\par
\tttmatrixsimple[baseline=(current bounding box.center)]{I_{n}}{J_{n}^{+}(0)}{I_{n}}{I_{n}}{1}{0}{(n,n,n,n)}
&
\rlap{$\mathrm{II}=\mathrm{II}^{\ast}$}\par
\tttmatrixsimple[baseline=(current bounding box.center)]{I_{n+1}}{\ido}{\ile}{I_{n}}{0}{1}{(n+1,n,n+1,n)}
\\
\hline
\end{tabular}
\]
\[
\renewcommand\arraystretch{1.2}
\begin{tabular}{%
  |>{\centering\arraybackslash}p{%
    \dimexpr0.4\textwidth-\tabcolsep-1.5\arrayrulewidth\relax}|
  >{\centering\arraybackslash}p{%
    \dimexpr0.4\textwidth-\tabcolsep-1.5\arrayrulewidth\relax}|
}
\multicolumn{2}{c}{\normalfont Non-regular:} \\
\hline
\rlap{$\mathrm{III}$}\par
\tttmatrixsimple[baseline=(current bounding box.center)]{\iup}{\ido}{I_{n}}{I_{n}}{0}{1}{(n+1,n,n,n)}
&
\rlap{$\mathrm{III}^{\ast}$}\par
\tttmatrixsimple[baseline=(current bounding box.center)]{\ile}{\iri}{I_{n+1}}{I_{n+1}}{0}{1}{(n,n+1,n+1,n+1)} 
\\
\hline
\rlap{$\mathrm{IV}$}\par
\tttmatrixsimple[baseline=(current bounding box.center)]{I_{n+1}}{\iup}{I_{n+1}}{\ido}{0}{1}{(n+1,n+1,n+1,n)}
&
\rlap{$\mathrm{IV}^{\ast}$}\par
\tttmatrixsimple[baseline=(current bounding box.center)]{I_{n}}{\ile}{I_{n}}{\iri}{0}{1}{(n,n,n,n+1)}
\\
\hline
\end{tabular}
\]
\caption{Indecomposable representations of the quiver $\carc{S}$ of type $\widetilde{A}_{3}$.}
\label{fig:indecomposableS}
\end{figure}

\begin{figure}
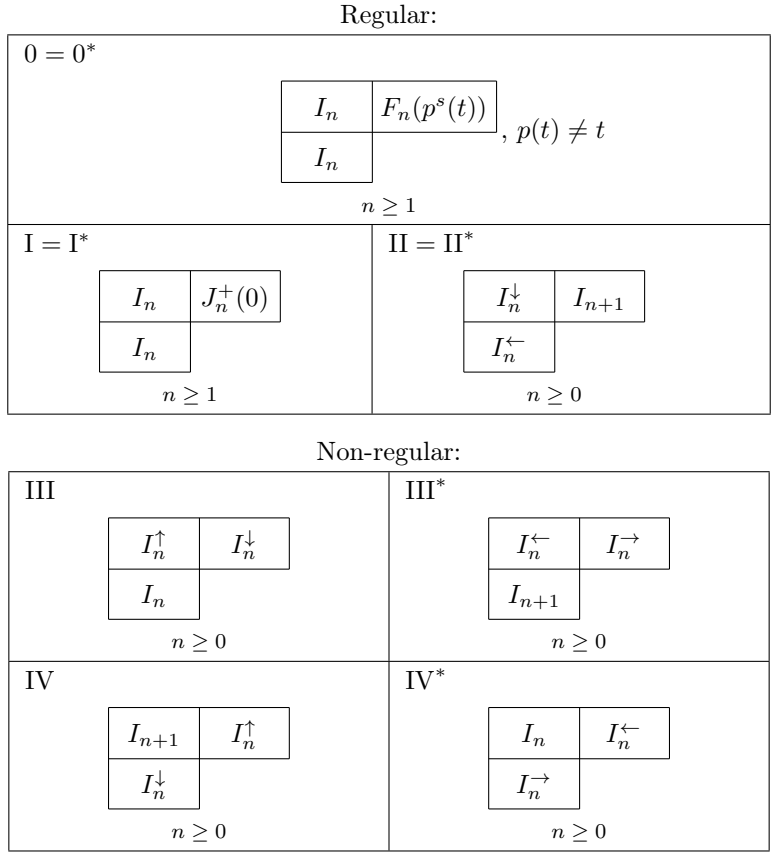

\[
\renewcommand\arraystretch{1.2}
\begin{tabular}{%
  |>{\centering\arraybackslash}p{%
    \dimexpr0.4\textwidth-2\tabcolsep-1.5\arrayrulewidth\relax}|
  >{\centering\arraybackslash}p{%
    \dimexpr0.4\textwidth-2\tabcolsep-1.5\arrayrulewidth\relax}|
}
\multicolumn{2}{c}{\normalfont Regular:} \\
\hline
\multicolumn{2}{|>{\centering\arraybackslash}p{%
    \dimexpr0.8\textwidth\relax}|}{\rlap{$\mathrm{0}=\mathrm{0}^{\ast}$}%\par
    \medskip 
  \tripmatrix[remember picture,baseline=(current bounding box.center)]{I_{n}}{|[text width=4.7em]|F_{n}(p^{s}(t))}{I_{n}}{1}{T1}{0}{(n,n,n)}%
  \tikz[remember picture,overlay]{%
    \node[anchor=west,outer sep=0pt,inner sep=0pt] at (T1.east)
  {\,, $p(t)\neq t$};
  }%
}   
\\
\hline
\rlap{$\mathrm{I}=\mathrm{I}^{\ast}$}\par\medskip 
  \tripmatrix[remember picture,baseline=(current bounding box.center)]{I_{n}}{J ^{+}_{n}(0)}{I_{n}}{1}{T2}{0}{(n,n,n)}%
&
  \rlap{$\mathrm{II}=\mathrm{II}^{\ast}$}\par\medskip 
  \tripmatrix[remember picture,baseline=(current bounding box.center)]{\ido}{I_{n+1}}{\ile}{0}{T3}{1}{(n+1,n,n+1)}
\\
\hline
\end{tabular}
\]
\[
\renewcommand\arraystretch{1.2}
\begin{tabular}{%
  |>{\centering\arraybackslash}p{%
    \dimexpr0.4\textwidth-\tabcolsep-1.5\arrayrulewidth\relax}|
  >{\centering\arraybackslash}p{%
    \dimexpr0.4\textwidth-\tabcolsep-1.5\arrayrulewidth\relax}|
}
\multicolumn{2}{c}{\normalfont Non-regular:} \\
\hline
\rlap{$\mathrm{III}$}\par\medskip 
  \tripmatrix[remember picture,baseline=(current bounding box.center)]{\iup}{\ido}{I_{n}}{0}{T4}{1}{(n+1,n,n)}%
&
  \rlap{$\mathrm{III}^{\ast}$}\par\medskip 
  \tripmatrix[remember picture,baseline=(current bounding box.center)]{\ile}{\iri}{I_{n+1}}{0}{T5}{1}{(n.n+1,n+1)}%
\\
\hline
  \rlap{$\mathrm{IV}$}\par\medskip 
  \tripmatrix[remember picture,baseline=(current bounding box.center)]{I_{n+1}}{\iup}{\ido}{0}{T6}{1}{(n+1,n+1,n)}%
&
  \rlap{$\mathrm{IV}^{\ast}$}\par\medskip 
  \tripmatrix[remember picture,baseline=(current bounding box.center)]{I_{n}}{\ile}{\iri}{0}{T7}{1}{(n,n,n+1)}%
\\
\hline
\end{tabular}
\]
\caption{Indecomposable representations of the quiver $\carc{D}$ of type $\widetilde{A}_{2}$.}
\label{fig:indecomposableD}
\end{figure}

\begin{figure}
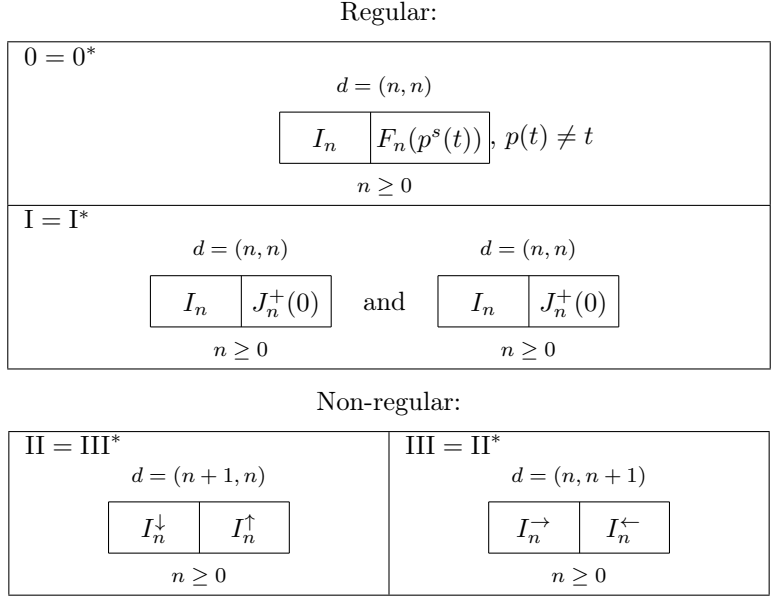

\[
\begin{tabular}{|>{\centering\arraybackslash}p{.8\textwidth}|}
\multicolumn{1}{c}{\normalfont Regular:} \\[1ex]
\hline
\rlap{$\mathrm{0}=\mathrm{0}^{\ast}$}%\par 
  \hormatrixl[remember picture,baseline=(current bounding box.center)]{I_{n}}{|[text width=4.5em]|F_{n}(p^{s}(t))}{0}{0}{(n,n)}{K1}
  \tikz[remember picture,overlay]{
  \node[anchor=west,outer sep=0pt,inner sep=0pt] at (K1.east)
  {, $p(t)\neq t$};
  }
\\
\hline
\rlap{$\mathrm{I}=\mathrm{I}^{\ast}$}\par 
  \hormatrixl[remember picture,baseline=(current bounding box.center)]{I_{n}}{J^{+}_{n}(0)}{0}{0}{(n,n)}{K2a}% 
\qquad%
\qquad% 
\hormatrixl[remember picture,baseline=(current bounding box.center)]{I_{n}}{J^{+}_{n}(0)}{0}{0}{(n,n)}{K2b}
  \tikz[remember picture,overlay]{
  \node at ( $ (K2a.east)!0.5!(K2b.west) $){and};
  }
\\
\hline  
\end{tabular}
\]
\[
\begin{tabular}{|>{\centering\arraybackslash}%
  p{\dimexpr0.4\textwidth-2\fboxsep-1.5\fboxrule\relax}|
>{\centering\arraybackslash}%
  p{\dimexpr0.4\textwidth-2\fboxsep-1.5\fboxrule\relax}|
}
\multicolumn{2}{c}{\normalfont Non-regular:} \\[1ex]
\hline
\rlap{$\mathrm{II}=\mathrm{III}^{\ast}$}\par
  \hormatrixl[remember picture,baseline=(current bounding box.center)]{\ido}{\iup}{0}{1}{(n+1,n)}{K3}
  &
\rlap{$\mathrm{III}=\mathrm{II}^{\ast}$}
  \hormatrixl[remember picture,baseline=(current bounding box.center)]{\iri}{\ile}{0}{1}{(n,n+1)}{K4} \\
\hline   
\end{tabular}
\]
\caption{Indecomposable representations of the Kronecker quiver $\carc{K}$.}
\label{fig:indecomposableK}
\end{figure}

\begin{figure}
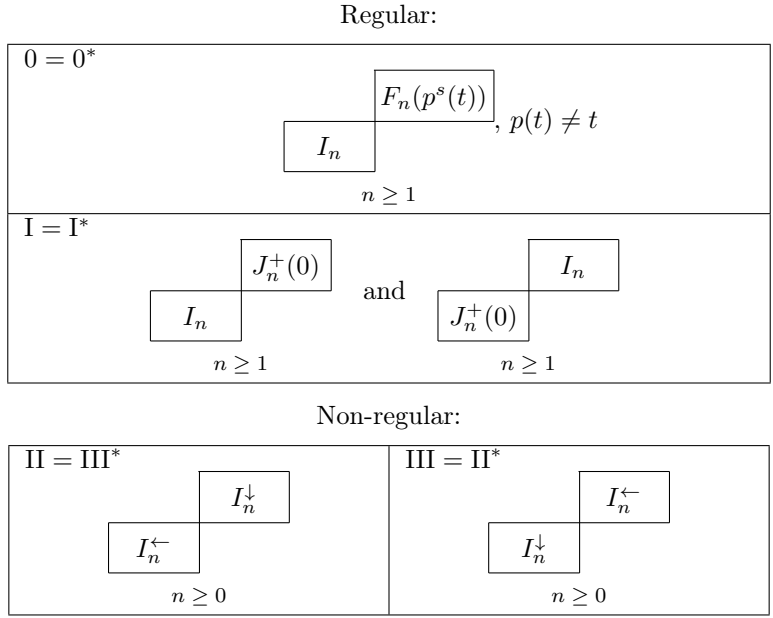

\[
\begin{tabular}{|>{\centering\arraybackslash}p{0.8\textwidth}|}
\multicolumn{1}{c}{\normalfont Regular:} \\[1ex]
\hline
\rlap{$\mathrm{0}=\mathrm{0}^{\ast}$}%\par 
  \diamatrix[remember picture,baseline=(current bounding box.center)]{|[text width=4.5em]|F_{n}(p^{s}(t))}{I_{n}}{1}{0}{(n,n)}{CK1}%
\tikz[remember picture,overlay]{%
  \node[anchor=west,outer sep=0pt,inner sep=0pt] at (CK1.east)
  {, $p(t)\neq t$};
}%
\\
\hline
\rlap{$\mathrm{I}=\mathrm{I}^{\ast}$}\par 
  \diamatrix[remember picture,baseline=(current bounding box.center)]{J^{+}_{n}(0)}{I_{n}}{1}{0}{(n,n)}{CK2a}% 
  \qquad\qquad% 
  \diamatrix[remember picture,baseline=(current bounding box.center)]{I_{n}}{J^{+}_{n}(0)}{1}{0}{(n,n)}{CK2b}
\tikz[remember picture,overlay]{\node at ( $ (CK2a.east)!0.5!(CK2b.west) $ )
  {and};
}%
\\
\hline
\end{tabular}
\]
\[
\begin{tabular}{|>{\centering\arraybackslash}%
  p{\dimexpr0.4\textwidth-2\fboxsep-1.5\fboxrule\relax}|
>{\centering\arraybackslash}%
  p{\dimexpr0.4\textwidth-2\fboxsep-1.5\fboxrule\relax}|
}
\multicolumn{2}{c}{\normalfont Non-regular:} \\[1ex]
\hline
\rlap{$\mathrm{II}=\mathrm{III}^{\ast}$}\par 
  \diamatrix[remember picture,baseline=(current bounding box.center)]{\ido}{\ile}{0}{1}{(n,n+1)}{CK3}
  &
  \rlap{$\mathrm{III}=\mathrm{II}^{\ast}$}
  \diamatrix[remember picture,baseline=(current bounding box.center)]{\ile}{\ido}{0}{1}{(n+1,n)}{CK4}
\\
\hline
\end{tabular}
\]
\caption{Indecomposable representations of the contragredient Kronecker quiver $\carc{C}$.}
\label{fig:indecomposableCK}
\end{figure}

\begin{figure}
\[
\renewcommand\arraystretch{1.2}
\begin{tabular}{%
  |>{\centering\arraybackslash}p{%
    \dimexpr0.4\textwidth-\tabcolsep-1.5\arrayrulewidth\relax}|
  >{\centering\arraybackslash}p{%
    \dimexpr0.4\textwidth-\tabcolsep-1.5\arrayrulewidth\relax}|
}
\multicolumn{2}{c}{\normalfont Regular:} \\
\hline
\rlap{$\mathrm{0}=\mathrm{0}^{\ast}$}\par\medskip 
  \vermatrix[remember picture,baseline=(current bounding box.center)]{|[text width=4.5em]|F_{n}(p^{s}(t))}{|[text width=4.5em]|I_{n}}{1}{LR1}    \tikz[remember picture,overlay]{%
  \node[anchor=west,overlay,outer sep=0pt,inner sep=0pt] at (LR1.east)
    {, $p(t)\neq t$};
    }%
&
  \rlap{$\mathrm{I}=\mathrm{I}^{\ast}$}\par\medskip 
  \vermatrix[baseline=(current bounding box.center)]{J^{+}_{n}(0)}{I_{n}}{1}{LR2}
\\
\hline
\end{tabular}
\]
\[
\renewcommand\arraystretch{1.2}
\begin{tabular}{%
  |>{\centering\arraybackslash}p{%
    \dimexpr0.4\textwidth-\tabcolsep-1.5\arrayrulewidth\relax}|
  >{\centering\arraybackslash}p{%
    \dimexpr0.4\textwidth-\tabcolsep-1.5\arrayrulewidth\relax}|
}
\multicolumn{2}{c}{\normalfont Non-regular:} \\
\hline

\rlap{$\mathrm{II}=\mathrm{III}^{\ast}$}\par\medskip 
  \vermatrix[baseline=(current bounding box.center)]{\iup}{\ido}{0}{LR3}%
&
  \rlap{$\mathrm{II}=\mathrm{III}^{\ast}$}\par\medskip 
  \vermatrix[baseline=(current bounding box.center)]{\ile}{\iri}{0}{LR4} 
\\
\hline
\end{tabular}
\]
\caption{Indecomposable representations for one linear relation.}
\label{fig:onelinear}
\end{figure}

\begin{figure}
\[
\renewcommand\arraystretch{1.2}
\begin{tabular}{%
  |>{\centering\arraybackslash}p{%
    \dimexpr0.4\textwidth-2\tabcolsep-1.5\arrayrulewidth\relax}|
  >{\centering\arraybackslash}p{%
    \dimexpr0.4\textwidth-2\tabcolsep-1.5\arrayrulewidth\relax}|
}
\multicolumn{2}{c}{\normalfont Regular:} \\
\hline
\multicolumn{2}{|>{\centering\arraybackslash}p{%
    \dimexpr0.8\textwidth\relax}|}{\rlap{$\mathrm{0}=\mathrm{0}^{\ast}$}%\par
    \medskip 
  \tvermatrix[remember picture,baseline=(current bounding box.center)]{I_{n}}{|[text width=4.5em]|F_{n}(p^{s}(t))}{I_{n}}{|[text width=4.5em]|I_{n}}{1}{2LR1}%
  \tikz[remember picture,overlay]{
    \node[anchor=west,outer sep=0pt,inner sep=0pt] at (2LR1.east)
  {\,, $p(t)\neq t$};}%
}   
\\
\hline
\rlap{$\mathrm{I}=\mathrm{I}^{\ast}$}\par\medskip 
  \tvermatrix[remember picture,baseline=(current bounding box.center)]{I_{n}}{J ^{+}_{n}(0)}{I_{n}}{I_{n}}{1}{2LR2}%
&
  \rlap{$\mathrm{II}=\mathrm{II}^{\ast}$}\par\medskip 
  \tvermatrix[remember picture,baseline=(current bounding box.center)]{I_{n+1}}{\ido}{\ile}{I_{n}}{0}{2LR3}
\\
\hline
\end{tabular}
\]
\[
\renewcommand\arraystretch{1.2}
\begin{tabular}{%
  |>{\centering\arraybackslash}p{%
    \dimexpr0.4\textwidth-\tabcolsep-1.5\arrayrulewidth\relax}|
  >{\centering\arraybackslash}p{%
    \dimexpr0.4\textwidth-\tabcolsep-1.5\arrayrulewidth\relax}|
}
\multicolumn{2}{c}{\normalfont Non-regular:} \\
\hline
\rlap{$\mathrm{III}$}\par\medskip 
  \tvermatrix[remember picture,baseline=(current bounding box.center)]{\iup}{\ido}{I_{n}}{I_{n}}{0}{2LR4}%
&
  \rlap{$\mathrm{III}^{\ast}$}\par\medskip 
  \tvermatrix[remember picture,baseline=(current bounding box.center)]{\ile}{\iri}{I_{n+1}}{I_{n+1}}{0}{2LR5}%
\\
\hline
  \rlap{$\mathrm{IV}$}\par\medskip 
  \tvermatrix[remember picture,baseline=(current bounding box.center)]{I_{n+1}}{\iup}{I_{n+1}}{\ido}{0}{2LR6}%
&
  \rlap{$\mathrm{IV}^{\ast}$}\par\medskip 
  \tvermatrix[remember picture,baseline=(current bounding box.center)]{I_{n}}{\ile}{I_{n}}{\iri}{0}{2LR7}%
\\
\hline
\end{tabular}
\]
\caption{Indecomposable representations for two linear relations.}
\label{fig:twolinear}
\end{figure}

\FloatBarrier

\printbibliography

\end{document}